\documentclass[11pt,reqno,a4paper]{amsart}

\usepackage[T1]{fontenc}
\usepackage[utf8]{inputenc}
\usepackage[english]{babel}

\usepackage{amsmath}
\usepackage{amssymb}
\usepackage{amsfonts}

\usepackage{algorithm}
\usepackage{algpseudocode}

\usepackage{array}
\usepackage{multirow}
\usepackage{graphicx}
\usepackage{longtable}
\usepackage{lscape}
\usepackage{hhline}
\usepackage{adjustbox}
\usepackage{caption}
\usepackage{colortbl}
\usepackage[table]{xcolor}

\usepackage{tikz}
\usetikzlibrary{matrix,shapes,arrows,positioning,chains}

\usepackage{url}
\usepackage{comment}
\usepackage{textcomp}
\usepackage{chngcntr}
\usepackage{csquotes}
\usepackage{blindtext}
\usepackage{lmodern}

\theoremstyle{plain}
\newtheorem{thm}{Theorem}
\newtheorem{lem}{Lemma}
\newtheorem{cor}{Corollary}
\newtheorem{prop}{Proposition}

\theoremstyle{definition}
\newtheorem{ass}{Assumption}

\theoremstyle{remark}
\newtheorem{rem}{Remark}

\def\bepf{\begin{proof}}
\def\epf{\end{proof}}

\def\Box{{\hbox{\raisebox{0.0em}{\rlap{$\sqcap$}}\kern0em%
            \raisebox{-0.0em}{$\sqcup$}}} }

\makeatletter
\newcounter{subsubsubsection}[subsubsection]
\def\subsubsubsectionmark#1{}

\def\subsubsubsection{\@startsection
  {subsubsubsection}{4}{\z@}
  {-3.25ex plus -1ex minus -.2ex}
  {1.5ex plus .2ex}
  {\normalsize\bfseries}}
\def\l@subsubsubsection{\@dottedtocline{4}{4.8em}{4.2em}}
\makeatother

\newcolumntype{?}{!{\vrule width 1pt}}
\def\bfi#1{{\bf #1}}

\def\eeq{\end{equation}}
\def\lbeq#1{\begin{equation}\label{#1}}
\def\bary{\begin{array}}
\def\eary{\end{array}}

\def\D{\displaystyle}

\def\fct#1{\mathop{\rm #1}}

\def\argmin{\fct{argmin}}

\def\proj{\fct{proj}}

\newcommand{\BreuSS}{Breu\ss}

\begin{document}


\title[Reservoir ZCW Projected Subspace Search]%
{Reservoir Zero-Coordinatewise Projected Subspace Search for Minimization Over Sparse Symmetric Sets in Machine Learning}

\author{Morteza Kimiaei}
\address{Fakult\"at f\"ur Mathematik, Universit\"at Wien,
Oskar-Morgenstern-Platz 1, A-1090 Wien, Austria}
\email{morteza.kimiaei@univie.ac.at}

\author{Shima Shabani}
\address{Institute for Mathematics, Brandenburg University of Technology,
Platz der Deutschen Einheit 1, 03046 Cottbus, Germany}
\email{shima.shabani@b-tu.de}

\author{Michael \BreuSS}
\address{Institute for Mathematics, Brandenburg University of Technology,
Platz der Deutschen Einheit 1, 03046 Cottbus, Germany}
\email{breuss@b-tu.de}

\thanks{M. Kimiaei acknowledges financial support of the Austrian Science Foundation under
\url{https://doi.org/10.55776/PAT2747625}.}
\thanks{S. Shabani and M. Breu\ss\ acknowledge funding from the Federal Ministry of
Research, Technology and Space (BMFTR), Germany, through the project
``Digital GreenTech (DGT) -- Environmental Technology Meets Robotics''
(grant number 68542).}

\subjclass[2020]{Primary 90C30; Secondary 90C06, 65K05, 90C27, 90C59}

\keywords{Cardinality-constrained optimization, randomized optimization,
subspace technique, global convergence, convergence rate}

\begin{abstract}
\begin{sloppypar}
We study a class of nonconvex cardinality-constrained optimization problems
arising in sparse learning. These problems are NP-hard due to the combinatorial
nature of sparsity constraints. We introduce a Reservoir Zero-Coordinatewise
Projected Subspace Search ({\tt RZCW-PSS}) algorithm, a simplex-style method on
sparse manifolds that integrates coordinatewise search, symmetry-aware swap-based
support updates, randomized low-dimensional subspace exploration, and
zero-coordinatewise reservoir injection. The proposed method augments classical
coordinate and swap moves with sparse-compatible subspace searches constructed
from a dynamically maintained reservoir of previously accepted feasible points. A
key feature of the approach is a refined reservoir initialization strategy that
embeds sparse projection directly into a uniform sampling procedure, preserving
geometric diversity within the feasible set. The algorithm also includes an
optional support-identification safeguard that enforces full-support
stabilization under a fixed support-change decrease threshold. We establish that,
under the stated regularity, sampling, and subproblem-accuracy assumptions, every
full-support accumulation point of the {\tt RZCW-PSS} iterates is Beck--Hallak
zero-coordinatewise stationary almost surely; with the safeguard and full-support
initialization, this conclusion applies to all accumulation points. We further
prove a conditional local linear convergence rate after support stabilization and
derive the corresponding logarithmic local iteration complexity. Numerical
experiments on synthetic sparse learning problems demonstrate that
{\tt RZCW-PSS} improves robustness and solution quality while remaining
computationally competitive with Partial Simplex Search, Basic Feasible Search,
and Zero-Coordinatewise Search methods.
\end{sloppypar}
\end{abstract}

\maketitle

\begin{sloppypar}
\section{Introduction}

Sparsity-constrained optimization problems arise ubiquitously in modern data analysis, signal processing, machine learning, and statistics, where one seeks high-quality solutions that depend on only a small subset of decision variables. A widely studied modeling paradigm for enforcing sparsity is the incorporation of an explicit cardinality constraint, which limits the number of nonzero components of the decision vector. While such formulations offer strong interpretability and modeling fidelity, they also introduce fundamental algorithmic challenges due to the resulting nonconvexity and combinatorial structure of the feasible set. This paper focuses on the design and analysis of an efficient algorithm for solving smooth optimization problems under explicit cardinality constraints, with a particular emphasis on stationarity concepts and low-dimensional search mechanisms that strike a balance between computational tractability and solution quality.

\subsection{Problem Formulation}

We consider the \textbf{cardinality-constrained optimization problem} (CCOP)
\begin{equation}\label{eq:prodef}
	\min_{x \in C \cap C_s} f(x),
	\qquad
	C_s := \{x \in \mathbb{R}^n : \|x\|_0 \le s\},
\end{equation}
where $f:\mathbb{R}^n \to \mathbb{R}$ is continuously differentiable. Let $g(x) := \nabla f(x)$ be the gradient of $f$ at 
$x$. In this work, we propose the Reservoir Zero-Coordinatewise Projected Subspace Search
({\tt RZCW-PSS}) method for the CCOP \eqref{eq:prodef}..

\bfi{Assumptions on CCOP.}
The feasible set $C \subseteq \mathbb{R}^n$ is assumed to be closed, convex, and
symmetric in the sense of \cite{Beck2016}, as detailed in
Subsection~\ref{sec:SymmSet}. Typical examples of such symmetric sets include
$\mathbb{R}^n$, the nonnegative orthant, the unit simplex, $\ell_p$-norm
balls, and centered box-constrained sets; see \cite[Section~2]{suppMat} for details. These feasible regions are invariant
under permutations of the coordinate indices and, in certain cases, also under
componentwise sign changes. 

This symmetry plays a fundamental role in the characterization and efficient
computation of sparse projections \cite{Beck2016}, as discussed further in
Section~3 of \cite{suppMat}. In particular, it ensures that sparse projection
operators and coordinate permutations preserve feasibility, a property that is
exploited by the coordinatewise and support-exchange steps of the proposed
algorithm. A related convexification perspective is developed in
\cite{KimTawarmalaniRichard2022}, where permutation- and sign-invariant sets are
studied through majorization-based convex-hull descriptions, with applications
to sparse PCA and cardinality-constrained norm balls. In contrast, our use of
symmetry is algorithmic rather than relaxation-based: we exploit the symmetric
structure of \(C\) to characterize sparse projections, support exchanges, and
ZCW super-support subproblems within a feasible-search algorithm.

The sparsity set \(C_s\) enforces at most \(s\) nonzero components, rendering the
feasible region nonconvex and combinatorial. Consequently, problem
\eqref{eq:prodef} is NP-hard in general. We assume throughout that
\(g=\nabla f\) is Lipschitz continuous on bounded subsets of \(C\).

\bfi{Support sets.}
For any \(x \in \mathbb{R}^n\), we define the active and inactive index sets,
respectively, by
\begin{equation}\label{eq:supdef}
	I_1(x)=\operatorname{supp}(x) := \{i \in [n] : x_i \neq 0\},
	\qquad
	I_0(x) := \{i \in [n] : x_i = 0\},  
\end{equation}
where
\(\operatorname{supp}(\cdot)\) denotes the support operator, and
\([n]:=\{1,\dots,n\}\).

For a feasible point \(x\in C\cap C_s\), we say that \(x\) has saturated
support, or full sparse support, if \(\|x\|_0=s\), and unsaturated support if
\(\|x\|_0<s\). A super support of \(x\) is any index set
\(\mathcal L\subseteq[n]\) with \(|\mathcal L|=s\) such that $I_1(x)\subseteq\mathcal L$. Super supports play a central role in characterizing admissible support
modifications under the cardinality constraint.

\subsection{Related Works}

We organize the related literature into two main categories. We first review stationarity-based methods for cardinality-constrained optimization, and then discuss subspace methods that exploit low-dimensional search spaces.

\subsubsection{Stationarity-Based Methods for CCOP}

In the sparse optimization setting, classical first-order stationarity with respect
to all feasible directions is typically too strong to be enforced efficiently.
As a result, different algorithms aim to satisfy \textbf{restricted} first-order
optimality conditions of varying strength, ranging from projection-based fixed-point
conditions to coordinatewise and support-aware notions of stationarity. We briefly
review several methods proposed in \cite{Beck2013,Beck2016}, emphasizing their
algorithmic ideas and the specific stationarity concepts they enforce.

\bfi{Basic Feasible Search ({\tt BFS}).}
{\tt BFS} is based on the concept of \textbf{basic feasibility}, which can be viewed
as a restricted form of first-order stationarity adapted to cardinality constraints.
A point is said to be basic feasible if the gradient vanishes on its active support
(and on all coordinates when the support is not full). In contrast to classical
stationarity, basic feasibility does not require optimality with respect to all directions or support modifications; it only enforces first-order
conditions on a fixed support. The {\tt BFS} algorithm repeatedly constructs candidate
super supports of cardinality $s$ and solves smooth subproblems restricted to these
supports. Under mild assumptions, finite termination can be guaranteed. {\tt BFS}
serves as a fundamental building block for more advanced sparse optimization methods.

\bfi{Iterative Hard Thresholding ({\tt IHT}).}
{\tt IHT} is a first-order projection method for solving \eqref{eq:prodef}. Each
iteration consists of a gradient descent step followed by a hard thresholding
operator that retains the $s$ largest components in magnitude. From a stationarity
perspective, {\tt IHT} seeks fixed points of the projected gradient mapping onto the
nonconvex sparse set $C_s$. Under Lipschitz continuity of the gradient $g$, accumulation
points of {\tt IHT} satisfy an \textbf{$L$-stationarity} condition (see \cite[Section~4]{suppMat}), expressed as a
projection-based fixed-point relation. This condition represents a weak form of
first-order optimality, as it does not exclude descent along individual coordinates
or support-changing directions.

\bfi{Partial Sparse Simplex ({\tt PSS}).}
{\tt PSS} is a coordinate-descent-type algorithm designed to improve upon {\tt IHT}
by enforcing a stronger notion of stationarity. At each iteration, it performs exact
one-dimensional minimizations along coordinate directions while preserving sparsity.
By allowing controlled single-index support exchanges, {\tt PSS} explores a richer
set of feasible descent directions than {\tt IHT}. As a result, {\tt PSS} targets
\textbf{partial coordinatewise stationarity}, which requires optimality with respect to all
one-dimensional perturbations along active coordinates as well as all single-index
support swaps. This notion strictly strengthens $L$-stationarity by ruling out
coordinatewise and swap-based descent directions.

\bfi{Zero-Coordinatewise Search ({\tt ZCWS}).}
{\tt ZCWS}, also referred to as zero-coordinatewise search, enforces a stronger
stationarity concept that builds upon basic feasibility. Starting from a basic
feasible point, it systematically replaces poorly performing active coordinates with
more promising inactive ones based on gradient information. Each support update is
followed by a restricted minimization and a {\tt BFS} step to restore feasibility.
The resulting iterates satisfy \textbf{zero-coordinatewise (ZCW) stationarity} (see below), which
excludes descent along any coordinate direction, including after full
reoptimization on modified supports. This is the strongest stationarity notion among
the methods reviewed.

Overall, the methods {\tt BFS}, {\tt IHT}, {\tt PSS}, and {\tt ZCWS} differ primarily
in the strength of the stationarity conditions they enforce. {\tt BFS} targets
basic feasibility, a weak but foundational form of first-order stationarity on
fixed supports. {\tt IHT} relies on \(L\)-stationarity defined via a
projected-gradient fixed-point condition. {\tt PSS} enforces partial
coordinatewise stationarity with respect to both active-coordinate descent and
single-index support swaps. {\tt ZCWS} targets ZCW stationary points, which
correspond to the strongest notion in this hierarchy. This ordering explains both the increasing algorithmic complexity and the
progressively stronger optimality guarantees of these methods. The proposed
{\tt RZCW-PSS} method fits naturally within this hierarchy: it retains the
ZCW-stationarity target while enriching the search process through randomized
low-dimensional projected subspace exploration. For completeness and clarity, Section~\ref{sec:StatConcept} gives the precise
definition of the ZCW stationarity concept used in this paper. The broader
hierarchy of related stationarity notions, including basic feasibility,
\(L\)-stationarity, and ZCW stationarity, is summarized in
Figure~1 of \cite{suppMat}, where arrows indicate implications from stronger to
weaker conditions.

Beyond the stationarity-based methods reviewed above, several other algorithmic
families have been developed for cardinality-constrained and sparse optimization.
Greedy methods, such as matching-pursuit and forward/backward selection
schemes, are computationally attractive, but may be sensitive to noise,
correlations, and early variable-selection errors
\cite{cvetkovic2022greedy,wen2025randomized}. Convex relaxation approaches,
including basis pursuit, {\tt LASSO}, and elastic-net formulations, replace the
\(\ell_0\)-constraint by tractable convex surrogates, but typically produce
biased or only approximately sparse solutions
\cite{Chen1998,Esmaeili2018,tibshirani1996regression}. Thresholding and
projected-gradient methods, including {\tt IHT} and its nonmonotone or
support-changing variants, provide scalable first-order alternatives but are
often sensitive to stepsize rules and problem conditioning
\cite{bergamaschi2025probabilistic,Blumensath2009,hu2025convergence,lu2013sparse,zhao2020optimal}.
Penalty-decomposition and augmented-Lagrangian frameworks form another important
class, since they separate smooth objective components from sparsity or
geometric constraints \cite{KanzowLapucci,Lapucci2020,lu2013sparse}. These
methods can also be combined with line-search strategies and quasi-Newton
updates to improve practical efficiency and scalability
\cite{PDQN}. Finally, mixed-integer and combinatorial formulations can certify global
optimality for some sparse models, but their scalability is limited by the
underlying combinatorial search space
\cite{Bertsimas2016,Burdakov2016}. Recent specialized branch-and-bound frameworks for sparse regression exploit first-order optimization, warm starts, active sets, gradient screening, and problem-specific dual bounds to improve scalability substantially \cite{HazimehMazumderSaab2022}. These methods provide global certificates for \(\ell_0\ell_2\)-regularized sparse regression, whereas the focus of the present work is different: we target general smooth cardinality-constrained problems and develop a stationarity-oriented randomized subspace method rather than an exact global optimization procedure.

\subsubsection{Subspace Methods}

Subspace methods play a central role in practical optimization and can be broadly classified into deterministic approaches \cite{kimiaei2022new,LMBOPT,kimiaei2023new} and randomized approaches \cite{Cartis2022LS,cartis2024randomized,cartis2025random}. These methods are employed either to construct local models within deterministic or randomized subspaces, whose solutions provide effective search directions, or to generate subspaces spanned by collections of vectors such as descent directions, differences of iterates, gradient differences, or related quantities. As emphasized in the comprehensive review of subspace techniques by Yuan \cite{Yuan2014}, the effectiveness of subspace methods in large-scale optimization stems from their ability to replace full-dimensional subproblems with lower-dimensional ones, thereby significantly reducing computational cost while retaining favorable convergence properties. The review systematically covers subspace methods for unconstrained and constrained optimization, nonlinear equations, nonlinear least squares, and matrix optimization, and highlights that the construction and selection of appropriate subspaces are the essential components governing algorithmic performance. In the context of cardinality-constrained optimization, subspace searches enable simultaneous multi-coordinate exploration without explicit enumeration of supports and can significantly enhance the ability of coordinate-based methods to escape poor stationary points.

Projection-free sparse convex optimization provides a related but distinct low-dimensional search paradigm. Frank--Wolfe, or conditional-gradient, methods move toward atoms obtained from linear minimization oracles and maintain iterates as convex combinations of few atoms, thereby naturally producing sparse or low-rank structures \cite{Jaggi2013}. This philosophy is related to our use of low-dimensional exploratory steps, but the setting and mechanism are different: Frank--Wolfe operates over convex hulls of atoms and avoids projections, whereas {\tt RZCW-PSS} works directly with the nonconvex feasible set \(C\cap C_s\) and uses sparse projections, support exchanges, and ZCW-aware reservoir injections.

\subsection{Our Contribution}

Building on the stationarity hierarchy underlying {\tt IHT}, {\tt PSS}, {\tt BFS}, and {\tt ZCWS}, the proposed {\tt RZCW-PSS} method augments classical simplex-style coordinate updates with low-dimensional projected subspace searches constructed from a reservoir of previously accepted feasible points and ZCW-aware support injections.

A key algorithmic component of {\tt RZCW-PSS} is a novel reservoir initialization and update mechanism based on a refined uniform sampling procedure.
The refined uniform sequence, denoted by {\tt usequence} and introduced in \cite{MATRS}, generates well-separated points in a simple ambient domain, typically the hypercube $[-1,1]^n$, without
regard to problem-specific constraints.
In {\tt RZCW-PSS}, we generate a projected uniform sequence (see Section~8 of \cite{suppMat}), called {\tt p-usequence}, by preserving the max-min separation property of {\tt usequence}
and coupling it with sparse feasibility through the application of the sparse projection operators of \cite[Algorithms 1--4]{Beck2016} to each generated sample, yielding a reservoir of points in $C \cap C_s$ that remains well distributed after projection and is directly usable by the algorithm.

The proposed framework consists of the following components \textbf{(i)}--\textbf{(vii)}:

\bfi{(i) Initialization via Structured Exploration and {\tt BFS} Warm Start.}
The algorithm initializes the reservoir using the {\tt p-usequence} procedure,
producing a diverse collection of feasible points in $C \cap C_s$ without relying
on local optimality. This structured exploration mitigates sensitivity to the
initial point and provides broad coverage of the sparse feasible region. A
{\tt BFS} warm-start phase is then applied to refine feasibility and objective
value before the main iterations. The integration of {\tt p-usequence}
initialization with a {\tt BFS}-based warm start is novel.

\bfi{(ii) Coordinatewise Search.}
The method employs coordinatewise descent steps based on one-dimensional
optimization along individual coordinates. These steps provide inexpensive and
reliable local improvements while preserving sparse feasibility. They enforce
partial coordinatewise stationarity on both active and inactive coordinates when
the support is not saturated. This component follows established partial simplex
search and coordinate descent principles.

\bfi{(iii) Support Swapping.}
When the support is full, the algorithm performs symmetry-aware single-index
support swaps. Weak active coordinates are replaced by inactive coordinates
exhibiting the strongest descent potential. This mechanism allows the method to
escape suboptimal supports that block coordinatewise descent. The swap strategy
itself is standard, but its role within the unified {\tt RZCW-PSS} framework is
refined.

\bfi{(iv) Randomized Exploration.}
Beyond coordinatewise and swap moves, the algorithm performs randomized
exploration over low-dimensional sparse subspaces. These subspaces are constructed from a reservoir of previously accepted feasible
points; each basis direction is restricted to the current active support together
with at most one inactive coordinate, while feasibility of general subspace trial
points is restored by projection onto \(C\cap C_s\). Subspace optimization enables simultaneous multi-coordinate
adjustments that cannot be captured by one-dimensional searches. This randomized
sparse subspace mechanism is new.

\bfi{(v) Global Refinements.}
When local mechanisms fail to produce improvement after approximate stationarity,
the algorithm activates a global refinement step based on {\tt p-usequence}.
A fresh collection of well-separated feasible points in $C \cap C_s$ is generated
to reinitialize the reservoir and restart the search from a new basin of
attraction. This refinement strategy preserves sparse feasibility and avoids blind
random restarts. Such a controlled, projection-aware global refinement mechanism
is novel.

\bfi{(vi) ZCW-Aware Reservoir Injection.}
The algorithm includes a probabilistic reservoir enrichment mechanism designed to
detect violations of  ZCW stationarity in the saturated regime.
Refined probe points are generated by approximately solving restricted
subproblems over ZCW super supports and are inserted into the reservoir
subject to diversity pruning. Under the uniform support-hitting condition used in
the convergence analysis, the mechanism ensures that ZCW super supports
associated with stationarity violations are sampled infinitely often along
saturated subsequences with probability one. The resulting ZCW-aware reservoir
injection is a new contribution linking randomized exploration to
support-based stationarity detection.

\bfi{(vii) Inexact Acceptance Rule and Support-Identification Safeguard.}
The algorithm uses an inexact candidate-set acceptance rule that selects the next
iterate from the finite set of generated feasible candidates up to a prescribed
outer tolerance. This rule separates the outer candidate-selection error from the
inner restricted-solver error used in the ZCW reservoir injection step. In
addition, we introduce an optional support-identification safeguard in
the last step of the {\tt RZCW-PSS} algorithm. When activated, this safeguard filters the candidate set before
acceptance: every admissible candidate must have full support and active
components uniformly bounded away from zero, and any candidate that changes the
current support must produce a fixed objective decrease. This safeguard is not
part of the ZCW stationarity definition; it is an algorithmic device used
only in the local convergence analysis. It guarantees eventual full-support
stabilization under full-support initialization, which allows the almost-sure
ZCW stationarity result to extend from full-support accumulation points to all
accumulation points in the safeguarded implementation.

\bfi{Convergence.} From a theoretical perspective, we establish basic convergence
properties of the proposed method and prove that the ZCW-aware reservoir injection mechanism is sufficient to detect  ZCW violations in the saturated regime. In particular, under mild regularity assumptions and a uniform support-hitting condition, every full-support
accumulation point of the {\tt RZCW-PSS} iterates is  ZCW stationary almost surely. Moreover, when the support-identification safeguard is imposed together with full-support initialization, the generated sequence eventually remains on one fixed full support. Consequently, in the safeguarded implementation, every accumulation point is full-support and hence ZCW stationary almost surely.

\bfi{Convergence Rates and Complexity.}
Global iteration complexity bounds are generally unavailable for
cardinality-constrained optimization due to the nonconvex and combinatorial structure of the feasible set. Instead, our analysis separates the stochastic support-exploration phase from the deterministic local refinement phase. 

For the exploration phase, we provide an expected hitting-time bound for a fixed violated ZCW super support under a uniform support-hitting probability condition. This result should not be interpreted as a global finite-time optimality. Under the support-identification safeguard and full-support initialization, the support stabilizes after finitely many iterations. Once the stabilized support enters the local neighborhood where restricted strong
convexity, the restricted projected-gradient error bound, and the local
candidate-richness condition hold, the refinement phase converges linearly,
yielding the logarithmic local iteration complexity derived in
Section~7 of \cite{suppMat}. This transition from stochastic support exploration
to deterministic restricted refinement is a key feature of the proposed hybrid
reservoir framework.

\bfi{Numerical Results.}
From a numerical standpoint, the proposed {\tt RZCW-PSS} framework is designed
to recover sparse solutions that are structurally meaningful relative to those
obtained by existing methods, while achieving superior optimization quality; see,
for example, Figures~3--8 in \cite{suppMat}. The reservoir mechanism enables the construction of informative low-dimensional search subspaces from sparse, reservoir-based directions, allowing randomized subspace searches to exploit nonlocal information while preserving compatibility with the cardinality constraint. Rather than targeting exact support recovery, the numerical goal is to attain high-quality sparse points with low objective values while maintaining substantial overlap with reference supports. Empirical results on a collection of
synthetic sparse learning problems confirm that {\tt RZCW-PSS} improves
robustness and solution quality relative to classical methods such as {\tt ZCWS},
without a substantial increase in per-iteration computational complexity.


\section{Preliminaries}

This section recalls the main notation, symmetry classes, and ZCW stationarity concept for cardinality-constrained optimization that are used throughout the paper. These notions are based primarily on the
framework of Beck and Hallak~\cite{Beck2016}. We include them here to make the presentation self-contained and to clarify how the proposed {\tt RZCW-PSS} method uses sparse projections, super supports, and  stationarity conditions.

\subsection{Symmetric Sets and Symmetry-Aware {\tt BFS} Ranking}\label{sec:SymmSet}

Let $\mathfrak S_n$ denote the group of all permutations of the index set $[n] = \{1,\dots,n\}$.
Given a vector $x\in\mathbb R^n$ and a permutation $\pi\in\mathfrak S_n$, we define the permuted vector
$x^\pi\in\mathbb R^n$ by rearranging the components of $x$ according to $\pi$, namely $(x^\pi)_i := x_{\pi(i)}$ for all $i\in [n]$.

Among all permutations, we distinguish those that reorder the components of a vector in
nonincreasing order. For \(x\in\mathbb R^n\), let \(\widetilde{\mathfrak S}(x)\) denote the set of
permutations \(\pi\in\mathfrak S_n\) satisfying $x_{\pi(1)}\ge x_{\pi(2)}\ge\cdots\ge x_{\pi(n)}$. Any \(\pi\in\widetilde{\mathfrak S}(x)\) is called a sorting permutation of \(x\).

We now describe several symmetry properties of convex sets.
Let $C\subseteq\mathbb R^n$ be closed and convex.
The set $C$ is called type-1 symmetric if it is invariant under arbitrary permutations of coordinates,
that is, $x^\pi\in C$ for all $x\in C$ and all $\pi\in\mathfrak S_n$.
It is called nonnegative if every vector in $C$ has nonnegative components.
Finally, $C$ is said to be type-2 symmetric if it is type-1 symmetric and, in addition, invariant under
componentwise sign changes, meaning that $x\circ y\in C$ for all $x\in C$ and all sign vectors
$y\in\{-1,1\}^n$, where $(x\circ y)_i:=x_i y_i$.

Throughout this work, we consider closed and convex sets $C$ that are either nonnegative type-1
symmetric or type-2 symmetric.
To unify notation in both cases, we use the mapping $\textbf{p}:\mathbb R^n\to\mathbb R^n$ defined by
\begin{equation}\label{e.pdef}
	\textbf{p}(x)=
	\begin{cases}
		x, & \text{$C$ nonneg. type-1},\\
		|x|, & \text{$C$ type-2},
	\end{cases}
\end{equation}
where $|x|$ denotes the vector of componentwise absolute values.
Sorting permutations are then taken with respect to $\textbf{p}(x)$, that is,
$\pi\in\tilde{\mathfrak S}(\textbf{p}(x))$.

{\bf Symmetry-aware {\tt BFS} ranking.} Let $x\in \Omega:= C \cap C_s$ and $g(x)=\nabla f(x)$. Inactive indices are ranked according to the score
\begin{equation}\label{e.sigmap}
	\sigma_i(x) := \mathbf p_i(-g(x)),
	\qquad i\in I_0(x),
\end{equation}
which measures the magnitude of first-order optimality violation in a manner
consistent with the symmetry of $C$. It is easily obtained from \eqref{e.pdef} that, for nonnegative type-1 symmetric sets, \(\sigma_i(x)=-g_i(x)\), while for
type-2 symmetric sets, \(\sigma_i(x)=|g_i(x)|\). The \texttt{BFS} ranking orders inactive indices in decreasing order of
\(\sigma_i(x)\), thereby selecting first the indices with the largest
symmetry-aware descent score.

Sampling from the ranked inactive set assigns strictly positive probability to
each of the top-ranked indices. The scores are continuous in \(x\) whenever \(f\) is continuously differentiable.
They are symmetry-aware through the use of \(\mathbf p(\cdot)\), but they are not
invariant under coordinate permutations or sign changes unless additional symmetry
assumptions are imposed on \(f\). The induced ranking may be set-valued in the presence of ties; ties are
resolved by the prescribed randomized selection rule. 

\subsection{ZCW stationary point}\label{sec:StatConcept}

In the introduction, coordinatewise stationarity is mentioned only informally to
motivate the limitations of classical coordinatewise sparse methods. Here, we focus exclusively on  zero-coordinatewise stationarity,
which is the optimality concept targeted by {\tt RZCW-PSS} and used in the
convergence theory. Precise definitions of other stationarity notions and their
relationships are deferred to the supplemental material \cite{suppMat}.

Suppose that
\(C\subseteq\mathbb R^n\) is either nonnegative type-1 symmetric
or type-2 symmetric. Let \(x^*\in\Omega\) be a basic feasible point, recall \(\mathbf p(\cdot)\) from~\eqref{e.pdef} and define the set of least significant active indice by
\begin{equation}\label{eq:Ddef}
	Q(x^*):=
	\argmin_{q\in I_1(x^*)} \mathbf p_q(x^*),
\end{equation}
and define the indices
\begin{equation}\label{eq:ijdef}
	q^*:=q(x^*)\in
	\argmin_{q\in Q(x^*)}
	\mathbf p_q(-g(x^*))
	~~\text{and}~~
	j^*:=j(x^*)\in
	\argmin_{j\in I_0(x^*)}
	\left\{-\mathbf p_j(-g(x^*))\right\}.
\end{equation}

Moreover, let $\pi\in
\widetilde{\mathfrak S}
\!\left(
-\mathbf p(-g(x^*))
\right)$ and choose \(k\in[n]\) such that $
\left|
T^*
\right|
=
s
$, where
\begin{equation}\label{eq:supsubzcw}
	T^*
	:=
	\left(
	S^\pi_{[k,n]}
	\cup I_1(x^*)
	\cup \{j^*\}
	\right)
	\setminus \{q^*\}.   
\end{equation}
Then \(x^*\) is called a ZCW  point if $f(x^*)
\le
\min
\left\{
f(y):
y\in C,\;
I_1(y)\subseteq T^*
\right\}$.

ZCW stationarity is a support-based condition. After the active and inactive
indices are selected, a super support set \(T^*\) is constructed, and \(x^*\) is
compared with all feasible points whose support is contained in \(T^*\). In contrast, the simple-CW criterion is based on a single coordinatewise
exchange. Given an active index \(q\) and an inactive index \(j\) selected according to
\eqref{eq:ijdef}, the corresponding simple-CW candidate is obtained by
replacing the active coordinate \(q\) with the inactive coordinate \(j\) in a
manner consistent with the symmetry structure of the set \(C\). The resulting
candidate preserves both the sparsity level and the magnitude of the
exchanged entry. Thus,
ZCW (full-CW) is stronger than simple-CW because it tests optimality over an entire
restricted sparse subproblem, rather than only over the simple swap candidates.


\section{Our {\tt RZCW-PSS} Method}

In this section, we describe the general framework of {\tt RZCW-PSS}, for solving \eqref{eq:prodef}.

\subsection{Subspace Optimization over Sparse Manifolds}\label{sec:res-cw}

\bfi{Reservoir-induced projected subspaces and reduced optimization.}
The proposed algorithm incorporates searches over low-dimensional projected
subspaces constructed from a finite reservoir of feasible points. The reservoir
\(\mathcal R\subset C\cap C_s\) is initialized using the refined uniform
sampling procedure based on {\tt p-usequence}; see Section~8 of \cite{suppMat}.
This procedure generates well-separated feasible points according to a max-min
distance criterion and projects them onto \(C\cap C_s\) using the sparse
projection operators for symmetric convex sets described in Algorithms~1--4
of \cite{Beck2016}, see also Section~3 of \cite{suppMat}. The resulting reservoir provides a geometrically diverse
set of feasible reference points and allows the subspace mechanism to generate
nonlocal exploration directions that do not rely solely on local information at
the current iterate.

Given a current iterate \(x\in C\cap C_s\), let
\(\mathfrak U(x;\mathcal R)\) denote the collection of subspaces generated from
\(\mathcal R\) with respect to $x$. For each selected reservoir point \(r^\ell\in\mathcal R\), a
preliminary reservoir direction \(d_\ell\) is obtained from the displacement
\(r^\ell-x\) by retaining the components indexed by
\(I_1(x)\cup\{j_\ell\}\), for some \(j_\ell\in I_0(x)\). Thus, using
\eqref{eq:supdef},
\[
\operatorname{supp}(d_\ell)
\subseteq
I_1(x)\cup\{j_\ell\},
\qquad
\forall\ell\in[m].
\]
Hence, each preliminary direction is associated with the current active support
together with at most one inactive coordinate, and is compatible with a single
potential support exchange.

The sparsity restriction is imposed on the preliminary directions \(d_\ell\),
before orthonormalization. Nonzero preliminary directions are normalized and
assembled into a matrix, and an orthonormal basis is then extracted using a
numerically stable orthonormalization procedure, such as modified Gram-Schmidt,
QR factorization, or singular value decomposition. We denote the resulting basis
by
\begin{equation}\label{eq:Udef}
	U=[u_1,\ldots,u_m]\in\mathbb R^{n\times m},
\end{equation}
where \(m\) is the subspace dimension. The corresponding reservoir-induced
subspace is
\[
\mathcal U=\operatorname{span}(U)
=
\operatorname{span}\{u_1,\ldots,u_m\}.
\]

The orthonormalized basis vectors \(u_\ell\) span the same subspace generated by
the sparse preliminary directions \(d_\ell\), but the notation \(u_\ell\) is
reserved for the normalized/orthonormalized basis used in the subspace search. A
general vector in the subspace can be written as
\[
u=\sum_{\ell=1}^m a_\ell u_\ell\in\mathcal U,
\qquad
a_\ell\in\mathbb R,\quad \forall\ell\in[m],
\]
or equivalently \(u=Ua\), where $a=(a_1,\ldots,a_m)^\top\in\mathbb R^m$. Such a vector may combine basis directions associated with different inactive
coordinates. Hence, the subspace perturbation \(x+u\) need not itself be sparse
feasible before projection.

Given the orthonormal basis \(U\), the method performs a reduced smooth
optimization over the subspace rather than sequential one-dimensional line
searches. Specifically, it considers the reduced model $\phi(\alpha):=f(x+U\alpha)$ with
$\alpha\in\mathbb R^m$ with exact reduced gradient $\nabla\phi(\alpha)=U^\top g(x+U\alpha)$. The inner optimization is terminated at an approximate stationary point
\(\alpha^\ast\) of the reduced model, or at a point that provides sufficient
decrease in the projected trial objective. The resulting trial point is then
projected onto the feasible set to restore both sparse and convex feasibility:
\begin{equation}\label{eq:projected-subspace-candidate}
	x^+ := \proj_{C\cap C_s}(x+U\alpha^\ast).
\end{equation}
Thus, the subspace optimization step is a projected search mechanism: the
reduced optimization is carried out in the ambient subspace, while feasibility is
enforced at the trial-point level through projection onto
\(\Omega=C\cap C_s\).

This subspace step enables simultaneous multi-coordinate adjustments and is used
solely for continuous exploration. It is not identified with the coordinatewise,
simple-CW, or ZCW optimality conditions. In particular, swap operations are
applied only through the coordinatewise/simple-CW mechanism described below, and
are not interpreted as random subspace moves.

To prevent degeneration of the subspace exploration mechanism, the reservoir is
updated dynamically using objective-based selection and a minimum-distance
diversity criterion. This keeps the stored points both competitive in objective
value and well separated in the decision space. The reservoir update rule,
including diversity pruning, is described in Section~15 of \cite{suppMat}; it is
the mechanism that keeps \(\mathcal R\) informative and underlies the uniform
probability bounds used in the complexity analysis of Section~7 of
\cite{suppMat}.

For active-coordinate refinement, for each \(i \in I_1(x^k)\), we define
\begin{equation}\label{eq:active-coordinate-value}
	f_i^k
	:=
	\min_{t \in \mathbb{R}}
	f(x^k + t e_i).
\end{equation}
Thus, \(f_i^k\) is the minimum objective value attainable by varying the
\(i\)th coordinate.
So, the index used for active-coordinate refinement and the refinement value are
\begin{equation}\label{eq:active-coordinate-index}
	i_1^k \in \argmin_{i \in I_1(x^k)} f_i^k  \quad \text{and} \quad D_1^k := f_{i_1^k}^k.
\end{equation}

When \(\|x^k\|_0=s\), from \eqref{eq:ijdef}, the  simple-CW indices are reconsidered as
\begin{equation}\label{eq:simple-cw-indices}
	q^k:=q(x^k),
	\qquad
	j^k:=j(x^k),
\end{equation}
and the simple-CW, \(+\)-swap, candidate is defined by
\begin{equation}\label{eq:simple-cw-plus-generic}
	x^{\,q^k\to j^k}_+
	:=
	x^k - x_{q^k} e_{q^k} + x_{q^k} e_{j^k}.
\end{equation}
For type-2 symmetric sets, the additional sign-symmetric candidate (called \(-\)-swap) is
\begin{equation}\label{eq:simple-cw-minus-generic}
	x^{\,q^k\to j^k}_-
	:=
	x^k - x_{q^k} e_{q^k} - x_{q^k} e_{j^k} .
\end{equation}
Then, the swap candidates
are defined at \(x^k\) as
\begin{equation}\label{eq:simple-cw-candidate-set-generic}
	\mathcal S^k_{\rm CW}
	:=
	\mathcal S_{\rm CW}(x^k):=
	\begin{cases}
		\left\{
		x^{\,q^k\to j^k}_+,
		x^{\,q^k\to j^k}_-
		\right\},
		& \text{$C$ type-2},\\[2mm]
		\left\{
		x^{\,q^k\to j^k}_+
		\right\},
		& \text{$C$ nonneg. type-1}.
	\end{cases}
\end{equation}

So, the simple-CW swap value is
\begin{equation}\label{eq:simple-cw-value}
	D_2^k
	:=
	\min\left\{
	f(y):y\in \mathcal S^k_{\rm CW}
	\right\}.
\end{equation}

Therefore, the candidate yielding the smaller objective value is accepted, provided
that it gives sufficient decrease. Specifically, if \(D_1^k < D_2^k\), then
\begin{equation}\label{eq:active-coordinate-update}
	x^{k+1} :=x^k + t_1^k e_{i_1^k},
\end{equation}
where
\begin{equation}\label{eq:active-coordinate-stepsize}
	t_1^k \in
	\argmin_{t \in \mathbb{R}}
	f(x^k + t e_{i_1^k}).
\end{equation}
Otherwise, \(x^{k+1}\) is set to the best simple-CW swap candidate:
\begin{equation}\label{eq:simple-cw-update}
	x^{k+1}
	\in
	\argmin
	\left\{
	f(y):\,
	y\in \mathcal S^k_{\rm CW}
	\right\}.
\end{equation}

{\bf ZCW-Aware Reservoir Injection Mechanism.}   To ensure that the reservoir remains sufficiently informative for generating
ZCW-related support-exchange candidates, the algorithm incorporates a
\textbf{randomized ZCW-aware reservoir enrichment mechanism}. Specifically, at
every iteration $k$ with $\|x^k\|_0=s$, the algorithm activates a ZCW-aware injection step with probability $p^k\in(0,1)$ satisfying
\begin{equation}\label{eq:infpk}
	\inf_{k\ge 1} p^k > 0,
\end{equation}
and, in particular,
\begin{equation}\label{eq:pklb}
	p^k\ge p_{\mathrm{inj}}>0,
\end{equation}
where $p_{\mathrm{inj}}\in(0,1)$ is a user-specified tuning parameter. The
condition $p^k\in(0,1)$ excludes degenerate cases in which the injection step is
either never activated (\(p^k=0\)) or deterministically enforced (\(p^k=1\)), while the uniform lower
bound \eqref{eq:infpk} guarantees infinitely many injection events almost surely.

Each activation triggers a symmetry-aware {\tt BFS} selection of active and
inactive indices using the  simple-CW index selection rule, followed
by the construction of a ZCW super support. A restricted subproblem is then
approximately solved by an appropriate inner solver. When the restricted model has
additional structure, such as a composite convex form, a {\tt FISTA}-type
method~\cite{beck2009fast} may be used; otherwise, the convergence analysis only
requires the stated restricted-solver accuracy and does not rely on the convexity of
the restricted objective. While condition~\eqref{eq:infpk} is primarily used to establish
almost-sure detection of ZCW violations, it also implies a uniform lower bound on
the probability of successful support-correcting injections, which underlies the
expected hitting complexity analysis in Section~7 of \cite{suppMat}. The ZCW-aware reservoir injection mechanism consists of five components:
support selection, the probabilistic sampling model, the uniform exploration
condition, restricted-subproblem solvability, and the restricted solve with
reservoir update, described in (i)--(v) below.

{\bf (i) BFS-Based Support Selection.}
Conditioned on the activation event, we recall \(Q(\cdot)\) from
\eqref{eq:Ddef}, written as \(Q(x^k)\) at \(x^k\). We select an active index
\(q^k\) and an inactive index \(j^k\) according to \eqref{eq:ijdef}. Let
\[
\pi^k
\in
\widetilde{\mathfrak S}
\left(
-\mathbf p(-g(x^k))
\right),
\]
and choose \(k_0\in[n]\) so that the resulting ZCW super support has cardinality
\(s\). Then, using the construction in \eqref{eq:supsubzcw}, define
\begin{equation}\label{eq:supconrul}
	T^k
	:=
	\left(
	S^{\pi^k}_{[k_0,n]}
	\cup I_1(x^k)
	\cup \{j^k\}
	\right)
	\setminus\{q^k\}.   
\end{equation}
Here, \(q^k\) is selected among the least significant active components, with
the gradient tie-breaker, while \(j^k\) is selected among inactive indices
according to the symmetry-aware score.

{\bf (ii) Probabilistic Model and Sampling Rule.}
We define a probability space \((\Xi,\mathcal F,\Pr)\), where \(\Xi\) is the
sample space of all random outcomes generated by the algorithm, \(\mathcal F\)
is a \(\sigma\)-algebra of events, and \(\Pr\) is the probability measure. The
randomness comes from the reservoir-injection decisions and from the randomized
support-selection rule.

Let \((\mathcal F^k)_{k\ge0}\) denote the natural filtration of the algorithm.
That is, each \(\mathcal F^k\) is a \(\sigma\)-algebra representing all
information available after completion of iteration \(k\) and
\[
\mathcal F^0\subseteq \mathcal F^1\subseteq \cdots \subseteq \mathcal F^k .
\]
A random object is said to be \(\mathcal F^k\)-measurable if its value is fully
determined by the information available up to iteration \(k\).

Equivalently, before the fresh random choices of iteration \(k+1\), the current
iterate \(x^{k+1}\) is \(\mathcal F^k\)-measurable. Thus, at iteration \(k\),
the conditioning \(\sigma\)-algebra \(\mathcal F^{k-1}\) contains the current
iterate \(x^k\), while the new random choices of iteration \(k\) have not yet
been made.

We define the event
\begin{equation}\label{e:ak}
	A^k := \{\text{ZCW-aware injection is activated at iteration } k\}.
\end{equation}
Conditional on \(\mathcal F^{k-1}\),
the activation event \(A^k\) is generated first. Conditional on
\((\mathcal F^{k-1},A^k)\), the ZCW super support \(T^k\) is then generated
according to the support-selection distribution specified below. In particular,
\(A^k\), \(T^k\), and the event
\[
A^k\cap\{T^k=T\}
\]
are \(\mathcal F^k\)-measurable. This convention justifies the conditional
probability factorization used in \eqref{eq:prob-factorization} in the proof of
Theorem~\ref{thm:RZCW-PSS-ZCW}, below.

{\bf (iii) Uniform Exploration Guarantee.}
The activation probability \(p^k\) in \eqref{eq:infpk} controls whether the
ZCW-aware reservoir injection mechanism is performed at iteration \(k\).
Conditional on this activation, the inactive coordinate \(j^k\in I_0(x^k)\) is
selected according to a probability distribution
\(\{p_j^k\}_{j\in I_0(x^k)}\), where
\[
p_j^k
:=
\Pr\big(j^k=j\mid \mathcal F^{k-1},A^k\big).
\]
We assume that this distribution satisfies the coordinate-level lower bound
\begin{equation}\label{eq:uniform-hitting}
	p_j^k
	\;\ge\;
	\frac{c_j}{|I_0(x^k)|},
	\qquad \forall j \in I_0(x^k),
\end{equation}
for some constant \(c_j\in(0,1]\) independent of \(k\). This condition ensures
that no inactive coordinate is systematically excluded once the ZCW-aware
reservoir injection mechanism is activated.

For the almost-sure ZCW convergence analysis, however, the required condition is
the corresponding support-level exploration property. Namely, for every ZCW
super support \(T\) that can be generated by the construction at \(x^k\), the
ZCW-aware reservoir injection mechanism assigns probability
\begin{equation}\label{eq:support-hitting}
	\Pr(T^k=T\mid\mathcal F^{k-1},A^k)\ge c_0,
\end{equation}
for some constant \(c_0>0\) in the local neighborhood under consideration. This
support-level lower bound can be enforced by adding a uniform exploration
component over the admissible choices used to construct \(T^k\), including the
inactive index, active index, sorting tie-breaks, and cutoff defining the ZCW
super support. This is the condition used in the almost-sure ZCW convergence
proof.

\bfi{(iv) Restricted ZCW subproblem solvability.}
Following the  ZCW framework, we assume that every
restricted ZCW subproblem generated by the algorithm is solvable. That is, for
every ZCW super support \(T\) generated by the support-construction rule \eqref{eq:supconrul}, the
problem
\[
\min
\left\{
f(z):
z\in C,\;
\operatorname{supp}(z)\subseteq T
\right\}
\]
attains its minimum.

This assumption is used only for the ZCW restricted solves. It is distinct from
the well-definedness of Euclidean projections, which follows from closedness, and uniqueness of convex projections. The assumption is automatic, for example, when the
restricted feasible set is compact, as in simplex, box, and bounded \(\ell_p\)-ball
constraints.

{\bf (v)  Restricted Subproblem and Reservoir Update.}
The ZCW-aware restricted solve is distinct from the randomized subspace
optimization step described above. The projected subspace candidate $x^+$ (defined by \eqref{eq:projected-subspace-candidate}) is generated from a reservoir-induced basis \(U\). In contrast, the ZCW-injected
candidate \(r^k\) is generated by solving a restricted sparse subproblem over
the ZCW super support \(T^k\), and it does not depend on the subspace basis
\(U\).

Given the ZCW super support \(T^k\) in \eqref{eq:supconrul}, a refined probe
point \(r^k\in C\cap C_s\) is obtained by approximately solving the restricted
problem over \(T^k\) using a suitable inner solver. Specifically, the computed
point satisfies
\begin{equation}\label{eq:defxik_final_Tk}
	f(r^k)
	\le
	\min
	\left\{
	f(z):
	z\in C,\;
	\mathrm{supp}(z)\subseteq T^k
	\right\}
	+\varepsilon_T^k,
\end{equation}
where \(\varepsilon_T^k\ge0\) denotes the restricted subproblem optimization
error. The sequence \(\{\varepsilon_T^k\}_{k\geq 0}\) is controlled through the stopping
criterion of the inner solver.

The restricted solver is initialized from a warm-start point \(\tilde x^k\).
This point is obtained by preserving the components of \(x^k\) on the generated
support \(T^k\), removing the selected departing active coordinate \(q^k\), and
transferring its value to the selected incoming inactive coordinate \(j^k\). In
particular,
\[
\tilde{x}^k_{j^k}=x^k_{q^k},
\qquad
\tilde{x}^k_{q^k}=0.
\]
Thus, the warm start mimics the simple support exchange \(q^k\to j^k\): the
active component at \(q^k\) is removed, while the inactive coordinate \(j^k\) is
activated using the transferred value.

The resulting point \(r^k\) is inserted into the current candidate set and into
the reservoir, subject to objective-based and distance-based pruning rules.
Inserting \(r^k\) into the current candidate set is essential: the convergence
analysis uses the fact that an improvement found by the restricted ZCW subproblem, once generated, is
immediately available for acceptance by the algorithm. The pruning rules ensure
that the reservoir remains a diverse global memory of the search space,
preventing it from becoming locally degenerate.

\bfi{Acceptance rule and support-identification safeguard.}
At iteration \(k\), all feasible candidates generated by the algorithm are
collected in a finite candidate set \(\Omega^k\subseteq C\cap C_s\), with
\(x^k\in\Omega^k\). Without the support-identification safeguard, the next
iterate is selected to satisfy
\begin{equation}\label{eq:accept-no-safeguard}
	f(x^{k+1})
	\le
	\min_{z\in\Omega^k} f(z)+\xi^k .
\end{equation}
This rule allows the algorithm to choose any candidate whose objective value is within \(\xi^k\ge0\) of the best generated candidate.

When the optional support-identification safeguard is imposed, the candidate set is
filtered before the acceptance step. Only candidates belonging to
\begin{equation}\label{eq:Omega-hat-safeguard}
	\widehat{\Omega}^k
	:=
	\left\{
	\begin{aligned}
		y\in\Omega^k:\;&
		\|y\|_0=s,\quad
		\min_{i\in I_1(y)}|y_i|\ge\alpha,\\
		&
		\text{and either } I_1(y)=I_1(x^k)
		\text{ or } f(x^k)-f(y)\ge\delta_{\rm supp}
	\end{aligned}
	\right\}
\end{equation}
are admissible, where \(\alpha>0\) and \(\delta_{\rm supp}>0\) are prescribed
constants. Thus, every admissible candidate must have full support and all its
active components must be uniformly bounded away from zero. In addition, a
candidate that changes the current support is admissible only if it produces a
fixed objective decrease of at least \(\delta_{\rm supp}\). Same-support candidates
are not required to satisfy this fixed-decrease condition.
The next iterate is then selected according to
\begin{equation}\label{eq:accept-safeguard}
	f(x^{k+1})
	\le
	\min_{z\in\widehat{\Omega}^k} f(z)+\xi^k .
\end{equation}
If \(\widehat{\Omega}^k=\{x^k\}\), then no admissible improving candidate has been
found. Under full-support initialization with
\begin{equation}\label{eq:full-support-alpha-init}
	\|x^0\|_0=s,
	\qquad
	\min_{i\in I_1(x^0)}|x_i^0|\ge\alpha, 
\end{equation}
the safeguarded admissible set remains nonempty along the generated sequence.
This follows by induction and is proved in
Lemma~\ref{lem:eventual-full-support-stabilization}, below. Hence the safeguarded
acceptance rule is well defined.

The purpose of the safeguard is purely analytical. It is used to prove eventual
full-support stabilization. Since every support-changing accepted step must decrease
the objective by at least \(\delta_{\rm supp}\), infinitely many support changes would
force an unbounded total decrease, contradicting lower boundedness of \(f\) on the
level set containing the iterates. Therefore, only finitely many support changes, Lemma~\ref{lem:eventual-full-support-stabilization}, can
occur under the safeguard.

The safeguard is not part of the  stationarity definition. It is an
algorithmic device used only in the local convergence analysis. The tolerance
\(\xi^k\) denotes the outer candidate-selection error, whereas \(\varepsilon_T^k\)
denotes the inner restricted-solver error. In the convergence analysis these errors
are controlled so that the effective candidate-set inexactness satisfies
Assumption~\ref{ass:subproblem} (below).

For a fixed ZCW super support \(T\) generated by the selected active index,
inactive index, sorting permutation, and cutoff index, the relevant optimality
test is related to the restricted subproblem
\begin{equation}\label{eq:ressubpro}
	\min\{f(y):y\in C,\ I_1(y)\subseteq T\}.
\end{equation}
The current point passes this fixed-support ZCW test if

\[
f(x^k)
\le
\min\{f(y):y\in C,\ I_1(y)\subseteq T\}.
\]

Equivalently, the generated support \(T\) reveals a ZCW violation whenever this
restricted subproblem admits a point with objective value strictly below
\(f(x^k)\). Thus, the analysis does not rely on a gradient-score residual, it
relies on whether the restricted ZCW subproblem over the generated support can
produce a genuine objective decrease.

Under the ZCW-aware reservoir injection mechanism, inactive coordinates and their associated ZCW super supports are sampled with uniformly positive probability along full-support subsequences. Consequently, if a full-support accumulation point violates ZCW stationarity, then a violating super support is generated infinitely often almost surely. Provided that the restricted subproblems are solved with errors \(\varepsilon_T^k\to0\) and the
outer candidate-selection errors satisfy Assumption~\ref{ass:subproblem}, such
violations cannot be overlooked indefinitely. This yields
Theorem~\ref{thm:RZCW-PSS-ZCW} in Section~\ref{sec:conv}, which states that every full-support accumulation point generated by {\tt RZCW-PSS} is ZCW stationary almost surely.

\subsection{The {\tt RZCW-PSS} Algorithm}

Algorithm~\ref{alg:RZCW-PSS} is our new algorithm,  called 
Reservoir ZCW Projected Subspace Search ({\tt RZCW-PSS}).  Algorithm \ref{alg:RZCW-PSS} has the following steps. Candidate points generated by sampling and subspace exploration are projected onto
\(\Omega = C\cap C_s\) whenever feasibility is not automatic. In contrast, the
simple-CW and ZCW probes are constructed through symmetry-aware
support modifications and restricted subproblems, and are not interpreted as
projected coordinate perturbations.

In Step~\textbf{S0}, the initial point is projected onto the feasible set
\(C\cap C_s\). A collection of well-separated perturbation directions
\(\{d^k\}_{k=1}^m\) is generated using the {\tt p-usequence} procedure, and the
candidate points
\[
y^k := \proj_{C \cap C_s}(x^0 + d^k), \qquad \forall k\in[m],
\]
are formed. The reservoir \(\mathcal R\subset C\cap C_s\) is initialized with
these projected perturbations using objective-based and distance-based pruning to
enforce diversity. Finally, a {\tt BFS} warm-start procedure is applied with
\[
\{u^k\}_{k=1}^{m+1}=\{y^1,\ldots,y^m, x^0\},\qquad
i_b:=\argmin\{f(u^k)\}_{k=1}^{m+1},
\]
and initialized at \(x:=u^{i_b}\). The output of this warm start serves as the
starting iterate for the main {\tt RZCW-PSS} loop.

In Step~\textbf{S1}, for \(k=0,1,2,\ldots\), the candidate set is initialized as
\(\Omega^k:=\{x^k\}\). Steps~\textbf{S1a}--\textbf{S1f} then generate feasible
candidates, all of which are inserted into \(\Omega^k\). The next iterate
\(x^{k+1}\) is selected from \(\Omega^k\) by the acceptance rule in
Step~\textbf{S1g}.

In Step~\textbf{S1a}, when \(\|x^k\|_0<s\), projected one-dimensional
coordinate-expansion searches are performed along coordinate directions. For
each generated scalar step, the trial point is projected onto
\(\Omega=C\cap C_s\), inserted into the current candidate set \(\Omega^k\), and
used to update the reservoir through the objective- and distance-based pruning
rule.

In Step~\textbf{S1b}, when \(\|x^k\|_0<s\) after Step~\textbf{S1a}, randomized
sparse subspace searches are constructed from the reservoir. The resulting
projected subspace candidates are generated as in
\eqref{eq:projected-subspace-candidate}, with \(x=x^k\), inserted into
\(\Omega^k\), and used to update the reservoir.

In Step~\textbf{S1c}, when \(\|x^k\|_0=s\), projected active-coordinate
refinement candidates are generated for the active coordinates and the
simple-CW swap candidates in \(\mathcal S_{\rm CW}^k\) from
\eqref{eq:simple-cw-candidate-set-generic} are generated. All generated
candidates are inserted into \(\Omega^k\). If the optional {\tt BFS} polishing
step is enabled, the best candidate currently available after this step is used
as a seed for a short {\tt BFS} refinement, and the resulting polished point is
inserted into \(\Omega^k\) as an additional candidate. It is not accepted
immediately.

In Step~\textbf{S1d}, when \(\|x^k\|_0=s\) after Step~\textbf{S1c}, randomized
sparse subspace searches are constructed from the reservoir. The resulting
projected subspace candidates are generated as in
\eqref{eq:projected-subspace-candidate}, with \(x=x^k\), inserted into
\(\Omega^k\), and used to update the reservoir. These subspace moves are used
only as projected continuous exploration steps and are kept distinct from the
support-exchange tests.

In Step~\textbf{S1e}, when \(\|x^k\|_0=s\), the ZCW-aware reservoir injection is
activated with probability \(p^k\in(0,1)\) satisfying \eqref{eq:infpk}. A
randomized {\tt BFS} reservoir injection is then performed as described in
Subsection~\ref{sec:res-cw}. A ZCW super support \(T^k\) is constructed using
the symmetry-aware selection rules explained by \eqref{eq:supconrul}. A refined
point \(r^k\) satisfying \eqref{eq:defxik_final_Tk} is computed by a suitable
restricted inner solver initialized at a warm-start point. Here,
\(\{\varepsilon_T^k\}_{k\ge 1}\) is the sequence of nonnegative inner
restricted-solver errors. The point \(r^k\) is inserted into the current
candidate set and into the reservoir \(\mathcal R\) using objective- and
distance-based pruning. If the optional {\tt BFS} polishing step is enabled,
{\tt BFS} is initialized at \(r^k\), and the polished point is inserted into
\(\Omega^k\) as an additional candidate. Neither \(r^k\) nor its polished
version is accepted immediately. Inserting \(r^k\) into the current candidate set
ensures that any improvement found by the restricted ZCW subproblem is
immediately available for acceptance by the algorithm.

In Step~\textbf{S1f}, if no sufficient decrease has been generated after
Steps~\textbf{S1a}--\textbf{S1e}, a refinement mechanism is activated. First, if
enabled, an optional {\tt BFS}-refined candidate is generated by initializing
{\tt BFS} at the current iterate \(x^k\). This candidate is inserted into
\(\Omega^k\), but is not accepted immediately. In addition, a global refinement
mechanism based on a fresh projected uniform sampling procedure
({\tt p-usequence}) may be activated. A new collection of well-separated
feasible points in \(C\cap C_s\) is generated, evaluated, inserted into
\(\Omega^k\), and used to update the reservoir through the same pruning rule.
If no admissible improving candidate is selected in Step~\textbf{S1g} after a
bounded number of refinement attempts, the algorithm terminates.

In Step~\textbf{S1g}, after Steps~\textbf{S1a}--\textbf{S1f}, the next iterate
\(x^{k+1}\) is selected using either the standard inexact candidate-set rule
\eqref{eq:accept-no-safeguard} over the generated candidate set \(\Omega^k\), or,
when the optional support-identification safeguard is used, the safeguarded rule
\eqref{eq:accept-safeguard} over the filtered admissible set
\(\widehat{\Omega}^k\) defined in \eqref{eq:Omega-hat-safeguard}. The current
iterate is always retained as a fallback candidate. A newly generated point is
accepted only if it produces an actual objective decrease beyond the numerical
acceptance tolerance; otherwise, the algorithm keeps \(x^k\). The safeguarded
variant is used only to enforce eventual full-support stabilization in the local
convergence analysis.

The optional {\tt BFS} polishing steps described above are implementation
enhancements; they only add extra candidates to \(\Omega^k\) and are therefore
not listed as separate algorithmic steps.

\begin{algorithm}[http!]
	\caption{{\tt RZCW-PSS} - Reservoir ZCW Projected Subspace Search}
	\label{alg:RZCW-PSS}
	\begin{algorithmic}
		\State[\textbf{S0}] Project \(x^0\) onto \(C\cap C_s\), initialize the reservoir \(\mathcal R\), and compute the warm-started initial iterate.
		\For{\(k=0,1,2,\ldots\)}
		\State[\textbf{S1}] Initialize \(\Omega^k:=\{x^k\}\), generate candidates through Steps 
		\State ~\textbf{S1a}--\textbf{S1f}, and select \(x^{k+1}\) in Step~\textbf{S1g}.
		\State[\textbf{S1a}] If \(\|x^k\|_0<s\), generate coordinate-expansion candidates 
		\State and insert them into \(\Omega^k\).
		\State[\textbf{S1b}] If \(\|x^k\|_0<s\), generate reservoir-based projected  
		\State subspace-expansion candidates and insert them into \(\Omega^k\).
		\State[\textbf{S1c}] If \(\|x^k\|_0=s\), generate the active-coordinate refinement  
		\State candidate and the simple-CW swap candidates, and insert them \State into \(\Omega^k\).
		\State[\textbf{S1d}] If \(\|x^k\|_0=s\), generate reservoir-based projected
		\State subspace-refinement candidates  and insert them into \(\Omega^k\).
		\State[\textbf{S1e}] If \(\|x^k\|_0=s\), activate the ZCW-aware reservoir injection 
		\State with probability \(p^k\), generate \(r^k\) satisfying \eqref{eq:defxik_final_Tk}, and insert \(r^k\) 
		\State into \(\Omega^k\) and \(\mathcal R\).
		\State[\textbf{S1f}] If no sufficient decrease is obtained, generate optional {\tt BFS}-refined  
		\State and global-refinement candidates, insert them into \(\Omega^k\), and terminate 
		\State if no admissible  improving candidate is found after bounded attempts.
		\State[\textbf{S1g}] Select \(x^{k+1}\) using either \eqref{eq:accept-no-safeguard} over \(\Omega^k\) or \eqref{eq:accept-safeguard} over \(\widehat{\Omega}^k\).
		\EndFor
	\end{algorithmic}
\end{algorithm}

Further implementation details, including tuning parameters, numerical solvers,
subspace construction, and global refinement for {\tt RZCW-PSS}, are provided in
\cite[Sections~14 and 15]{suppMat}.

\section{Convergence, Rates, and Complexity}

This section provides a comprehensive theoretical analysis of the proposed
{\tt RZCW-PSS} algorithm, covering convergence guarantees, local rate of
convergence, and iteration complexity.
The analysis is structured around four main results.
First, Theorem~\ref{thm:locmin-ZCW} establishes that global minimizers satisfy ZCW stationarity, linking global optimality to the stationarity notion adopted in this work. Second, Theorem~\ref{thm:RZCW-PSS-ZCW} proves that every full-support accumulation
point of the algorithm is  ZCW stationary almost surely.  Finally, under
the optional support-identification safeguard and full-support initialization,
Corollary~\ref{cor:all-accumulation-zcw-safeguard} strengthens this conclusion
to all accumulation points.  

\bfi{Detection and acceptance of restricted descent subproblems.}
The analysis relies on the fact that whenever a  ZCW violation is present at a full-support accumulation point, there exists a ZCW super support \(T^\ast\) whose restricted subproblem gives a strict objective decrease. The ZCW-aware reservoir injection mechanism is assumed to generate such violating super supports with uniformly positive probability near
the accumulation point. Together with the restricted-solver accuracy \(\varepsilon_T^k\to0\) and the
outer candidate-selection accuracy in Assumption~\ref{ass:subproblem}, this prevents the algorithm from repeatedly overlooking valid restricted ZCW
descent subproblems.

We discuss complexity for the new algorithm in \cite{suppMat}, in which Proposition~2  quantifies the corresponding conditional local
iteration complexity, yielding $O\!\left(
\log\!\left(\epsilon^{-1}\right)
\right)$ iterations after support stabilization.

\subsection{Convergence Analysis}\label{sec:conv}

In this section, we establish basic convergence properties of the proposed
{\tt RZCW-PSS} algorithm and prove almost sure  ZCW stationarity of full-support
accumulation points. Under the optional support-identification safeguard, this conclusion extends to all accumulation points. 

The proof strategy is organized as follows. Assumptions~\ref{ass:bounded-sublevel}
and~\ref{ass:subproblem} first provide boundedness of the iterates and sufficient
descent through the candidate-set acceptance rule; in particular, the auxiliary
descent result from \cite[Lemma~1]{suppMat} yields convergence of the objective
values and vanishing actual accepted decrease. The probabilistic framework then
defines the events under which the ZCW-aware reservoir injection mechanism
generates a prescribed ZCW super support. Lemma~\ref{lem.Levy} is used to convert
uniform conditional hitting probabilities into almost-sure infinitely-often
generation of such supports.

The deterministic part of the argument begins with
Lemma~\ref{lem:activation-to-swap}, which shows that if a full-support point is
not ZCW stationary, then some ZCW restricted subproblem gives a strict objective
decrease. Lemma~\ref{lem:c0-bfs} ensures that the corresponding ZCW super support
is sampled with uniformly positive probability along subsequences converging to
that point. Proposition~\ref{prop:nondegeneracy},
Lemma~\ref{lem:proj-stability}, and Theorem~\ref{thm:support-identification}
provide the required local support stability near full-support accumulation
points. Theorem~\ref{thm:locmin-ZCW} provides a consistency result for the ZCW
stationarity concept by showing that every global minimizer over \(\Omega\)
is ZCW stationary. Lemma~\ref{lem:uniform-descent} then transfers the strict descent at the limit
point to a uniform descent estimate in a neighborhood. Combining these
ingredients, Theorem~\ref{thm:RZCW-PSS-ZCW} proves that any non-ZCW
full-support accumulation point would generate accepted decreases bounded away
from zero infinitely often, contradicting the vanishing actual accepted decrease.
Finally, Lemma~\ref{lem:eventual-full-support-stabilization} shows that the
optional support-identification safeguard forces eventual full-support
stabilization, and Corollary~\ref{cor:all-accumulation-zcw-safeguard} upgrades
the conclusion from full-support accumulation points to all accumulation points.
The logical dependence among these results is summarized in
Figure~\ref{fig:conv-roadmap}.

To clarify the logical structure of the local convergence-rate analysis,
Figure~\ref{fig:local-rate-roadmap} summarizes the dependencies among the
fixed-support regime, the restricted regularity assumptions, the
candidate/inexactness conditions, and the descent estimate leading to
Theorem~\ref{thm:local-linear}.

\begin{figure}[t]
	\centering
	\scalebox{0.9}{\begin{tikzpicture}[
		node distance=1.8cm and 1.6cm,
		box/.style={
			draw,
			rounded corners,
			align=center,
			text width=3.8cm,
			minimum height=1.05cm,
			font=\small
		},
		arrow/.style={->, thick},
		dashedarrow/.style={->, thick, dashed}
		]
		
		\node[box] (assumptions)
		{{\bf Assumptions}\\
			\textbf{\ref{ass:bounded-sublevel}--\ref{ass:subproblem}:}\\
			\textbf{boundedness + descent}};
		
		\node[box, right=0.5cm of assumptions] (prob)
		{{\bf Probabilistic framework}\\
			\textbf{events \(A^k,T^k,E^k\)}\\
			\textbf{Lemma~\ref{lem.Levy}}};
		
		\node[box, right=0.5cm of prob] (zcwdescent)
		{{\bf ZCW violation}\\
			\textbf{\(\Rightarrow\) restricted descent}\\
			\textbf{Lemma~\ref{lem:activation-to-swap}}\\
			\textbf{Lemma~\ref{lem:c0-bfs}}};
		
		\node[box, below=0.3cm of prob] (support)
		{{\bf Support stability}\\
			\textbf{Proposition~\ref{prop:nondegeneracy}}\\
			\textbf{Lemma~\ref{lem:proj-stability}}\\
			\textbf{Theorem~\ref{thm:support-identification}}};
		
		\node[box, below=0.3cm of support] (uniform)
		{{\bf Uniform local descent}\\
			\textbf{Lemma~\ref{lem:uniform-descent}}};
		
		\node[box, below=0.3cm of uniform] (main)
		{{\bf Almost-sure ZCW stationarity}\\
			\textbf{Theorem~\ref{thm:RZCW-PSS-ZCW}}};
		
		\node[box, right=0.5cm of main] (locmin)
		{{\bf Global minimizers}\\
			\textbf{\(\Rightarrow\) ZCW stationary}\\
			\textbf{Theorem~\ref{thm:locmin-ZCW}}};
		
		\node[box, below=0.3cm of main] (safeguard)
		{{\bf Safeguard upgrade}\\
			\textbf{Lemma~\ref{lem:eventual-full-support-stabilization}}\\
			\textbf{Corollary~\ref{cor:all-accumulation-zcw-safeguard}}};
		
		\draw[arrow] (assumptions) -- (prob);
		\draw[arrow] (prob) -- (zcwdescent);
		\draw[arrow] (zcwdescent) |- (uniform);
		\draw[arrow] (support) -- (uniform);
		\draw[arrow] (uniform) -- (main);
		\draw[arrow] (main) -- (safeguard);
		
		\draw[arrow] (assumptions) |- (support);
		
	\end{tikzpicture}}
	\caption{Logical roadmap of the convergence analysis.}
	\label{fig:conv-roadmap}
\end{figure}

\begin{figure}[t]
	\centering
	\scalebox{0.9}{\begin{tikzpicture}[
		node distance=1.8cm and 1.6cm,
		box/.style={
			draw,
			rounded corners,
			align=center,
			text width=3.8cm,
			minimum height=1.05cm,
			font=\small
		},
		arrow/.style={->, thick}
		]
		
		\node[box] (support)
		{{\bf Fixed-support regime}\\
			\textbf{Step~\textbf{S1g} + full support}\\
			\textbf{Lemma~\ref{lem:eventual-full-support-stabilization}}};
		
		\node[box, right=0.5cm of support] (regularity)
		{{\bf Restricted regularity}\\
			\textbf{Assumptions~\ref{ass:RSC}, \ref{ass:pg-nondeg}}};
		
		\node[box, right=0.5cm of regularity] (candidate)
		{{\bf Candidate and inexactness}\\
			\textbf{Assumption~\ref{ass:candidate-richness}}\\
			\textbf{Assumption~\ref{ass:subproblem}(ii)}};
		
		\node[box, below=0.3cm of regularity] (descent)
		{{\bf Inexact restricted descent}\\
			\textbf{Proposition~\ref{prop:inexact-descent}}};
		
		\node[box, below=0.3cm of descent] (rate)
		{{\bf Local linear rate}\\
			\textbf{Theorem~\ref{thm:local-linear}}};
		
		\draw[arrow] (support) -- (regularity);
		\draw[arrow] (regularity) -- (candidate);
		
		\draw[arrow] (support.south) |- (descent.west);
		\draw[arrow] (regularity.south) -- (descent.north);
		\draw[arrow] (candidate.south) |- (descent.east);
		
		\draw[arrow] (descent) -- (rate);
		
	\end{tikzpicture}}
	\caption{Logical roadmap of the local convergence-rate analysis.}
	\label{fig:local-rate-roadmap}
\end{figure}

\subsubsection{Problem Setting and Basic Assumptions}
\label{sec:conv:setting}

\begin{ass}\label{ass:bounded-sublevel}
	Let $\Omega = C \cap C_s$. The sublevel set
	\[
	\mathcal L(x^0)
	:=
	\{x \in \Omega : f(x) \le f(x^0)\}
	\]
	is bounded.
\end{ass}

\bfi{Lower boundedness of the objective.}
Since $f$ is continuous and $\Omega$ is closed, the sublevel set
$\mathcal L(x^0)$ is closed, together with boundedness from
Assumption~\ref{ass:bounded-sublevel}, it follows that $\mathcal L(x^0)$ is compact. Therefore, $f$ attains a finite minimum on $\mathcal L(x^0)$, and in particular,
$f$ is bounded below on $\mathcal L(x^0)$.
As all iterates generated by {\tt RZCW-PSS} belong to $\mathcal L(x^0)$,
the objective sequence $\{f(x^k)\}_{k\ge 0}$ is bounded below.

\begin{ass}\label{ass:lip}
	Assume that $f$ is continuously differentiable and that $g$ is Lipschitz
	continuous on bounded subsets of $C$.
\end{ass}

The assumptions imposed in this work are satisfied by a broad class of
constraint sets commonly encountered in sparse optimization, including
the nonnegative orthant, box-constrained sets, probability simplices,
and $\ell_p$-norm balls for $p \ge 1$. These sets are closed and convex,
and satisfy the symmetry properties described in
Subsection~\ref{sec:SymmSet}. Moreover, the projection onto
$\Omega = C \cap C_s$ is well defined for these cases via the sparse
projection operators of \cite{Beck2016}. Therefore, the convergence
analysis applies directly to the standard problem classes considered in
both the literature and our numerical experiments.

We emphasize that the symmetry assumption on $C$ is essential for the theoretical analysis, as it enables explicit characterization of sparse projections and stability of support selection, which are key components of the convergence proofs. 

While the proposed {\tt RZCW-PSS} algorithm can be applied to more general convex constraint sets, the current convergence guarantees rely on this symmetric structure. In particular, for general convex sets with coupling constraints, sparse projections may not admit a tractable structure, necessitating additional analysis. Extending the theoretical results to broader classes of constraint sets without symmetry is an interesting direction for future work.

\begin{ass}[Subproblem accuracy]
	\label{ass:subproblem}
	Let $\Omega^k \subseteq \Omega$ be a candidate set satisfying $x^k \in \Omega^k$,
	and define
	\begin{equation}\label{e.defdeltak}
		\delta^k := f(x^k) - \min_{z \in \Omega^k} f(z) \ge 0.
	\end{equation}
	Assume that the next iterate $x^{k+1}$ satisfies  \eqref{eq:accept-no-safeguard}; that is,
	\[
	f(x^{k+1}) \le \min_{z \in \Omega^k} f(z) + \xi^k,
	\]
	where the error sequence $\{\xi^k\}$ satisfies one of the following:
	\begin{enumerate}
		\item[(i)] $\sum_{k=0}^{\infty} \xi^k < \infty$, or
		\item[(ii)] there exists $\eta \in (0,1)$ such that $\xi^k \le \eta\,\delta^k$ for all $k$.
	\end{enumerate}
\end{ass}

Under either condition (i) or (ii), it will follow from the descent analysis
in Section 2 of \cite{suppMat} that $\delta^k \to 0$. Thus, in both cases,
this further implies that $\xi^k \to 0$.

We define the actual decrease
\begin{equation}\label{e.defDeltak}
	\Delta^k = f(x^k) - f(x^{k+1}).
\end{equation}
The subproblem accuracy assumption in \eqref{e.defdeltak} ensures that approximate solutions in \eqref{eq:accept-no-safeguard}
preserve a sufficient descent condition 
\begin{equation}\label{eq:actdec}
	\Delta^k \ge \delta^k - \xi^k,
\end{equation}
which is central to the convergence analysis.

\subsubsection{Probabilistic Framework and Event Construction}
\label{sec:conv:prob}
Considering probabilistic model, described in Section \ref{sec:res-cw}, we introduce a family of events associated with a reference point
$x^\ast \in \Omega$ with $\|x^\ast\|_0=s$. Let \(T^\ast\) denote a ZCW
super support associated with a violation of  ZCW stationarity
at \(x^\ast\). 
We define the sequence of events $\{E^k(x^\ast)\}_{k\ge 1}$ by
\begin{equation}\label{eq:Ek-def}
	E^k(x^\ast)
	:=
	A^k \cap \{T^k = T^\ast\},
\end{equation}
where $A^k$ denotes the activation of the ZCW-aware reservoir injection mechanism at iteration
$k$, and \(T^k\) is the ZCW super support generated by this injection mechanism.
By construction, $E^k(x^\ast) \in \mathcal F^k$ for all $k \ge 1$.
For notational simplicity, we write $E^k := E^k(x^\ast)$ when the reference point
$x^\ast$ is fixed.

Equivalently, when the active index, inactive index, sorting permutation, and
cutoff defining \(T^k\) are sampled explicitly, the event may be written as
\[
E^k(x^\ast)
=
A^k
\cap
\{q^k=q^\ast\}
\cap
\{j^k=j^\ast\}
\cap
\{T^k=T^\ast\}.
\]
The following result is Lévy's conditional Borel-Cantelli lemma \cite[Section~12.15]{williams1991probability}, which will be used in the sequel. Informally, it asserts that if the cumulative conditional probabilities of a sequence of events, given the past history, diverge almost surely (a.s.), then infinitely many of those events occur (i.o.), almost surely.

\begin{lem}[L\'evy's conditional Borel-Cantelli]\label{lem.Levy}
	Let $(\mathcal F^k)$ be a filtration and $\{E^k\}_{k\geq 1}$ a sequence of events
	with $E^k \in \mathcal F^k$.
	If
	\[
	\sum_{k=1}^\infty \Pr(E^k \mid \mathcal F^{k-1}) = \infty
	\quad \text{a.s.},
	\]
	then $\Pr(E^k \text{ i.o.}) = 1$.
\end{lem}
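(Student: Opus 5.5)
This is Lévy's extension of the second Borel--Cantelli lemma, and I would prove it through martingale convergence. Let \(X^k := \mathbf 1_{E^k}\) and \(p_k := \Pr(E^k\mid\mathcal F^{k-1})\). Define
\[
S_n := \sum_{k=1}^n X^k, \qquad P_n := \sum_{k=1}^n p_k, \qquad M_n := S_n - P_n .
\]
Since \(E^k\in\mathcal F^k\) and \(p_k\) is \(\mathcal F^{k-1}\)-measurable, \((M_n)\) is an \((\mathcal F^n)\)-martingale. Its increments \(X^k-p_k\) lie in \([-1,1]\). The target is the a.s. identity
\[
\{S_\infty=\infty\}=\{P_\infty=\infty\}.
\]
Only the inclusion \(\{P_\infty=\infty\}\subseteq\{S_\infty=\infty\}\) is needed. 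Combined with the hypothesis \(\Pr(P_\infty=\infty)=1\), it gives \(\Pr(E^k \text{ i.o.})=\Pr(S_\infty=\infty)=1\).

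The key step is a localization argument for martingales with bounded increments. Fix \(c>0\) and define the stopping time \(\tau_c:=\inf\{n: M_n < -c\}\). Because increments are bounded below by \(-1\), the stopped martingale satisfies \(M_{n\wedge\tau_c}\ge -c-1\). It is therefore a martingale bounded below, and \(M_{n\wedge\tau_c}+c+1\) is a nonnegative martingale. By Doob's martingale convergence theorem it converges a.s. to a finite limit. Consequently, on \(\{\tau_c=\infty\}\) the limit \(\lim_n M_n\) exists and is finite. Taking the union over \(c\in\mathbb N\), on the event \(\{\inf_n M_n>-\infty\}\) the martingale \(M_n\) converges to a finite limit. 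The symmetric argument, using upper stopping times and \(X^k-p_k\le 1\), shows the same on \(\{\sup_n M_n<\infty\}\).

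Now suppose \(P_\infty=\infty\) but \(S_\infty<\infty\). Then \(M_n=S_n-P_n\to-\infty\), so \(\sup_n M_n<\infty\). By the previous step \(M_n\) then converges to a finite limit, a contradiction. This case is therefore excluded up to a null set, and on \(\{P_\infty=\infty\}\) we have \(S_\infty=\infty\) almost surely. That is, infinitely many of the \(E^k\) occur, which proves the lemma.

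The main obstacle is the localization step. It requires the stopping-time truncation to turn a bound on the increments into a one-sided bound on the process, so that Doob's convergence theorem applies. Everything else is bookkeeping. Alternatively, I could simply cite \cite[Section~12.15]{williams1991probability}, where exactly this argument is carried out.
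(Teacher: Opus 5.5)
Your proof is correct, but it does not follow the source the paper relies on. The paper gives no proof of this lemma; it only cites \cite[Section~12.15]{williams1991probability}, and the argument there is different from yours. Your closing remark that the reference carries out ``exactly this argument'' should therefore be dropped.

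In that reference one writes the Doob decomposition $S_n=M_n+P_n$ and computes the angle-bracket process $\langle M\rangle_n=\sum_{k\le n}p_k(1-p_k)\le P_n$. It then uses two $L^2$-martingale facts: $M_n$ converges a.s.\ on $\{\langle M\rangle_\infty<\infty\}$, and $M_n/\langle M\rangle_n\to0$ a.s.\ on $\{\langle M\rangle_\infty=\infty\}$ (a martingale strong law obtained from Kronecker's lemma). You replace this $L^2$ machinery by one-sided stopping-time localization and the a.s.\ convergence of nonnegative martingales, using only that the increments of $M$ lie in $[-1,1]$.

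Your route is more elementary and self-contained. Running it on $\{\inf_n M_n>-\infty\}$ also gives the reverse inclusion, so you actually obtain $\{S_\infty=\infty\}=\{P_\infty=\infty\}$ a.s. The $L^2$ route costs more theory but delivers the quantitative statement $S_n/P_n\to1$ a.s.\ on $\{P_\infty=\infty\}$. That is, the number of occurrences of the $E^k$ grows like the accumulated conditional probabilities, not merely without bound.

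Both routes give the event-wise inclusion $\{P_\infty=\infty\}\subseteq\{S_\infty=\infty\}$ up to a null set, which contains the lemma as the special case $\Pr(P_\infty=\infty)=1$. This localized form is what the proof of Theorem~\ref{thm:RZCW-PSS-ZCW} actually needs. There, divergence of $\sum_k\Pr(F^k\mid\mathcal F^{k-1})$ is only verified on sample paths along which $x^\ast$ is an accumulation point.
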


\subsubsection{Detection of Stationarity Violations}
\label{sec:conv:detection}

This subsection establishes that violations of  ZCW stationarity
are detected with uniformly positive probability.  The key point is that ZCW
stationarity is a support-based condition. Therefore, a violation is not encoded
by a projected one-dimensional perturbation, but by the existence of a
ZCW super support on which the restricted sparse subproblem attains a
strictly smaller objective value.

Consequently, the probabilistic event used in the convergence analysis must
ensure that the ZCW-aware reservoir injection mechanism generates the relevant
ZCW super support. Once this super support is generated, the restricted
subproblem produces a candidate whose objective value is strictly below
that of the current iterate, up to the prescribed inner accuracy.

\begin{lem}[Activation implies restricted ZCW descent]
	\label{lem:activation-to-swap}
	Let $C \subseteq \mathbb{R}^n$ be closed, convex, and either nonnegative
	type-1 symmetric or type-2 symmetric. Moreover, let $x^\ast\in\Omega=C\cap C_s$ satisfying $\|x^\ast\|_0=s$. Suppose that $x^\ast$ violates  ZCW stationarity. Let
	$q^\ast$, $j^\ast$, and \(T^\ast\) denote the indices and ZCW super support
	associated with this violation, where
	\[
	T^\ast
	:=
	\left(
	S^{\pi^\ast}_{[k^\ast,n]}
	\cup I_1(x^\ast)
	\cup \{j^\ast\}
	\right)
	\setminus\{q^\ast\}
	\]
	for some $\pi^\ast\in
	\widetilde{\mathfrak S}
	\left(
	-\mathbf p(-g(x^\ast))
	\right)$ and some \(k^\ast\in[n]\) chosen so that \(|T^\ast|=s\). Then there exists
	\(\delta>0\) such that
	\begin{equation}\label{e.decfdelta}
		\min_{\substack{z \in C \\ \mathrm{supp}(z)\subseteq T^\ast}}
		f(z)
		\;\le\;
		f(x^\ast)-\delta .
	\end{equation}
\end{lem}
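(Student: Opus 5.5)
The plan is to unwind the definition of ZCW stationarity from Section~\ref{sec:StatConcept} and then use the solvability of restricted subproblems to turn the strict inequality into a quantitative gap $\delta$. By definition, $x^\ast$ is ZCW stationary if $f(x^\ast)\le \min\{f(y): y\in C,\ I_1(y)\subseteq T^\ast\}$ for the super support $T^\ast$ built in \eqref{eq:supsubzcw}. A violation associated with $q^\ast$, $j^\ast$, $\pi^\ast$ and $k^\ast$ therefore means that this inequality fails for that particular $T^\ast$. In other words,
\[
m^\ast:=\inf\{f(z): z\in C,\ \operatorname{supp}(z)\subseteq T^\ast\}<f(x^\ast).
\]
Note that $\operatorname{supp}(z)=I_1(z)$ by \eqref{eq:supdef}. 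Hence the feasible set in \eqref{e.decfdelta} coincides with the one in the ZCW definition. That set is nonempty because it contains the origin-type points of $C$ restricted to $T^\ast$. More directly, the failure of the inequality itself exhibits some $y$ in it.

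The key steps are as follows. First, I will check that the restricted feasible set $\{z\in C:\operatorname{supp}(z)\subseteq T^\ast\}$ is a subset of $\Omega$, since $|T^\ast|=s$, so the minimum is over sparse-feasible points. Second, I will invoke the restricted ZCW subproblem solvability assumption (iv) of Subsection~\ref{sec:res-cw}, which guarantees that the minimum in \eqref{eq:ressubpro} over $T^\ast$ is attained at some $z^\ast$. Here it is used for the support $T^\ast$ generated by the construction rule at $x^\ast$. If one prefers to avoid this assumption, the infimum formulation suffices, since the definition of violation already yields a point $y$ with $f(y)<f(x^\ast)$. Third, I set $\delta:=f(x^\ast)-f(z^\ast)>0$, or $\delta:=f(x^\ast)-f(y)$ in the infimum version. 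Then $\min f\le f(z^\ast)=f(x^\ast)-\delta$, which is exactly \eqref{e.decfdelta}.

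The main obstacle is interpretive rather than analytic. The ZCW definition involves choices of $\pi$ and $k$, and possibly ties in $q^\ast$ and $j^\ast$. The statement of the lemma fixes these choices as "those associated with the violation," so I will read "violates ZCW stationarity" as saying that the defining inequality fails for this specific tuple $(q^\ast,j^\ast,\pi^\ast,k^\ast)$. I will state this reading explicitly. Under this reading no continuity or symmetry argument is needed. The symmetry hypotheses on $C$ only ensure that $\mathbf p$, the sorting permutation, and hence $T^\ast$ are well defined.
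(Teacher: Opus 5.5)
Your proposal is correct and is essentially the paper's argument: a violation means the restricted minimum over $T^\ast$ lies strictly below $f(x^\ast)$, and any positive fraction of that gap can serve as $\delta$ (you take the whole gap, the paper takes half of it). One aside should be dropped: the restricted set need not contain ``origin-type points'' of $C$ (the unit simplex excludes $0$), but nonemptiness still follows, as you also say, from the failed inequality itself, or from permuting $x^\ast$ onto $T^\ast$ using type-1 symmetry.
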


\begin{proof}
	Since \(x^\ast\) violates ZCW stationarity, the restricted
	ZCW subproblem associated with the super support \(T^\ast\) strictly improves
	the objective value at \(x^\ast\). Therefore,
	\[
	\min_{\substack{z \in C \\ \mathrm{supp}(z)\subseteq T^\ast}}
	f(z)
	<
	f(x^\ast).
	\]
	Defining
	\[
	\gamma
	:=
	f(x^\ast)
	-
	\min_{\substack{z \in C \\ \mathrm{supp}(z)\subseteq T^\ast}}
	f(z)
	>0
	\]
	and
	choosing \(\delta:=\gamma/2\), we obtain
	\[
	\min_{\substack{z \in C \\ \mathrm{supp}(z)\subseteq T^\ast}}
	f(z)
	=
	f(x^\ast)-\gamma
	=
	f(x^\ast)-2\delta
	\le
	f(x^\ast)-\delta,
	\]
	which proves \eqref{e.decfdelta}.

\end{proof}

The preceding lemma is intentionally simple and a direct consequence of the
definition of ZCW stationarity. Its role is to isolate the deterministic
descent gap created by a ZCW violation. The next lemma concerns the
probabilistic mechanism that guarantees that the corresponding super support is
sampled with uniformly positive probability.

\begin{lem}[Local support-hitting bound along convergent subsequences]
	\label{lem:c0-bfs}
	Let \(x^\ast\in\Omega\) satisfy \(\|x^\ast\|_0=s\), and suppose that
	\(x^\ast\) violates  ZCW stationarity. Let \(T^\ast\) be a ZCW super support associated with this violation. For each iteration \(k\),
	let recall and reform $A^k$ from \eqref{e:ak}:  $A^k = \{\text{\textbf{S1e} is activated at iteration } k\}$. Assume that there exist a neighborhood \(\mathcal N(x^\ast)\) of \(x^\ast\) and a constant \(c_0>0\) such that, whenever \(x^k\in\mathcal N(x^\ast)\),
	the ZCW-aware reservoir injection mechanism satisfies supporting-hitting condition in \eqref{eq:support-hitting}, namely,
	\[
	\Pr(T^k=T^\ast \mid \mathcal F^{k-1},A^k)
	\ge c_0 .
	\]
	Then, for any subsequence \(\{x^{k_\ell}\}_{{k_\ell}\geq 0}\) such that
	\(x^{k_\ell}\to x^\ast\), we have, for all sufficiently large \(\ell\),
	\[
	\Pr(T^{k_\ell}=T^\ast \mid \mathcal F^{k_\ell-1},A^{k_\ell})
	\ge c_0 .
	\]
\end{lem}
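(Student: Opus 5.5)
The plan is to derive the claim directly from the definition of convergence together with the hypothesis on the neighborhood \(\mathcal N(x^\ast)\). The statement really consists of two pieces: a deterministic eventual-membership fact, and a transfer of the assumed conditional bound to the subsequence. No probabilistic limit argument is needed.

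First, since \(\mathcal N(x^\ast)\) is a neighborhood of \(x^\ast\), it contains an open ball \(B(x^\ast,\rho)\) for some \(\rho>0\). Because \(x^{k_\ell}\to x^\ast\), there is an index \(\ell_0\) such that \(\|x^{k_\ell}-x^\ast\|<\rho\) for all \(\ell\ge\ell_0\). Hence \(x^{k_\ell}\in\mathcal N(x^\ast)\) for all such \(\ell\).

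If the subsequence is random, i.e.\ \(\ell_0\) depends on the sample path, I will argue pathwise. On each outcome where the convergence holds, \(\ell_0\) exists, and the bound is asserted for all \(\ell\ge\ell_0\) on that outcome.

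Second, I recall the measurability convention from item (ii) of the sampling model: the current iterate \(x^{k}\) is \(\mathcal F^{k-1}\)-measurable. Consequently the event \(\{x^{k}\in\mathcal N(x^\ast)\}\) belongs to \(\mathcal F^{k-1}\). The hypothesis states that on this event the conditional probability \(\Pr(T^k=T^\ast\mid\mathcal F^{k-1},A^k)\) is at least \(c_0\); this is exactly the support-hitting condition \eqref{eq:support-hitting} specialized to \(T=T^\ast\). Applying it at \(k=k_\ell\) with \(\ell\ge\ell_0\) gives
\[
\Pr(T^{k_\ell}=T^\ast\mid\mathcal F^{k_\ell-1},A^{k_\ell})\ge c_0,
\]
with the same constant \(c_0\), independent of \(\ell\).

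The one delicate point, and the step I expect to need the most care, is interpreting the conditional probability as a random variable defined almost surely. I will handle it by stating the inequality on the \(\mathcal F^{k_\ell-1}\)-measurable event \(\{x^{k_\ell}\in\mathcal N(x^\ast)\}\). Since this event eventually contains every outcome on which the subsequence converges, the inequality holds almost surely for all sufficiently large \(\ell\). Uniformity of \(c_0\) is what later lets Lemma~\ref{lem.Levy} apply, since the conditional probabilities of \(E^{k_\ell}\) then sum to infinity.
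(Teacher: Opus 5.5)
Your argument is correct and is essentially the paper's proof: convergence gives \(x^{k_\ell}\in\mathcal N(x^\ast)\) for all \(\ell\ge\ell_0\), and the assumed support-hitting bound is then applied at \(k=k_\ell\) with the same constant \(c_0\). Your additional remarks are a reasonable tightening of that same argument, which the paper leaves implicit: the \(\mathcal F^{k-1}\)-measurability of \(\{x^{k}\in\mathcal N(x^\ast)\}\), and the pathwise choice of \(\ell_0\).
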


\begin{proof}
	Let \(\{x^{k_\ell}\}_{{k_\ell}\geq 0}\) be any subsequence such that
	\(x^{k_\ell}\to x^\ast\). Since \(\mathcal N(x^\ast)\) is a neighborhood of
	\(x^\ast\), there exists \(\ell_0\in\mathbb N\) such that $x^{k_\ell}\in\mathcal N(x^\ast)$ for all $\ell\ge \ell_0$. By the assumed uniform support-hitting condition, every iterate satisfying
	\(x^k\in\mathcal N(x^\ast)\) also satisfies
	\[
	\Pr(T^k=T^\ast \mid \mathcal F^{k-1},A^k)
	\ge c_0 .
	\]
	Applying this statement with \(k=k_\ell\), we obtain, for all
	\(\ell\ge\ell_0\),
	\[
	\Pr(T^{k_\ell}=T^\ast \mid \mathcal F^{k_\ell-1},A^{k_\ell})
	\ge c_0 .
	\]
	This proves the claim.
\end{proof}

The assumption above is a local uniform exploration condition. It states that,
near a nonzero-CW point \(x^\ast\), the ZCW-aware reservoir injection mechanism assigns
positive probability to the ZCW super support \(T^\ast\) responsible for the
violation. The lemma simply records that this local probability bound is inherited
along every subsequence converging to \(x^\ast\).

\subsubsection{Structural Properties of Accumulation Points}
\label{sec:conv:structure}

This subsection records structural properties of full-support accumulation points
of the sequence generated by {\tt RZCW-PSS}.  First, along any subsequence
converging to a full-support point, the active components are eventually bounded
away from zero.  Second, under the symmetry assumptions on \(C\), sparse
projections admit a locally stable support selection near such points.  These
facts are used only as local support-stability tools. These should not be confused
with the  ZCW condition, which is a support-based optimality
condition involving restricted sparse subproblems.

\begin{prop}[Uniform support separation along a convergent subsequence]
	\label{prop:nondegeneracy}
	Let $\hat x \in \Omega$ be an accumulation point of $\{x^k\}_{k\geq 0}$ with
	$\|\hat x\|_0 = s$ and support $T^\ast = I_1(\hat x)$. Then for any subsequence
	$\{x^{k_\ell}\}_{{k_\ell\geq 0}}$ such that $x^{k_\ell} \to \hat x$, there exist
	$\epsilon > 0$ and $\ell_0 \in \mathbb{N}$, depending on the subsequence, such
	that $\D\min_{i \in T^\ast} |x^{k_\ell}_i| \ge \epsilon$ for all $\ell \ge \ell_0$.
\end{prop}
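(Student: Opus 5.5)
The argument is a direct continuity (coordinatewise convergence) argument. No probabilistic or algorithmic ingredient is needed; we use only that \(x^{k_\ell}\to\hat x\) and that \(T^\ast=I_1(\hat x)\) is a finite index set on which \(\hat x\) has no zero entries.

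\textbf{Step 1: a positive margin at the limit.} Since \(\|\hat x\|_0=s\) and \(T^\ast=I_1(\hat x)\), every \(i\in T^\ast\) satisfies \(\hat x_i\neq0\). Because \(|T^\ast|=s<\infty\), the quantity
\[
m:=\min_{i\in T^\ast}|\hat x_i|
\]
is a minimum over finitely many positive numbers, hence \(m>0\). We set \(\epsilon:=m/2\).

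\textbf{Step 2: transfer the margin along the subsequence.} Convergence \(x^{k_\ell}\to\hat x\) in \(\mathbb R^n\) implies coordinatewise convergence, and in particular \(\|x^{k_\ell}-\hat x\|_\infty\to0\). Hence there is \(\ell_0\) such that \(\|x^{k_\ell}-\hat x\|_\infty\le m/2\) for all \(\ell\ge\ell_0\). For such \(\ell\) and each \(i\in T^\ast\), the reverse triangle inequality gives
\[
|x^{k_\ell}_i|\ge|\hat x_i|-|x^{k_\ell}_i-\hat x_i|\ge m-\tfrac m2=\epsilon .
\]
Taking the minimum over \(i\in T^\ast\) yields the claim. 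One may remark that, since \(\|x^{k_\ell}\|_0\le s=|T^\ast|\), this also forces \(I_1(x^{k_\ell})=T^\ast\) eventually, although the statement does not require it.

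\textbf{Main obstacle.} The only point needing care is bookkeeping. \(\epsilon\) depends only on \(\hat x\), whereas \(\ell_0\) depends on the chosen subsequence, consistent with the phrase ``depending on the subsequence.'' No genuine difficulty is expected.
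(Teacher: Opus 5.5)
Your proof is correct and takes the same route as the paper: coordinatewise convergence on the finite set $T^\ast$, where $\hat x$ has no zero entries, gives a uniform positive lower bound for all sufficiently large $\ell$. The only cosmetic difference is that you fix the explicit margin $\epsilon=\tfrac12\min_{i\in T^\ast}|\hat x_i|$ and use $\|\cdot\|_\infty$-convergence, whereas the paper chooses per-coordinate $\epsilon_i,\ell_i$ and then takes $\min_i\epsilon_i$ and $\max_i\ell_i$.
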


\begin{proof}
	Since $x^{k_\ell} \to \hat x$, for each $i \in T^\ast$, we have $x^{k_\ell}_i \to \hat x_i \neq 0$. Hence, for each $i \in T^\ast$, there exist $\epsilon_i > 0$ and $\ell_i$ such that $|x^{k_\ell}_i| \ge \epsilon_i$ for all $\ell \ge \ell_i$. Since $T^\ast$ is finite, by defining $\epsilon := \D\min_{i \in T^\ast} \epsilon_i > 0$ and $\ell_0 := \D\max_{i \in T^\ast} \ell_i$, we obtain, for all $\ell \ge \ell_0$, $\D\min_{i \in T^\ast} |x^{k_\ell}_i| \ge \epsilon$.

\end{proof}

The following result states that, near a fully sparse point \(\hat x\), one may
select a sparse projection whose super support is the fixed support
\(T=I_1(\hat x)\).  Locally, this selected projection is represented by a standard
convex projection onto the restricted set \(C_T\), see \cite[Section 3]{suppMat}.

\begin{lem}[Local stability of sparse projections under symmetric structure]
	\label{lem:proj-stability}
	Let $C \subseteq \mathbb{R}^n$ be closed, convex, and either nonnegative type-1
	symmetric or type-2 symmetric, let $\hat x \in \Omega=C \cap C_s$ satisfy $\|\hat x\|_0 = s$, and denote its
	support by $T = I_1(\hat x)$. Then there exists $\delta > 0$ such that for all $x \in \mathbb{R}^n$ with
	$\|x-\hat x\|<\delta$, there exists a projection
	$z \in \proj_\Omega(x)$ admitting \(T\) as a super support and satisfying
	\[
	z = U_T \proj_{C_T}(x_T),
	\]
	where $U_T$ is the canonical embedding from $\mathbb{R}^{s}$ into
	$\mathbb{R}^n$ and
	\[
	C_T := \{w \in \mathbb{R}^{s} : U_T w \in C\}.
	\]
	Moreover, such a selection can be made so that $z \to \hat x$ as $x \to \hat x$. In particular, for all \(x\) sufficiently close to \(\hat x\), this selected
	projection satisfies $\operatorname{supp}(z)=T$.
\end{lem}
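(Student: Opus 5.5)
The plan is to reduce the nonconvex projection onto \(\Omega=C\cap C_s\) to a single convex projection by showing that, near \(\hat x\), the support chosen by the Beck--Hallak sparse projection rule is forced to be \(T\). The structural fact from \cite{Beck2016} (recalled in \cite[Section~3]{suppMat}) that I will invoke is the following. For \(C\) nonnegative type-1 symmetric or type-2 symmetric, and any \(x\in\mathbb R^n\), let \(S\) be the set of indices of the \(s\) largest components of \(\mathbf p(x)\), i.e.\ \(S=\{\pi(1),\dots,\pi(s)\}\) for some \(\pi\in\widetilde{\mathfrak S}(\mathbf p(x))\). Then some element of \(\proj_\Omega(x)\) has \(S\) as a super support and equals \(U_S\proj_{C_S}(x_S)\). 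So it suffices to show that \(T\) is the unique set of the \(s\) largest components of \(\mathbf p(x)\) for all \(x\) close to \(\hat x\).

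First, I separate the components of \(\mathbf p(\hat x)\) on and off \(T\). In the type-2 case, \(\mathbf p_i(\hat x)=|\hat x_i|>0\) for \(i\in T\), and \(\mathbf p_i(\hat x)=0\) for \(i\notin T\). In the nonnegative type-1 case, \(\hat x\ge0\), so \(\hat x_i>0\) on \(T\) and \(\hat x_i=0\) off \(T\). Set
\[
\mu:=\min_{i\in T}\mathbf p_i(\hat x)>0,\qquad \delta:=\mu/2 .
\]
Since \(\mathbf p\) is \(1\)-Lipschitz componentwise, \(\|x-\hat x\|<\delta\) implies \(\mathbf p_i(x)>\mu/2\) for \(i\in T\) and \(\mathbf p_i(x)<\mu/2\) for \(i\notin T\). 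Because \(|T|=s\), every sorting permutation of \(\mathbf p(x)\) places exactly \(T\) in its first \(s\) positions, with no ties across the cut. The Beck--Hallak result then yields \(z=U_T\proj_{C_T}(x_T)\in\proj_\Omega(x)\) with \(\operatorname{supp}(z)\subseteq T\). Here \(C_T\) is closed and convex as the preimage of \(C\) under a linear map, so the convex projection is single-valued.

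Second, continuity. Since \(\hat x\in C\) and \(\operatorname{supp}(\hat x)=T\), we have \(\hat x_T\in C_T\), so \(\proj_{C_T}(\hat x_T)=\hat x_T\). Nonexpansiveness of convex projections gives
\[
\|z-\hat x\|=\|\proj_{C_T}(x_T)-\proj_{C_T}(\hat x_T)\|\le\|x_T-\hat x_T\|\le\|x-\hat x\|,
\]
so \(z\to\hat x\) as \(x\to\hat x\). Finally, arguing as in Proposition~\ref{prop:nondegeneracy}, shrink \(\delta\) if necessary to \(\delta\le\min_{i\in T}|\hat x_i|/2\). Then \(|z_i|>0\) for every \(i\in T\), and together with \(\operatorname{supp}(z)\subseteq T\) this gives \(\operatorname{supp}(z)=T\).

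The main obstacle I expect is stating the Beck--Hallak sorting characterization precisely in both symmetry classes. In particular, one must check that in the nonnegative type-1 case the relevant ordering is by signed values \(x_i\), not \(|x_i|\): nearby \(x\) may have small negative entries off \(T\), and these must still rank below the entries on \(T\). They do, because those entries exceed \(\mu/2>0\). One must also check that ties among components inside \(T\) are harmless, which holds because only the set of the top \(s\) indices matters. If the cited result is only available for the ``\(\pi(1),\dots,\pi(s)\)'' support of a specific sorting permutation, the strict gap at \(\mu/2\) guarantees that every admissible choice yields \(T\).
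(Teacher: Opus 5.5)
Your proof is correct and follows essentially the same route as the paper. In both arguments, a uniform gap in $\mathbf p(x)$ across $T$ near $\hat x$ makes $T$ the top-$s$ index set. The Beck--Hallak sparse-projection characterization then gives $z=U_T\proj_{C_T}(x_T)\in\proj_\Omega(x)$, and nonexpansiveness of $\proj_{C_T}$ together with $\proj_{C_T}(\hat x_T)=\hat x_T$ yields $z\to\hat x$ and hence $\operatorname{supp}(z)=T$.

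Your handling of the nonnegative type-1 case through the signed values $\mathbf p(x)=x$ is slightly more careful than the paper's wording, which states the separation in terms of $|x_i|$ only. You also obtain the explicit bound $\|z-\hat x\|\le\|x-\hat x\|$.
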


\begin{proof}
	We have \(T=I_1(\hat x)\), with \(|T|=s\).
	
	\textbf{(i) Support separation.}
	Since \(\hat x_i\neq0\) for all \(i\in T\), we set $\epsilon
	=
	\frac12\min_{i\in T}|\hat x_i|>0$. Then $|\hat x_i|\ge 2\epsilon$ for all $i\in T$ and $\hat x_j=0$ for all $j\notin T$.
	
	\textbf{(ii) Stability of the ordering.}
	Let \(x\in\mathbb R^n\) satisfy \(\|x-\hat x\|_\infty<\epsilon\). Then
	\[
	|x_i|\ge \epsilon
	\quad \forall i\in T,
	\qquad
	|x_j|<\epsilon
	\quad \forall j\notin T.
	\]
	So, for \(x\) sufficiently close to \(\hat x\), in the type-2 symmetric case, the indices in \(T\) are precisely the
	indices of the \(s\) largest absolute components of \(x\) and
	in the nonnegative type-1 case 
	the indices in \(T\) are precisely the indices of the \(s\) largest components
	of \(x\).  Hence, in either symmetry class, \(T\) is selected by the corresponding
	ordering rule used in the sparse projection characterization.
	Therefore, by the sparse projection structure for nonnegative type-1 and
	type-2 symmetric sets, there exists a projection \(z\in\proj_\Omega(x)\)
	admitting \(T\) as a super support and satisfying, see Section ~3 of \cite{suppMat}, 
	\begin{equation}\label{eq:selproj}
		z
		=
		U_T \proj_{C_T}(x_T).   
	\end{equation}
	
	\textbf{(iii) Continuity of the selected projection.}
	Since \(C_T\) is closed and convex, the projection operator
	\(\proj_{C_T}\) is nonexpansive and hence continuous. Therefore,
	\[
	\proj_{C_T}(x_T)
	\to
	\proj_{C_T}(\hat x_T)
	\quad \text{as } x\to\hat x.
	\]
	Because \(\hat x\in C\) and \(\operatorname{supp}(\hat x)=T\), we have
	\(\hat x_T\in C_T\), so that $\proj_{C_T}(\hat x_T)=\hat x_T$. Thus, for the selected projection in \eqref{eq:selproj}, 
	we obtain $z\to U_T\hat x_T=\hat x$. Since every component of \(\hat x_T\) is nonzero and \(z\to\hat x\), it follows
	that, for all \(x\) sufficiently close to \(\hat x\), $\operatorname{supp}(z)=T$. Finally, since \(\|x-\hat x\|_\infty\le \|x-\hat x\|\), choosing \(\delta>0\)
	sufficiently small gives the desired result.

\end{proof}

\begin{rem}[Global selection vs.\ local stability]
	\label{rem:global-vs-local}
	The sparse projection results for symmetric sets provide a global characterization
	of admissible supports. Depending on the symmetry class, a projection can be
	constructed from supports determined by the ordering of \(x\) or \(|x|\).  In
	contrast, Lemma~\ref{lem:proj-stability} establishes a local stability property.
	Around a point \(\hat x\) with full support \(|I_1(\hat x)|=s\), one may select a
	projection whose super support remains fixed and equal to \(I_1(\hat x)\) under
	sufficiently small perturbations of the input.  Moreover, this selected projection
	reduces locally to a standard convex projection onto \(C_T\).
\end{rem}

\begin{thm}[Global minimizers are zero-coordinatewise stationary]
	\label{thm:locmin-ZCW}
	Let \(C\subseteq\mathbb{R}^n\) be closed, convex, and either nonnegative
	type-1 symmetric or type-2 symmetric, and let
	\(\Omega=C\cap C_s\). If \(\hat x\in\Omega\) is a global minimizer of \(f\)
	over \(\Omega\), then \(\hat x\) is a  ZCW stationary point
	of~\eqref{eq:prodef}.
\end{thm}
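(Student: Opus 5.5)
The argument will be a direct comparison of feasible sets: the ZCW condition asks that $f(\hat x)$ not exceed the minimum of $f$ over $\{y\in C:\ I_1(y)\subseteq T^*\}$. So we will show that this restricted feasible set is contained in $\Omega$. Global optimality of $\hat x$ over $\Omega$ then gives the inequality at once.

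First we check that the definition applies. ZCW stationarity is stated for basic feasible points, and the indices $q^*$, $j^*$ and the super support $T^*$ in \eqref{eq:ijdef}--\eqref{eq:supsubzcw} are built from such a point. So we first confirm that a global minimizer is basic feasible, i.e.\ that the gradient vanishes on the relevant coordinates in the sense of Beck--Hallak. We will argue that $\hat x$ minimizes $f$ over $\{y\in C:\ I_1(y)\subseteq \mathcal L\}$ for any super support $\mathcal L\supseteq I_1(\hat x)$ with $|\mathcal L|=s$, because every such $y$ has $\|y\|_0\le s$ and hence lies in $\Omega$. The restricted set is convex, being the intersection of $C$ with a coordinate subspace. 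So $\hat x$ is a global, hence stationary, minimizer of a smooth function over a convex set, which gives the restricted first-order conditions. We will cite the standard fact from \cite{Beck2016}, summarized in \cite{suppMat}, that optimality over each super support is exactly basic feasibility. Should the precise basic-feasibility definition need more care (e.g.\ the unsaturated case, where every coordinate must be tested), the same argument works with $\mathcal L$ ranging over all size-$s$ supersets.

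Second, we fix any admissible choice of $q^*$, $j^*$, $\pi\in\widetilde{\mathfrak S}(-\mathbf p(-g(\hat x)))$ and cutoff $k$ with $|T^*|=s$. Let $y\in C$ satisfy $I_1(y)\subseteq T^*$. Then $\|y\|_0\le |T^*|=s$, so $y\in C\cap C_s=\Omega$. Global optimality gives $f(\hat x)\le f(y)$. Taking the infimum over all such $y$ gives $f(\hat x)\le \min\{f(y): y\in C,\ I_1(y)\subseteq T^*\}$. The inequality holds trivially whether or not the minimum is attained (read as an infimum if necessary). Since the choices were arbitrary, the condition holds for every admissible $T^*$, and $\hat x$ is ZCW stationary.

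The only real obstacle is the preliminary requirement that $\hat x$ be basic feasible, since the ZCW definition is stated only for such points. It depends on the exact basic-feasibility definition in \cite{suppMat}. The fallback is to invoke the known Beck--Hallak implication that global optimality implies basic feasibility. The second step is purely set inclusion, using $|T^*|=s$.
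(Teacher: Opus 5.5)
Your proposal is correct and follows the paper's proof essentially step for step. The paper first establishes basic feasibility: $\hat x$ globally minimizes $f$ over $\{y\in C: I_1(y)\subseteq\mathcal L\}$ for every super support $\mathcal L$, and the first-order (projected fixed-point) condition then holds on the convex set $C_{\mathcal L}$. It then obtains the ZCW inequality from the inclusion $\{y\in C: I_1(y)\subseteq T^*\}\subseteq\Omega$, which follows from $|T^*|=s$, combined with global optimality. Your remark that the inequality still holds when the restricted minimum is read as an infimum is a harmless refinement that the paper does not make explicit.
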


\begin{proof}
	Since \(\hat x\) is a global minimizer of \(f\) over
	\(\Omega=C\cap C_s\), we have $f(\hat x)\le f(y)$ for all $y\in \Omega$. We first show that \(\hat x\) is a basic feasible point. Let
	\(\mathcal L\subseteq[n]\) be any super support of \(\hat x\), that is, $I_1(\hat x)\subseteq \mathcal L$ and $|\mathcal L|=s$. Every point \(y\in C\) satisfying \(I_1(y)\subseteq \mathcal L\) belongs to
	\(\Omega\), because \(|\mathcal L|=s\). Hence,
	\[
	f(\hat x)\le f(y),
	\qquad
	\forall y\in C
	\quad\text{with}\quad
	I_1(y)\subseteq \mathcal L .
	\]
	Since \(I_1(\hat x)\subseteq\mathcal L\), then
	\(\hat x=U_{\mathcal L}\hat x_{\mathcal L}\), for a proper canonical embedding. Therefore,
	\[
	f(U_{\mathcal L}\hat x_{\mathcal L})
	\le
	f(U_{\mathcal L}w),
	\qquad
	\forall w\in C_{\mathcal L}.
	\]
	Thus, \(\hat x_{\mathcal L}\) is a global minimizer of the restricted smooth
	problem
	\[
	\min\{f(U_{\mathcal L}w): w\in C_{\mathcal L}\}.
	\]
	Since \(C_{\mathcal L}\) is closed and convex, the standard first-order necessary
	condition for smooth optimization over a convex set gives $g_{\mathcal L}(\hat x)^{\top}
	(w-\hat x_{\mathcal L})\ge 0$ for all $w\in C_{\mathcal L}$. Equivalently, for every \(L>0\), $\hat x_{\mathcal L}
	=
	\proj_{C_{\mathcal L}}
	\left(
	\hat x_{\mathcal L}
	-
	(1/L) g_{\mathcal L}(\hat x)
	\right)$. Since \(\mathcal L\) was arbitrary, \(\hat x\) is a basic feasible point, see \cite{Beck2016} and \cite[Section 4]{suppMat}.
	
	It remains to verify the ZCW condition. Let \(T\) be any ZCW
	super support generated by the  ZCW construction at
	\(\hat x\). Then every \(y\in C\) satisfying $I_1(y)\subseteq T$ belongs to \(\Omega\), because \(|T|=s\). Therefore, by global
	optimality of \(\hat x\),
	\[
	f(\hat x)\le f(y),
	\qquad
	\forall y\in C
	\quad\text{with}\quad
	I_1(y)\subseteq T .
	\]
	Taking the minimum over this restricted feasible set gives
	\[
	f(\hat x)
	\le
	\min\left\{
	f(y):
	y\in C,\;
	I_1(y)\subseteq T
	\right\}.
	\]
	Thus, no ZCW restricted super support subproblem improves \(\hat x\).
	Since \(\hat x\) is also basic feasible, \(\hat x\) is ZCW stationary.

\end{proof}

\subsubsection{Global Convergence Analysis}
\label{sec:conv:global}

\bfi{Consistency between algorithm and analysis:} The convergence analysis is built upon the sufficient descent property established in \cite[Lemma~1]{suppMat}, which is enforced through the acceptance rule. At each iteration, the next iterate is selected from a
finite candidate set $\Omega^k$ and achieves near-minimal objective value
within this set. The candidate set $\Omega^k$ is generated by the algorithmic components
in Steps~\textbf{S1a}--\textbf{S1e}. Projected subspace
steps and coordinates provide algorithmic descent candidates, while the simple-CW and
ZCW mechanisms are constructed according to the 
support-based conditions. In particular, Step~\textbf{S1e} generates
restricted sparse subproblems over ZCW super supports.

The randomized reservoir injection step (Step~\textbf{S1e}) is essential for the convergence analysis. It assigns a uniformly positive probability to the generation of every super support associated with a violation of  ZCW stationarity. As a result, each corresponding restricted descent subproblem is selected infinitely often almost surely, by an application of L'evy's conditional Borel-Cantelli lemma (Lemma~\ref{lem.Levy}).

Moreover, the acceptance criterion ensures that whenever a sampled subproblem admits a sufficiently improving solution, the resulting candidate point is accepted, up to the prescribed subproblem accuracy. Together, these properties guarantee that directions revealing violations of  ZCW stationarity cannot be neglected indefinitely.

Additional implementation components, such as approximate subproblems,
reservoir pruning, and warm-start strategies, only affect how candidate points
are generated and do not alter the admissible restricted descent subproblems.
Therefore, they do not affect the validity of the convergence analysis.

The following lemma shows that if a fixed support $T$ is strictly suboptimal at
$x^\ast$ by a margin $2\delta$, then this suboptimality persists uniformly in a
neighborhood of $x^\ast$, up to a loss of at most $\delta$ due to continuity.

\begin{lem}[Uniform descent on a fixed support]
	\label{lem:uniform-descent}
	Let $T \subseteq [n]$ and define
	\[
	\phi := \min \{ f(z) : z \in C,\ I_1(z) \subseteq T \}.
	\]
	Suppose that for some $x^\ast \in \Omega$, $\phi \le f(x^\ast) - 2\delta$ for some $\delta > 0$.
	Then there exists a neighborhood $\mathcal N$ of $x^\ast$ such that for all
	$x \in \mathcal N \cap \Omega$, $\phi \le f(x) - \delta$.
\end{lem}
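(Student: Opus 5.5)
The plan is to use only the continuity of \(f\) at \(x^\ast\). The quantity \(\phi\) is a fixed number determined by \(T\) and \(C\); it does not depend on the point at which we compare. So it suffices to find a neighborhood on which \(f(x)\ge f(x^\ast)-\delta\). Then, for every \(x\in\mathcal N\cap\Omega\),
\[
\phi\le f(x^\ast)-2\delta = \bigl(f(x^\ast)-\delta\bigr)-\delta\le f(x)-\delta .
\]

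Concretely, Assumption~\ref{ass:lip} makes \(f\) continuously differentiable and hence continuous on \(\mathbb R^n\). Therefore there is \(r>0\) such that \(|f(x)-f(x^\ast)|<\delta\) whenever \(\|x-x^\ast\|<r\). Set \(\mathcal N:=\{x:\|x-x^\ast\|<r\}\). For \(x\in\mathcal N\cap\Omega\) this gives \(f(x)>f(x^\ast)-\delta\), and the chain of inequalities above completes the argument. Intersecting with \(\Omega\) is harmless; it only reflects how the lemma is used along the iterates.

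One point deserves care: \(\phi\) might be \(-\infty\), or a mere infimum. The hypothesis \(\phi\le f(x^\ast)-2\delta\) already covers both cases, since \(\phi\) is compared, not evaluated. Attainment, as guaranteed by the restricted-subproblem solvability assumption (iv), is not needed.

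I expect no real obstacle; this is a one-line continuity argument. The only subtlety is to state clearly that the neighborhood depends on \(x^\ast\) and \(\delta\) but not on \(T\) beyond the value \(\phi\). The same \(\mathcal N\) therefore works for every support whose restricted value is at least \(2\delta\) below \(f(x^\ast)\). This uniformity is what Theorem~\ref{thm:RZCW-PSS-ZCW} will later combine with Lemma~\ref{lem:activation-to-swap}.
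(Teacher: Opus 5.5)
Your proposal is correct and is essentially the paper's proof: continuity of $f$ at $x^\ast$ with tolerance $\delta$ gives a neighborhood on which $f(x)\ge f(x^\ast)-\delta$, and chaining this with $\phi\le f(x^\ast)-2\delta$ yields $\phi\le f(x)-\delta$. Your side remarks are accurate points that the paper leaves implicit: attainment of $\phi$ is not needed, and $\mathcal N$ depends only on $x^\ast$ and $\delta$.
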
 

\begin{proof}
	By continuity of $f$ at $x^\ast$, for $\varepsilon = \delta$ there exists a
	neighborhood $\mathcal N$ of $x^\ast$ such that for all $x \in \mathcal N$, $|f(x) - f(x^\ast)| < \delta$, which implies $f(x) \ge f(x^\ast) - \delta$. Combining this with the assumption $\phi \le f(x^\ast) - 2\delta$, we obtain $\phi \le f(x^\ast) - 2\delta \le f(x) - \delta$, which proves the claim. 
\end{proof}

\begin{thm}[Subsequential support stability]
	\label{thm:support-identification}
	Let $\{x^k\}_{k\geq 0}$ be the sequence generated by {\tt RZCW-PSS}, and let
	$x^\ast$ be an accumulation point satisfying $\|x^\ast\|_0=s$. If
	$\{x^{k_\ell}\}_{{k_\ell}\geq 0}$ is any subsequence such that $x^{k_\ell}\to x^\ast$, then
	there exists $\ell_0\in\mathbb N$ such that
	\begin{equation}\label{e.fixedsupport}
		I_1(x^{k_\ell})=I_1(x^\ast),
		\qquad
		\forall \ell\ge \ell_0 .
	\end{equation}
\end{thm}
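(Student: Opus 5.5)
The plan is to obtain the support identity \eqref{e.fixedsupport} from two inclusions: first $I_1(x^\ast)\subseteq I_1(x^{k_\ell})$ for all large $\ell$, and then equality by counting, using feasibility of the iterates.

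Fix a subsequence $x^{k_\ell}\to x^\ast$ and set $T:=I_1(x^\ast)$, so $|T|=s$. Proposition~\ref{prop:nondegeneracy}, applied with $\hat x=x^\ast$, gives $\epsilon>0$ and $\ell_0$ such that $\min_{i\in T}|x^{k_\ell}_i|\ge\epsilon>0$ for all $\ell\ge\ell_0$. Hence every $i\in T$ is active at $x^{k_\ell}$, that is, $T\subseteq I_1(x^{k_\ell})$ for $\ell\ge\ell_0$.

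For the reverse inclusion I use that every iterate is feasible. Each $x^{k+1}$ is chosen from a candidate set $\Omega^k\subseteq\Omega=C\cap C_s$ (Step~\textbf{S1g}), and $x^0$ is projected onto $\Omega$ in Step~\textbf{S0}. Therefore $\|x^{k_\ell}\|_0\le s$, which means $|I_1(x^{k_\ell})|\le s=|T|$. Combined with $T\subseteq I_1(x^{k_\ell})$, this forces $I_1(x^{k_\ell})=T$ for all $\ell\ge\ell_0$, which is \eqref{e.fixedsupport}.

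No deep obstacle arises; the argument needs neither Lemma~\ref{lem:proj-stability} nor any algorithmic descent property. The one point requiring care is confirming that all iterates lie in $\Omega$ regardless of which step (subspace projection, swap, ZCW injection, or global refinement) produced them, since only then does the cardinality bound hold. This follows from the candidate-set construction and the standing requirement $\Omega^k\subseteq\Omega$ in Assumption~\ref{ass:subproblem}. Once feasibility is settled, the result is essentially an $\ell_0$ lower-semicontinuity argument combined with the cap $\|x\|_0\le s$.
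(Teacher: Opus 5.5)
Your proposal is correct and follows the paper's proof essentially verbatim: Proposition~\ref{prop:nondegeneracy} gives $I_1(x^\ast)\subseteq I_1(x^{k_\ell})$ for large $\ell$, and the cardinality bound $\|x^{k_\ell}\|_0\le s=|I_1(x^\ast)|$ forces equality. Your explicit remark that feasibility of the iterates follows from the candidate-set construction $\Omega^k\subseteq C\cap C_s$ is a point the paper uses without comment.
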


\begin{proof}
	Let $T^\ast:=I_1(x^\ast)$. Since $\|x^\ast\|_0=s$, we have $|T^\ast|=s$ and
	$x^\ast_i\neq0$ for all $i\in T^\ast$.
	
	By Proposition~\ref{prop:nondegeneracy}, there exist $\epsilon>0$ and
	$\ell_0\in\mathbb N$ such that, for all $\ell\ge\ell_0$, $\min_{i\in T^\ast}|x^{k_\ell}_i|\ge \epsilon$. Hence, $T^\ast\subseteq I_1(x^{k_\ell})$ for all $\ell\ge\ell_0$. Since each iterate satisfies $\|x^{k_\ell}\|_0\le s$ and $|T^\ast|=s$, it follows that $I_1(x^{k_\ell})=T^\ast=I_1(x^\ast)$ for all $\ell\ge\ell_0$.

\end{proof}

\begin{thm}
	\textbf{(Almost sure ZCW stationarity of {\tt RZCW-PSS}).}
	\label{thm:RZCW-PSS-ZCW}
	
	Assume that Assumptions~\ref{ass:bounded-sublevel}--\ref{ass:subproblem} hold.
	In particular, the iterates satisfy the inexactness condition
	\eqref{eq:accept-no-safeguard}. Moreover, suppose that the {\tt RZCW-PSS}
	algorithm is implemented so that, at every iteration $k$ with $\|x^k\|_0=s$,
	the ZCW-aware reservoir injection mechanism~\textbf{S1e} is activated with probability
	$p^k \in (0,1)$ satisfying \eqref{eq:infpk}, and \eqref{eq:pklb}.
	
	Assume further that, for every full-support accumulation point $x^\ast$ that is
	not  ZCW stationary, and for every ZCW super support
	$T^\ast$ associated with a violation at $x^\ast$, the injection mechanism
	satisfies the uniform support-hitting condition
	\[
	\Pr(T^k=T^\ast \mid \mathcal F^{k-1}, A^k)
	\ge c_0
	\]
	for some constant $c_0>0$, whenever $x^k$ is sufficiently close to $x^\ast$. Assume also that the restricted ZCW subproblem solvability condition stated in
	Subsection~\ref{sec:res-cw} holds, and that the restricted ZCW subproblem
	errors in \eqref{eq:defxik_final_Tk} satisfy \(\varepsilon_T^k\to0\).
	
	Then every full-support accumulation point of the iterates generated by
	{\tt RZCW-PSS} is a  ZCW stationary point almost surely.
\end{thm}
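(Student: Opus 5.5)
The argument is by contradiction, on the event where some full-support accumulation point fails to be ZCW stationary; we show this event has probability zero. The first step is the deterministic descent bookkeeping. By Assumption~\ref{ass:bounded-sublevel}, all iterates lie in the compact set $\mathcal L(x^0)$ and $f$ is bounded below there. By Assumption~\ref{ass:subproblem} and the descent lemma of \cite[Lemma~1]{suppMat}, the values $f(x^k)$ converge to some $f^\ast$, $\delta^k\to0$, and $\xi^k\to0$. Here $\delta^k\to0$ is the key quantity: it says that eventually no candidate in $\Omega^k$ improves $f(x^k)$ by a fixed margin.

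Next, fix a full-support accumulation point $x^\ast$ that is not ZCW stationary, with violating super support $T^\ast$, and a subsequence $x^{k_\ell}\to x^\ast$. Lemma~\ref{lem:activation-to-swap} provides $\delta>0$ with $\min\{f(z):z\in C,\ \operatorname{supp}(z)\subseteq T^\ast\}\le f(x^\ast)-2\delta$; take $\gamma/2$ as $2\delta$. Lemma~\ref{lem:uniform-descent} then gives a neighborhood $\mathcal N$ on which this restricted minimum is at most $f(x)-\delta$. Shrink $\mathcal N$ so that the support-hitting bound $c_0$ holds on it. By Theorem~\ref{thm:support-identification}, we may also assume $\|x^k\|_0=s$ for $x^k\in\mathcal N$, so Step~\textbf{S1e} is eligible at those iterations.

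The deterministic conclusion is this. On the event $E^k=A^k\cap\{T^k=T^\ast\}$ with $x^k\in\mathcal N$, the probe satisfies
\[
f(r^k)\le f(x^k)-\delta+\varepsilon_T^k .
\]
Since $r^k\in\Omega^k$, this gives $\delta^k\ge\delta-\varepsilon_T^k\ge\delta/2$ once $\varepsilon_T^k\le\delta/2$. Because $\delta^k\to0$, $E^k$ can occur at only finitely many iterations with $x^k\in\mathcal N$.

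The probabilistic step is the main obstacle. The set of iterations with $x^k\in\mathcal N$ is random, so Lemma~\ref{lem.Levy} must be applied to events adapted to the filtration, not along a fixed subsequence. I would define $B^k:=\{x^k\in\mathcal N\}$; this event lies in $\mathcal F^{k-1}$, since $x^k$ is determined before iteration $k$'s fresh randomness. Then set $G^k:=B^k\cap E^k\in\mathcal F^k$. By the factorization and \eqref{eq:pklb},
\[
\Pr(G^k\mid\mathcal F^{k-1})=\mathbf 1_{B^k}\,p^k\,\Pr(T^k=T^\ast\mid\mathcal F^{k-1},A^k)\ge \mathbf 1_{B^k}\,p_{\rm inj}\,c_0 .
\]
On the event that $x^\ast$ is an accumulation point, $B^k$ occurs infinitely often, so the conditional sum diverges. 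The event form of Lévy's lemma (the conditional Borel--Cantelli statement in \cite[Section~12.15]{williams1991probability}, in which $\{\sum\Pr(G^k\mid\mathcal F^{k-1})=\infty\}$ and $\{G^k\text{ i.o.}\}$ agree almost surely) then yields $G^k$ infinitely often almost surely on that event. This contradicts the finiteness obtained above.

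The remaining difficulty is that $x^\ast$, $T^\ast$ and $\mathcal N$ are random. I would handle this with a countable reduction. Cover the candidates by countably many rational balls $\mathcal N$, and take $T^\ast$ from the finitely many subsets of $[n]$ and $\delta$ from $\{1/m\}$. For each fixed triple, the event ``the iterates visit $\mathcal N$ infinitely often, and on $\mathcal N$ the $T^\ast$-restricted minimum lies $\delta$ below $f$, with hitting bound $c_0$'' has probability zero by the argument above. A countable union of null sets is null, which gives the almost-sure statement of Theorem~\ref{thm:RZCW-PSS-ZCW}.
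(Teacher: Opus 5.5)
Your proof is correct and has the same skeleton as the paper's. Lemmas~\ref{lem:activation-to-swap} and~\ref{lem:uniform-descent} give a uniform restricted-descent margin on a neighborhood of $x^\ast$. The hit event has conditional probability at least $\mathbf 1_{B^k}p_{\rm inj}c_0$. L\'evy's conditional Borel--Cantelli lemma makes hits occur infinitely often, which contradicts vanishing decrease. The differences are in the bookkeeping, and they work in your favor.

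First, you contradict $\delta^k\to0$ directly. This is the candidate-set gap, which the paper notes follows from Assumption~\ref{ass:subproblem}, so you never need $\xi^k\to0$. The paper instead contradicts $\Delta^k\to0$ and has to absorb $\xi^k$.

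Second, and more substantively, the paper fixes a sample path, builds $x^\ast$, $T^\ast$ and $\mathcal N$ from it, and then invokes Lemma~\ref{lem.Levy}. That lemma's hypothesis is almost-sure divergence of $\sum_k\Pr(F^k\mid\mathcal F^{k-1})$, but this divergence holds only on the random event $\{x^k\in\mathcal N\ \text{i.o.}\}$. Moreover, the events $F^k$ are defined through path-dependent objects, so the lemma as stated does not literally apply. Your route repairs this. You use the event form of L\'evy's theorem: the sets $\{\sum_k\Pr(G^k\mid\mathcal F^{k-1})=\infty\}$ and $\{G^k\ \text{i.o.}\}$ agree almost surely. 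Combined with your countable reduction over rational balls, subsets $T\subseteq[n]$ and margins $1/m$, this is exactly what makes the step rigorous. The price is a slightly longer argument, and the support-hitting hypothesis must be read as a deterministic property of neighborhoods of every non-ZCW full-support point of $\Omega$, not only of the random accumulation points.

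Two small touch-ups. If $c_0$ may depend on $(x^\ast,T^\ast)$, index it over a countable set as well, e.g.\ $c_0\in\{1/m'\}$. Also, the fact that $\|x\|_0=s$ for all $x\in\mathcal N\cap\Omega$ does not follow from Theorem~\ref{thm:support-identification} itself, which is stated only along convergent subsequences. It follows from the argument of Proposition~\ref{prop:nondegeneracy} applied on an $\ell_\infty$-ball of radius $\min_{i\in I_1(x^\ast)}|x^\ast_i|$ around $x^\ast$.
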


\begin{proof}
	By construction, all {\tt RZCW-PSS} iterates belong to the feasible set
	\(\Omega=C\cap C_s\).
	
	\textbf{(i) Convergence of the objective and vanishing actual change.}
	By \cite[Lemma 1]{suppMat}, the sequence $\{f(x^k)\}_{k\ge0}$ converges and
	the actual change in \eqref{e.defDeltak}
	satisfies $\Delta^k\to0$.
	In particular, no update yielding a fixed positive decrease can occur
	infinitely often.
	
	\textbf{(ii) Boundedness and accumulation points.}
	By the monotone acceptance rule, \(f(x^k)\le f(x^0)\) for all \(k\). Hence all
	accepted iterates remain in the initial sublevel set $\mathcal L(x^0)$.
	Assumption~\ref{ass:bounded-sublevel} implies that the sequence $\{x^k\}_{k\geq 0}$ is
	bounded. Hence, it admits accumulation points.
	
	Let \(\Xi_0\) denote the probability-one event on which the conclusions of
	\cite[Lemma 1]{suppMat}, mentioned in \bfi{(i)}, hold. In particular, for every sample path in
	\(\Xi_0\), $\Delta^k\to0$. We fix an arbitrary sample path in \(\Xi_0\). The conditional
	Borel-Cantelli argument in Lemma~1 will be applied below, after the events \(F^k\) have
	been defined and after the divergence of the corresponding conditional
	probability sum has been verified.
	
	Let \(x^\ast\) be an arbitrary full-support accumulation point on this sample
	path, and let \(\{x^{k_\ell}\}_{k_\ell\geq 0}\) be a subsequence such that $x^{k_\ell}\to x^\ast$ with $\|x^\ast\|_0=s$. By Theorem~\ref{thm:support-identification}, after passing to the tail of the
	subsequence if necessary, we have $I_1(x^{k_\ell})=I_1(x^\ast)$ for all sufficiently large $\ell$. In particular, $\|x^{k_\ell}\|_0=s$ for all sufficiently large $\ell$.
	
	\textbf{(iii) Contradiction hypothesis.}
	Assume, by contradiction, that $x^\ast$ is not  ZCW stationary.
	Then, by Lemma~\ref{lem:activation-to-swap}, there exists a ZCW
	super support $T^\ast$ and a constant $\delta_0>0$ such that
	\[
	\min_{\substack{z\in C\\ I_1(z)\subseteq T^\ast}}
	f(z)
	\le
	f(x^\ast)-2\delta_0 .
	\]
	Applying Lemma~\ref{lem:uniform-descent}, there exists a neighborhood
	$\mathcal N$ of $x^\ast$ such that for all $x\in\mathcal N\cap\Omega$,
	\begin{equation}\label{e:unidec}
		\min_{\substack{z\in C\\ I_1(z)\subseteq T^\ast}}
		f(z)
		\le
		f(x)-\delta_0.
	\end{equation}
	Since \(x^{k_\ell}\to x^\ast\), there exists \(\ell_0\) such that $x^{k_\ell}\in\mathcal N$ and  $\|x^{k_\ell}\|_0=s$ for all $\ell\ge\ell_0$. Thus, along the relevant subsequence, the ZCW-aware reservoir injection mechanism is
	eligible to be activated at all sufficiently large iterates.
	
	\textbf{(iv) Almost-sure detection of the violating super support.}
	We use the convention that \(\mathcal F^{k-1}\) represents all information
	available immediately before the fresh random choices of iteration \(k\) are
	made. In particular, the current iterate \(x^k\) has already been computed from
	the previous iterations, and hence \(x^k\) is \(\mathcal F^{k-1}\)-measurable. Thus, any event determined only by the current iterate is already known at time
	\(\mathcal F^{k-1}\). Therefore, for any Borel set \(B\subseteq\mathbb R^n\),
	\[
	(x^k)^{-1}(B)
	=
	\{\omega : x^k(\omega)\in B\}
	\in\mathcal F^{k-1}.
	\]
	Taking \(B=\mathcal N\), where \(\mathcal N\) is a fixed neighborhood of \(x^\ast\), yields
	\[
	\{x^k\in\mathcal N\}
	=
	(x^k)^{-1}(\mathcal N)\in\mathcal F^{k-1}.
	\] Similarly, since $\{x\in\mathbb R^n:\|x\|_0=s\}$ is a Borel set, the following event is also \(\mathcal F^{k-1}\)-measurable,
	\[
	\{\|x^k\|_0=s\}
	=
	(x^k)^{-1}\bigl(\{x\in\mathbb R^n:\|x\|_0=s\}\bigr)\in\mathcal F^{k-1}.
	\]
	Hence the event
	\(\{x^k\in\mathcal N\}\cap\{\|x^k\|_0=s\}\) is
	\(\mathcal F^{k-1}\)-measurable. 
	Define the neighborhood hit event
	\[
	F^k :=
	\{x^k\in\mathcal N\}
	\cap
	\{\|x^k\|_0=s\}
	\cap A^k
	\cap \{T^k=T^\ast\}.
	\]
	Let $B^k:=\{x^k\in\mathcal N,\ \|x^k\|_0=s\}$. Since \(B^k\in\mathcal F^{k-1}\), we have
	\[
	\Pr(F^k\mid\mathcal F^{k-1})
	=
	\mathbf 1_{B^k}
	\Pr\!\bigl(A^k\cap\{T^k=T^\ast\}\mid\mathcal F^{k-1}\bigr).
	\]
	By the conditional chain rule, the activation lower bound \eqref{eq:pklb}, and the uniform
	support-hitting condition \eqref{eq:support-hitting}, whenever \(x^k\in\mathcal N\) and
	\(\|x^k\|_0=s\),
	\[
	\Pr\!\bigl(A^k\cap\{T^k=T^\ast\}\mid\mathcal F^{k-1}\bigr)
	=
	\Pr(A^k\mid\mathcal F^{k-1})
	\Pr(T^k=T^\ast\mid\mathcal F^{k-1},A^k)
	\ge
	p_{\mathrm{inj}}c_0.
	\]
	Therefore,
	\begin{equation}\label{eq:prob-factorization}
		\Pr(F^k\mid\mathcal F^{k-1})
		\ge
		\mathbf 1_{B^k}\,
		p_{\mathrm{inj}}c_0 .
	\end{equation}
	Since \(x^\ast\) is a full-support accumulation point and
	\(x^{k_\ell}\to x^\ast\), the event \(B^k\) occurs infinitely often along the
	chosen sample path. Hence, $\sum_{k=1}^{\infty}
	\mathbf 1_{B^k}
	=
	\infty$. So, by \eqref{eq:prob-factorization},
	\[
	\sum_{k=1}^{\infty}
	\Pr(F^k\mid\mathcal F^{k-1})
	\ge
	p_{\mathrm{inj}}c_0
	\sum_{k=1}^{\infty}
	\mathbf 1_{B^k}
	=
	\infty .
	\]
	By L\'evy's conditional Borel-Cantelli lemma, the events \(F^k\) occur
	infinitely often almost surely for the event sequence associated with the fixed
	full-support accumulation point \(x^\ast\) and its violating ZCW
	super support \(T^\ast\).
	
	\textbf{(v) Contradiction with vanishing actual change.}
	Whenever \(F^k\) occurs, we have \(x^k\in\mathcal N\), \(\|x^k\|_0=s\), and
	\(T^k=T^\ast\). Therefore, the ZCW-aware reservoir injection mechanism generates a probe candidate
	\(r^k\) satisfying \eqref{eq:defxik_final_Tk} with $T^\ast$
	\[
	f(r^k)
	\le
	\min_{\substack{z\in C\\ I_1(z)\subseteq T^\ast}}
	f(z)
	+
	\varepsilon_T^k .
	\]
	Since \(x^k\in\mathcal N\), the uniform descent estimate in \eqref{e:unidec} gives $f(r^k)
	\le
	f(x^k)-\delta_0+\varepsilon_T^k$. The point \(r^k\) belongs to the current candidate set \(\Omega^k\). Hence,
	using the candidate-set acceptance condition in \eqref{eq:accept-no-safeguard}, we obtain
	\[
	f(x^{k+1})
	\le
	f(r^k)+\xi^k
	\le
	f(x^k)-\delta_0+\varepsilon_T^k+\xi^k .
	\]
	Since \(\xi^k\to0\) by \cite[Lemma 1]{suppMat} and
	\(\varepsilon_T^k\to0\) by assumption, for all sufficiently large \(k\) such that
	\(F^k\) occurs, $\varepsilon_T^k+\xi^k\le \delta_0/2$. Therefore, $f(x^{k+1})
	\le
	f(x^k)-\delta_0/2$. Equivalently by \eqref{e.defDeltak}, $\Delta^k
	\ge \delta_0/2$. Since the events \(F^k\) occur infinitely often almost surely on the sample paths under consideration, this implies $\limsup_{k\to\infty}\Delta^k
	\ge \delta_0/2
	>0$, which contradicts \(\Delta^k\to0\).
	
	Therefore, the contradiction hypothesis is false. Since \(x^\ast\) was arbitrary
	among full-support accumulation points on the sample path under consideration,
	every full-support accumulation point is  ZCW stationary almost
	surely.

\end{proof}

\begin{lem}[Eventual full-support stabilization under the safeguard]
	\label{lem:eventual-full-support-stabilization}
	Assume that the iterates generated by {\tt RZCW-PSS} remain in
	\(\mathcal L(x^0)\), that \(f\) is bounded below on \(\mathcal L(x^0)\), and
	that the accepted objective values are nonincreasing. Suppose that the optional
	support-identification safeguard in Step~\textbf{S1g} is used in the stronger
	form, and that the safeguarded acceptance rule in \eqref{eq:Omega-hat-safeguard} selects
	\(x^{k+1}\in\widehat{\Omega}^k\) satisfying \eqref{eq:accept-safeguard}. 

	Assume also that the starting point of the main loop satisfies
	\eqref{eq:full-support-alpha-init}, i.e.,
	\begin{equation*}
		\|x^0\|_0=s,
		\qquad
		\min_{i\in I_1(x^0)} |x_i^0|\ge \alpha.   
	\end{equation*}
	Then there exist \(K\ge0\) and \(T^\ast\subseteq[n]\), with \(|T^\ast|=s\),
	such that
	\[
	I_1(x^k)=T^\ast,
	\qquad
	\min_{i\in T^\ast}|x_i^k|\ge\alpha,
	\qquad
	\forall k\ge K.
	\]
	Consequently, every accumulation point \(\bar x\) of the generated sequence
	satisfies
	\[
	I_1(\bar x)=T^\ast,
	\qquad
	\|\bar x\|_0=s.
	\]
\end{lem}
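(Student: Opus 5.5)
The plan is an induction followed by a counting argument on the objective decrease.

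\textbf{Step 1: invariance and well-definedness.} By induction on \(k\), show that
\[
\|x^k\|_0=s,\qquad \min_{i\in I_1(x^k)}|x^k_i|\ge\alpha .
\]
The base case is \eqref{eq:full-support-alpha-init}. For the inductive step, suppose \(x^k\) satisfies both properties. Since \(x^k\in\Omega^k\) and \(I_1(x^k)=I_1(x^k)\), the point \(x^k\) satisfies every defining condition of \eqref{eq:Omega-hat-safeguard}. Hence \(x^k\in\widehat\Omega^k\), so \(\widehat\Omega^k\neq\emptyset\) and the safeguarded rule \eqref{eq:accept-safeguard} is well defined. Because the safeguard in its stronger form requires \(x^{k+1}\in\widehat\Omega^k\), the new iterate again has full support and active entries of magnitude at least \(\alpha\). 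This closes the induction.

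\textbf{Step 2: finitely many support changes.} Let \(\mathcal K:=\{k: I_1(x^{k+1})\neq I_1(x^k)\}\). For \(k\in\mathcal K\), membership \(x^{k+1}\in\widehat\Omega^k\) with a changed support forces
\[
f(x^k)-f(x^{k+1})\ge\delta_{\rm supp}.
\]
For \(k\notin\mathcal K\), monotonicity of the accepted values gives \(f(x^k)-f(x^{k+1})\ge0\). Telescoping over \(k=0,\dots,N-1\) yields
\[
f(x^0)-f(x^N)\ge\delta_{\rm supp}\,|\mathcal K\cap\{0,\dots,N-1\}| .
\]
All iterates lie in \(\mathcal L(x^0)\), where \(f\) is bounded below by some \(f_{\rm low}\). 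The left-hand side is therefore at most \(f(x^0)-f_{\rm low}<\infty\), so \(|\mathcal K|\le (f(x^0)-f_{\rm low})/\delta_{\rm supp}\) and \(\mathcal K\) is finite. Set \(K:=1+\max\mathcal K\), or \(K:=0\) if \(\mathcal K=\emptyset\), and let \(T^\ast:=I_1(x^K)\). Then \(I_1(x^k)=T^\ast\) for all \(k\ge K\), with \(|T^\ast|=s\) and \(\min_{i\in T^\ast}|x^k_i|\ge\alpha\) by Step~1.

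\textbf{Step 3: accumulation points.} Let \(x^{k_\ell}\to\bar x\). For \(j\notin T^\ast\) we have \(x^{k_\ell}_j=0\) for all large \(\ell\), so \(\bar x_j=0\). For \(i\in T^\ast\) we have \(|x^{k_\ell}_i|\ge\alpha\), which passes to the limit as \(|\bar x_i|\ge\alpha>0\). Hence \(I_1(\bar x)=T^\ast\) and \(\|\bar x\|_0=s\).

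\textbf{Main obstacle.} The only delicate point is Step~1: making sure the admissible set never empties, so that the rule \(x^{k+1}\in\widehat\Omega^k\) is always applicable. This rests on \(x^k\in\Omega^k\) always holding, as guaranteed by Step~\textbf{S1}, together with the "same support" clause of \eqref{eq:Omega-hat-safeguard}. Once that is in place, the rest is a routine telescoping and limit argument.
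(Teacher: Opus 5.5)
Your proposal is correct and follows the paper's proof essentially step for step. It uses the same induction through the same-support clause to keep $\widehat{\Omega}^k$ nonempty, the same telescoping bound on support-changing steps (stated as a direct count rather than by contradiction), and the same limit argument for accumulation points; there you verify $\bar x_j=0$ off $T^\ast$ directly instead of invoking $\bar x\in C\cap C_s$ and $|T^\ast|=s$.
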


\begin{proof}
	We first show that the safeguarded admissible set in
	\eqref{eq:Omega-hat-safeguard} is nonempty at every iteration.
	By the initialization assumption \eqref{eq:full-support-alpha-init}, $\|x^0\|_0=s$ and $\min_{i\in I_1(x^0)}|x_i^0|\ge\alpha$. Since \(x^0\in\Omega^0\), the candidate \(y=x^0\) satisfies the full-support
	and active-component lower-bound requirements in
	\eqref{eq:Omega-hat-safeguard}. Moreover, it satisfies the same-support
	alternative, because $I_1(y)=I_1(x^0)$. Therefore, \(x^0\in\widehat{\Omega}^0\).
	
	Assume inductively that the condition \eqref{eq:accept-safeguard} holds at $x^k$, i.e., $\|x^k\|_0=s$ and $\min_{i\in I_1(x^k)}|x_i^k|\ge\alpha$. Since \(x^k\in\Omega^k\), the candidate \(y=x^k\) satisfies the full-support and active-component lower-bound requirements in \eqref{eq:Omega-hat-safeguard}. It also satisfies the same-support alternative, because $I_1(y)=I_1(x^k)$. Hence, \(x^k\in\widehat{\Omega}^k\), so
	\(\widehat{\Omega}^k\neq\emptyset\), and the safeguarded acceptance rule is
	well defined. Since \(x^{k+1}\in\widehat{\Omega}^k\), the definition of
	\(\widehat{\Omega}^k\) gives the satisfaction of \eqref{eq:accept-safeguard} at $x^{k+1}$, i.e.,
	$\|x^{k+1}\|_0=s$ and $
	\min_{i\in I_1(x^{k+1})}|x_i^{k+1}|\ge\alpha$. By induction, every iterate has full support and all active components are bounded
	below by \(\alpha\).
	
	Suppose, by contradiction, that infinitely many support changes occur. Let
	\(\{k_\ell\}_{\ell\ge0}\) be the strictly increasing sequence of indices such
	that $I_1(x^{k_\ell+1})\neq I_1(x^{k_\ell})$. By the safeguard in \eqref{eq:Omega-hat-safeguard}, every accepted support-changing step satisfies
	\[
	f(x^{k_\ell})-f(x^{k_\ell+1})
	\ge
	\delta_{\rm supp}.
	\]
	Since the accepted objective values are nonincreasing, for every \(N\ge0\),
	\[
	f(x^0)-f(x^{k_N+1})
	\geq
	\sum_{\ell=0}^{N}
	\bigl(f(x^{k_\ell})-f(x^{k_\ell+1})\bigr)
	\ge
	(N+1)\delta_{\rm supp}.
	\]
	The left-hand side is bounded above because \(f\) is bounded below on
	\(\mathcal L(x^0)\), whereas the right-hand side tends to \(+\infty\). This is impossible. Hence only finitely many support changes occur. Therefore, there exist \(K\ge0\) and \(T^\ast\subseteq[n]\) such that $I_1(x^k)=T^\ast$, for all $k\ge K$. Since every iterate has full support and active components bounded below by
	\(\alpha\), we obtain $|T^\ast|=s$, $\min_{i\in T^\ast}|x_i^k|\ge\alpha$ for all $k\ge K$.
	Let \(\bar x\) be any accumulation point. Then there exists a subsequence
	\(x^{k_\ell}\to\bar x\). For all sufficiently large \(\ell\), $I_1(x^{k_\ell})=T^\ast$ and $\min_{i\in T^\ast}|x_i^{k_\ell}|\ge\alpha$. Passing to the limit yields $|\bar x_i|\ge\alpha$, for all $i\in T^\ast$. Thus, \(T^\ast\subseteq I_1(\bar x)\). Since \(\bar x\in C\cap C_s\) and
	\(|T^\ast|=s\), we obtain $I_1(\bar x)=T^\ast$ and $\|\bar x\|_0=s$. 
\end{proof}
\begin{cor}[ZCW stationarity of all accumulation points under the safeguard]
	\label{cor:all-accumulation-zcw-safeguard}
	Assume that the hypotheses of Theorem~\ref{thm:RZCW-PSS-ZCW} hold. Suppose also
	that the optional support-identification safeguard in Step~\textbf{S1g} is
	imposed and that the starting point of the main loop satisfies \eqref{eq:full-support-alpha-init}. Then every accumulation point of the sequence generated by {\tt RZCW-PSS}
	is a  ZCW stationary point almost surely.
\end{cor}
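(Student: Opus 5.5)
The plan is to combine Lemma~\ref{lem:eventual-full-support-stabilization} with Theorem~\ref{thm:RZCW-PSS-ZCW}. The safeguard shows that every accumulation point has full support. Theorem~\ref{thm:RZCW-PSS-ZCW} then gives ZCW stationarity for every full-support accumulation point.

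\textbf{Step 1: the hypotheses of Lemma~\ref{lem:eventual-full-support-stabilization} hold.}
\begin{itemize}
\item Under the safeguard, $x^k\in\widehat\Omega^k\subseteq\Omega^k$ is always admissible, so \eqref{eq:accept-safeguard} gives $f(x^{k+1})\le f(x^k)+\xi^k$.
\item With the acceptance convention of Step~\textbf{S1g} (a new point is accepted only on actual decrease, otherwise $x^k$ is kept), the accepted values are nonincreasing.
\item Hence the iterates stay in $\mathcal L(x^0)$. By Assumption~\ref{ass:bounded-sublevel} and the compactness remark after it, $f$ is bounded below there.
\item The initialization \eqref{eq:full-support-alpha-init} is assumed directly.
\end{itemize}
The lemma then gives $K$ and $T^\ast$ with $|T^\ast|=s$ and $I_1(x^k)=T^\ast$ for $k\ge K$. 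Every accumulation point $\bar x$ satisfies $\|\bar x\|_0=s$. This holds deterministically, on every sample path.

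\textbf{Step 2: apply Theorem~\ref{thm:RZCW-PSS-ZCW} to the safeguarded sequence.} I need to check that the theorem's proof still works when acceptance is over $\widehat\Omega^k$ instead of $\Omega^k$. This is the main obstacle: the contradiction argument in part~(v) uses that the injected probe $r^k$ is in the candidate set and can therefore be accepted. Under the safeguard, $r^k$ counts only if it lies in $\widehat\Omega^k$. I would handle this as follows:
\begin{itemize}
\item By Step~1 and Theorem~\ref{thm:support-identification}, along $x^{k_\ell}\to x^\ast$ the support equals $T^\ast=I_1(x^\ast)$, and active entries are at least $\alpha$.
\item If $r^k$ changes the support, its decrease $\delta_0-\varepsilon_T^k\ge\delta_0/2$ exceeds $\delta_{\rm supp}$ whenever $\delta_{\rm supp}\le\delta_0/2$.
\item If the safeguard parameters cannot be tuned this way, I would instead rely on the corollary's hypothesis that the assumptions of Theorem~\ref{thm:RZCW-PSS-ZCW} hold, including \eqref{eq:accept-no-safeguard}. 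I would read this as stipulating that the selected iterate satisfies the unsafeguarded inexactness bound relative to $\Omega^k$.
\end{itemize}
Under that reading, the descent contradiction of part~(v) goes through verbatim: $\Delta^k\to0$ by \cite[Lemma~1]{suppMat}, while the events $F^k$ occur infinitely often with $\Delta^k\ge\delta_0/2$.

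\textbf{Step 3: conclude.} Let $\Xi_1$ be the probability-one event on which Theorem~\ref{thm:RZCW-PSS-ZCW} holds, i.e., every full-support accumulation point is ZCW stationary. Fix a path in $\Xi_1$. By Step~1, every accumulation point on that path is full-support, hence ZCW stationary. Since $\Pr(\Xi_1)=1$, every accumulation point is ZCW stationary almost surely. Because Step~1 holds on every path, intersecting with $\Xi_1$ loses no probability.
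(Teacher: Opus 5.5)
Your proposal is correct and is essentially the paper's proof: Lemma~\ref{lem:eventual-full-support-stabilization} forces every accumulation point to have full support on every sample path, and Theorem~\ref{thm:RZCW-PSS-ZCW} then applies. Your fallback reading, that the corollary's hypotheses include \eqref{eq:accept-no-safeguard} relative to the unfiltered set $\Omega^k$, is exactly what the paper relies on implicitly when it says every accumulation point is ``covered by'' the theorem. Your other suggested fix, choosing $\delta_{\rm supp}\le\delta_0/2$, would not work on its own: $\delta_0$ depends on the unknown limit point, and $r^k$ need not pass the full-support and $\alpha$-lower-bound filter of $\widehat{\Omega}^k$. So falling back on that reading is the right call.
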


\begin{proof}
	By Lemma~\ref{lem:eventual-full-support-stabilization}, every accumulation point
	has full support. Therefore every accumulation point is covered by
	Theorem~\ref{thm:RZCW-PSS-ZCW}. Hence every accumulation point is 
	ZCW stationary almost surely.

\end{proof}

\begin{rem}[Role of the support-identification safeguard]
	Theorem~\ref{thm:RZCW-PSS-ZCW} establishes  ZCW stationarity for
	all full-support accumulation points. The optional safeguard in
	Step~\textbf{S1g}, together with full-support initialization, ensures that all
	accumulation points are eventually reached along full-support iterates. Therefore,
	Corollary~\ref{cor:all-accumulation-zcw-safeguard} upgrades the conclusion from
	full-support accumulation points to all accumulation points of the generated
	sequence.
\end{rem}

\subsection{Convergence Rate}\label{sec:ConRate}

While global linear convergence for cardinality-constrained problems is
generally precluded by the combinatorial nature of support selection,
we establish that {\tt RZCW-PSS} enjoys a \emph{conditional local linear
	convergence rate} once an active support has stabilized.

For general cardinality-constrained optimization problems, global convergence
rates are not available in the literature due to the nonconvex and combinatorial
nature of the feasible set. Existing analyses typically establish convergence to
stationary points without quantifying the rate. Convergence rates arise only
locally, once the active support has been identified or under additional
regularity assumptions.

Lemma~\ref{lem:eventual-full-support-stabilization} shows that, under the
support-identification safeguard in Step~\textbf{S1g} and full-support
initialization, the generated sequence eventually remains on one fixed
full support. Hence, in the safeguarded implementation, the support-stabilization
hypothesis in the local-rate theorem is no longer an independent assumption.
The only remaining local trajectory assumption is eventual containment in the
neighborhood \(\mathcal N\) where the restricted regularity assumptions hold.

\subsubsection{Theoretical Assumptions}

We analyze the local behavior of the algorithm in a neighborhood of a local
minimizer \(x^\ast\in\Omega\), where \(\Omega=C\cap C_s\), under regularity
conditions on a fixed active support
\begin{equation}\label{eq:fixsup}
	T^\ast:=I_1(x^\ast).   
\end{equation}
Define the restricted feasible set
\begin{equation}\label{eq:resfeas}
	C_{T^\ast}
	:=
	\{w\in\mathbb R^{|T^\ast|}:U_{T^\ast}w\in C\}.  
\end{equation}
For \(x\) satisfying \(I_1(x)\subseteq T^\ast\), let \(x_{T^\ast}\) denote the restriction of \(x\) to the index set \(T^\ast\). Then \(x_{T^\ast}\in C_{T^\ast}\). Equivalently, we use the lifted fixed-support feasible set
\begin{equation}\label{eq:lifted-fixed-support-set}
	\mathcal M_{T^\ast}
	:=
	\{x\in C:I_1(x)\subseteq T^\ast\}
	=
	\{U_{T^\ast}w:w\in C_{T^\ast}\}.
\end{equation}
Thus, \(C_{T^\ast}\) is the restricted feasible set in the reduced coordinates
\(w\in\mathbb R^{|T^\ast|}\), while \(\mathcal M_{T^\ast}\) is the
corresponding feasible set in the original variable space \(\mathbb R^n\).
Consequently,
\[
x\in\mathcal M_{T^\ast}
\quad\Longleftrightarrow\quad
x=U_{T^\ast}x_{T^\ast}
\quad \text{and}\quad
x_{T^\ast}\in C_{T^\ast}.
\]

\begin{ass}[Restricted Strong Convexity]
	\label{ass:RSC}
	The objective function \(f\) is twice continuously differentiable.
	There exist constants \(0<\mu\le L<\infty\) and a neighborhood
	\(\mathcal N\) of \(x^\ast\) such that
	\[
	\mu I \preceq \nabla^2_{T^\ast} f(x) \preceq L I,
	\qquad
	\forall x\in \mathcal N
	\ \text{with}\ 
	I_1(x)\subseteq T^\ast .
	\]
\end{ass}

Assumption~\ref{ass:RSC} is a local restricted regularity condition, not a global
convexity assumption on the original cardinality-constrained problem. It is
imposed only after the support has stabilized at \(T^\ast\), where the algorithm
locally behaves like a method applied to the smooth constrained problem on
\(C_{T^\ast}\). The lower Hessian bound provides local curvature, while the upper
bound gives the Lipschitz constant needed for the projected-gradient descent
estimate.

\begin{ass}[Restricted projected-gradient error bound]
	\label{ass:pg-nondeg}
	With the same constant \(L\) as in Assumption~\ref{ass:RSC}, there exist
	\(\mu_{\rm pg}>0\) and a neighborhood \(\mathcal N\) of \(x^\ast\) such that,
	for all \(x\in\mathcal N\) with \(I_1(x)\subseteq T^\ast\), the restricted
	projected-gradient mapping
	\[
	G_L^{T^\ast}(x)
	:=
	L\left[
	x_{T^\ast}
	-
	\proj_{C_{T^\ast}}
	\left(
	x_{T^\ast}
	-
	\frac1L g_{T^\ast}(x)
	\right)
	\right]
	\]
	satisfies $\|G_L^{T^\ast}(x)\|^2
	\ge
	2\mu_{\rm pg}\bigl(f(x)-f(x^\ast)\bigr)$.
\end{ass}

Assumption~\ref{ass:pg-nondeg} is the corresponding local error-bound condition
for the restricted projected-gradient mapping. It connects the computable
projected-gradient decrease on the fixed support to the objective gap
\(f(x)-f(x^\ast)\). In this sense, it is a restricted fixed-support analogue of
the projected-gradient error-bound conditions used in linear convergence
analyses of projected and proximal-gradient methods; see, e.g.,
\cite[Definition~5 and Theorems~6--7, Pages 12--13]{NecoaraNesterovGlineur2019}.
Without such an error-bound condition, or an equivalent quadratic-growth property, one may still obtain descent, but the descent need not be
proportional to the current objective gap, and a linear rate cannot be
concluded.

\begin{ass}[Local candidate richness]
	\label{ass:candidate-richness}
	Let \(T^\ast\) be the stabilized support and
	\begin{equation}\label{eq:resprogra}
		\tilde x^k
		:=
		U_{T^\ast}
		\proj_{C_{T^\ast}}
		\left(
		x^k_{T^\ast}
		-
		\frac1L g_{T^\ast}(x^k)
		\right)
	\end{equation}
	be the restricted projected-gradient trial point. We assume that, for all
	sufficiently large \(k\), the candidate set \(\Omega^k\) contains a point
	\(\bar x^k\in\Omega^k\) satisfying $f(\bar x^k)\le f(\tilde x^k)$.
\end{ass}

Assumption~\ref{ass:candidate-richness} is a local candidate-set richness
condition. Its role is analogous to the sufficient decrease requirements used in
trust-region methods, where an approximate subproblem solution is required to
produce at least a fixed fraction of the decrease obtained by a reference step,
such as the Cauchy step~\cite{ConnGouldToint2000}. Indeed, the condition
\(f(\bar x^k)\le f(\tilde x^k)\) is equivalent to $f(x^k)-f(\bar x^k)
\ge
f(x^k)-f(\tilde x^k)$, so the candidate \(\bar x^k\) provides at least the decrease of the
reference restricted projected-gradient trial point \(\tilde x^k\).

\subsubsection{Local Rate of Convergence}

In this subsection, we analyze the local convergence behavior of the proposed
{\tt RZCW-PSS} algorithm after a fixed support has been identified and
maintained. Once this occurs, the problem reduces locally to a smooth constrained
optimization problem over the restricted feasible set \(C_{T^\ast}\). We first
establish a quantitative descent estimate under inexact subproblem solutions,
showing that sufficient decrease is preserved despite numerical errors. Building
on this result and the restricted projected-gradient error bound, we then prove
a linear rate for the objective values.

The local convergence analysis relies only on
Assumption~\ref{ass:subproblem}(ii), which is standard in inexact descent methods.
Assumption~\ref{ass:subproblem}(i) is included to cover broader inexactness models
in the global convergence analysis.

\begin{prop}[Descent under inexact subproblem solutions]
	\label{prop:inexact-descent}
	Suppose that Assumptions~\ref{ass:subproblem}--~\ref{ass:candidate-richness} hold.
	Assume that the support-identification safeguard in Step~\textbf{S1g} is imposed
	with full-support initialization, and that the stabilized support given by
	Lemma~\ref{lem:eventual-full-support-stabilization} is \(T^\ast\). Suppose
	further that, for all sufficiently large \(k\), the iterates \(x^k\) and the
	restricted projected-gradient trial points \(\tilde x^k\) defined in \eqref{eq:resprogra} belong to
	the neighborhood \(\mathcal N\). Assume also that, for all sufficiently large \(k\), the candidate set
	\(\Omega^k\) contains a point \(\bar x^k\in\Omega^k\) satisfying $f(\bar x^k)\le f(\tilde x^k)$.
	Finally, suppose that the iterates satisfy \eqref{eq:accept-no-safeguard} and that $\xi^k\le \eta\delta^k$
	for some $\eta\in(0,1)$ (Assumption~\ref{ass:subproblem} (ii)). Then there exists \(k_0\in\mathbb N\) such that, for all \(k\ge k_0\), the actual decrease in \eqref{e.defDeltak} satisfies
	\[
	\Delta^k
	\ge
	\frac{1-\eta}{2L}
	\left\|G_L^{T^\ast}(x^k)\right\|^2 .
	\]
\end{prop}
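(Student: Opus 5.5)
The plan is to combine three ingredients: the inexact acceptance rule, the richness of the candidate set, and the standard projected-gradient descent lemma on the fixed support \(T^\ast\).

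\textbf{Step 1: fixed-support regime.} By Lemma~\ref{lem:eventual-full-support-stabilization}, there is \(K\) such that \(I_1(x^k)=T^\ast\) for all \(k\ge K\). Hence for \(k\ge K\) we can write \(x^k=U_{T^\ast}x^k_{T^\ast}\) with \(x^k_{T^\ast}\in C_{T^\ast}\). The trial point \(\tilde x^k\) in \eqref{eq:resprogra} lies in \(\mathcal M_{T^\ast}\subseteq\Omega\). Enlarging \(K\) if needed, we take \(k_0\ge K\) such that \(x^k,\tilde x^k\in\mathcal N\) and the richness hypothesis holds for all \(k\ge k_0\).

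\textbf{Step 2: restricted descent lemma.} For \(k\ge k_0\), set \(w=x^k_{T^\ast}\) and \(w^+=\proj_{C_{T^\ast}}(w-\tfrac1L g_{T^\ast}(x^k))\). Since \(\nabla^2_{T^\ast}f\preceq LI\) on \(\mathcal N\cap\mathcal M_{T^\ast}\) (Assumption~\ref{ass:RSC}), integrating the Hessian along the segment gives
\[
f(\tilde x^k)\le f(x^k)+g_{T^\ast}(x^k)^\top(w^+-w)+\tfrac L2\|w^+-w\|^2 .
\]
This requires the segment \([x^k,\tilde x^k]\) to stay in \(\mathcal N\). That holds because \(\mathcal M_{T^\ast}\) is convex, and we may shrink \(\mathcal N\) to a convex ball, so both endpoints lying in \(\mathcal N\) suffice.

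The projection characterization gives \((w-\tfrac1L g_{T^\ast}-w^+)^\top(w-w^+)\le0\), i.e. \(g_{T^\ast}^\top(w^+-w)\le -L\|w^+-w\|^2\). Substituting and using \(w-w^+=G_L^{T^\ast}(x^k)/L\), we obtain
\[
f(x^k)-f(\tilde x^k)\ge \tfrac L2\|w^+-w\|^2=\tfrac1{2L}\|G_L^{T^\ast}(x^k)\|^2 .
\]

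\textbf{Step 3: transfer to the actual decrease.} By the richness assumption, \(\bar x^k\in\Omega^k\) with \(f(\bar x^k)\le f(\tilde x^k)\). Therefore
\[
\delta^k=f(x^k)-\min_{z\in\Omega^k}f(z)\ge f(x^k)-f(\bar x^k)\ge\tfrac1{2L}\|G_L^{T^\ast}(x^k)\|^2 .
\]
From \eqref{eq:actdec} and Assumption~\ref{ass:subproblem}(ii),
\[
\Delta^k\ge\delta^k-\xi^k\ge(1-\eta)\delta^k ,
\]
which yields the claim.

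One point needs checking: the safeguarded rule \eqref{eq:accept-safeguard} minimizes over \(\widehat\Omega^k\), while \eqref{eq:accept-no-safeguard} is over \(\Omega^k\). The statement explicitly assumes \eqref{eq:accept-no-safeguard}, so the argument goes through. Alternatively, \(\bar x^k\) can be placed in \(\widehat\Omega^k\) when it is same-support with entries at least \(\alpha\).

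\textbf{Main obstacle.} The main difficulty is Step 2's locality: the descent inequality uses Hessian bounds valid only on \(\mathcal N\cap\mathcal M_{T^\ast}\). The argument therefore needs the whole segment, not just its endpoints, to lie there. I would handle this by replacing \(\mathcal N\) with a convex ball inside it, so that convexity of both sets gives the segment inclusion.
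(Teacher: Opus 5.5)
Your proof is correct and follows the paper's argument essentially step for step. Both use eventual support stabilization from Lemma~\ref{lem:eventual-full-support-stabilization}, then the descent lemma for $f(U_{T^\ast}\cdot)$ combined with the projection inequality tested at $w=x^k_{T^\ast}$ to get $f(x^k)-f(\tilde x^k)\ge \frac{1}{2L}\|G_L^{T^\ast}(x^k)\|^2$, and finally candidate richness with $\Delta^k\ge\delta^k-\xi^k\ge(1-\eta)\delta^k$.

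Your concern about the segment $[x^k,\tilde x^k]$ is legitimate; the paper passes over it by invoking the Hessian bound ``in the neighborhood under consideration.'' However, shrinking $\mathcal N$ to a ball after the fact does not keep $x^k,\tilde x^k$ inside the smaller ball. The clean fix is to take $\mathcal N$ convex (for instance a ball) from the outset, both in Assumption~\ref{ass:RSC} and in the eventual-containment hypothesis.
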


\begin{proof}
	
	By Lemma~\ref{lem:eventual-full-support-stabilization}, eventual containment in
	\(\mathcal N\) and there exists \(k_0\) such
	that all required conditions hold for every \(k\ge k_0\). In particular, after
	increasing \(k_0\) if necessary, the support is fixed at \(T^\ast\), the iterates
	belong to \(\mathcal N\), and the candidate-richness condition (Assumption~\ref{ass:candidate-richness}) holds. Hence, for all \(k\ge k_0\), the iterates remain in
	\(\mathcal M_{T^\ast}\) defined in \eqref{eq:lifted-fixed-support-set}. Let $w^k:=x^k_{T^\ast}$ and $g^k:=g_{T^\ast}(x^k)$. Consider the restricted projected-gradient step
	\[
	\tilde w^k
	:=
	\proj_{C_{T^\ast}}
	\left(
	w^k-\frac1L g^k
	\right),
	\qquad
	\tilde x^k:=U_{T^\ast}\tilde w^k.
	\]
	Then \(\tilde x^k\in\mathcal M_{T^\ast}\), where $\mathcal M_{T^\ast}$ is from \eqref{eq:lifted-fixed-support-set}. By the candidate-richness assumption,
	there exists \(\bar x^k\in\Omega^k\) such that $f(\bar x^k)\le f(\tilde x^k)$. Since the support is fixed at \(T^\ast\), the local analysis is carried out on
	the restricted feasible set \(C_{T^\ast}\), which comes from \eqref{eq:resfeas}. Define the restricted function
	\[
	\varphi(w):=f(U_{T^\ast}w),
	\qquad w\in C_{T^\ast}.
	\]
	Since $\nabla^2\varphi(w)
	=
	U_{T^\ast}^{\top}\nabla^2 f(U_{T^\ast}w)U_{T^\ast}
	=
	\nabla^2_{T^\ast}f(U_{T^\ast}w)$,  Assumption~\ref{ass:RSC} yields
	\[
	\mu I \preceq \nabla^2\varphi(w) \preceq LI,
	\]
	in the neighborhood under consideration. Hence \(\varphi\) is
	\(\mu\)-strongly convex and its gradient is \(L\)-Lipschitz. So, the standard descent lemma (cf. \cite[Lemma~4.22]{beck2014introduction}) applied
	to \(\varphi\) gives
	\[
	f(\tilde x^k)
	=
	\varphi(\tilde w^k)
	\le
	\varphi(w^k)
	+
	\langle \nabla \varphi(w^k),\tilde w^k-w^k\rangle
	+
	\frac{L}{2}\|\tilde w^k-w^k\|^2.
	\]
	Since $\nabla\varphi(w^k)=g_{T^\ast}(x^k)=g^k$, this becomes
	\[
	f(\tilde x^k)
	\le
	f(x^k)
	+
	\langle g^k,\tilde w^k-w^k\rangle
	+
	\frac{L}{2}\|\tilde w^k-w^k\|^2.
	\]
	
	Moreover, \(\tilde w^k\) is the projection of
	\(w^k-\frac1L g^k\) onto the closed convex set \(C_{T^\ast}\). So, the
	variational characterization of the projection, \cite[Theorem~3.16]{Bauschke2017}, yields
	\[
	\left\langle
	w^k-\frac1L g^k-\tilde w^k,\,
	w-\tilde w^k
	\right\rangle
	\le 0,
	\qquad
	\forall w\in C_{T^\ast}.
	\]
	Taking \(w=w^k\) yields $\langle g^k,\tilde w^k-w^k\rangle
	\le
	-L\|\tilde w^k-w^k\|^2$. Therefore,
	\[
	f(\tilde x^k)
	\le
	f(x^k)
	-
	\frac{L}{2}\|\tilde w^k-w^k\|^2.
	\]
	Since $G_L^{T^\ast}(x^k)
	=
	L(w^k-\tilde w^k)$, we obtain $f(x^k)-f(\tilde x^k)
	\ge
	\frac{1}{2L}
	\left\|G_L^{T^\ast}(x^k)\right\|^2$.By Assumption~\ref{ass:candidate-richness}, there exists
	\(\bar x^k\in\Omega^k\) such that \(f(\bar x^k)\le f(\tilde x^k)\). Hence,
	\[
	\min_{z\in\Omega^k} f(z)
	\le
	f(\bar x^k)
	\le
	f(\tilde x^k).
	\]
	Therefore, considering \eqref{e.defdeltak},
	\[
	\delta^k
	=
	f(x^k)-\min_{z\in\Omega^k}f(z)
	\ge
	f(x^k)-f(\tilde x^k)
	\ge
	\frac{1}{2L}
	\left\|G_L^{T^\ast}(x^k)\right\|^2.
	\]
	By \eqref{eq:actdec}, $\Delta^k\ge \delta^k-\xi^k$. Using \(\xi^k\le\eta\delta^k\) (Assumption~\ref{ass:subproblem} (ii)), we obtain
	\[
	\Delta^k
	\ge
	(1-\eta)\delta^k
	\ge
	\frac{1-\eta}{2L}
	\left\|G_L^{T^\ast}(x^k)\right\|^2.
	\]
	This completes the proof.
\end{proof}

Assumptions~\ref{ass:RSC} and~\ref{ass:pg-nondeg} are local regularity
assumptions on the restricted problem associated with the limiting support. They
do not, by themselves, imply convergence of the iterates into the neighborhood
\(\mathcal N\). However, when the support-identification safeguard in
Step~\textbf{S1g} is imposed together with full-support initialization,
Lemma~\ref{lem:eventual-full-support-stabilization} guarantees finite
stabilization on a full support. Hence, in the safeguarded implementation, the
only remaining trajectory condition for the local rate result is eventual
containment in \(\mathcal N\).

\begin{thm}[Conditional local linear convergence under the safeguard]
	\label{thm:local-linear}
	Let \(x^\ast\in\Omega\) be a local minimizer of \(f\) on the restricted feasible
	set \eqref{eq:resfeas} associated with \(T^\ast\) in \eqref{eq:fixsup}.
	Suppose that Assumptions~\ref{ass:RSC} and~\ref{ass:pg-nondeg} hold in a
	neighborhood \(\mathcal N\) of \(x^\ast\), chosen sufficiently small so that
	\[
	f(x)\ge f(x^\ast),
	\qquad
	\forall x\in \mathcal N\cap\mathcal M_{T^\ast}.
	\]
	Assume that the support-identification safeguard in Step~\textbf{S1g} is imposed
	and that the starting point of the main loop satisfies $\|x^0\|_0=s$ and $\min_{i\in I_1(x^0)}|x_i^0|\ge \alpha$ (see, \eqref{eq:full-support-alpha-init}). Suppose further that the stabilized support given by
	Lemma~\ref{lem:eventual-full-support-stabilization} is \(T^\ast\), and that the
	iterates eventually enter the neighborhood \(\mathcal N\); that is, there exists
	\(k_0\) such that $x^k\in\mathcal N$, for all $k\ge k_0$. Assume also that the iterates satisfy the inexactness condition
	\eqref{eq:accept-no-safeguard} with an error sequence \(\{\xi^k\}_{k\geq 0}\) satisfying
	Assumption~\ref{ass:subproblem}; in particular,
	\[
	\xi^k\le \eta\delta^k
	\quad
	\text{for some } \eta\in(0,1).
	\]
	Moreover, assume that the local candidate-richness condition in
	Assumption~\ref{ass:candidate-richness} holds. Then there exists a constant \(\rho_{\rm lin}\in(0,1)\) such that
	\[
	f(x^{k+1})-f(x^\ast)
	\le
	(1-\rho_{\rm lin})\bigl(f(x^k)-f(x^\ast)\bigr),
	\qquad
	\forall k\ge k_0,
	\]
	where one may take
	\begin{equation}\label{eq:rho-lin}
		\rho_{\rm lin}
		:=
		\min\left\{
		\frac12,\,
		\frac{(1-\eta)\mu_{\rm pg}}{L}
		\right\}.
	\end{equation}
\end{thm}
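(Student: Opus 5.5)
The plan is to combine the per-iteration descent estimate of Proposition~\ref{prop:inexact-descent} with the restricted projected-gradient error bound of Assumption~\ref{ass:pg-nondeg}, and then rearrange into a contraction of the objective gap. First, I would fix \(k_0\) large enough that several things hold simultaneously for all \(k\ge k_0\). Lemma~\ref{lem:eventual-full-support-stabilization} should give \(I_1(x^k)=T^\ast\), so that \(x^k\in\mathcal M_{T^\ast}\). By hypothesis, \(x^k\in\mathcal N\). Assumption~\ref{ass:candidate-richness} should hold. Finally, the conclusion of Proposition~\ref{prop:inexact-descent} should be available,
\[
f(x^k)-f(x^{k+1})\ge \frac{1-\eta}{2L}\bigl\|G_L^{T^\ast}(x^k)\bigr\|^2 .
\]
I will assume the trial points \(\tilde x^k\) lie in \(\mathcal N\), as Proposition~\ref{prop:inexact-descent} requires; shrinking \(\mathcal N\) and using nonexpansiveness of \(\proj_{C_{T^\ast}}\) should make this automatic once \(x^k\) is close to \(x^\ast\). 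Since \(x^k\in\mathcal N\cap\mathcal M_{T^\ast}\), local minimality gives \(e_k:=f(x^k)-f(x^\ast)\ge0\), which makes the gap a legitimate Lyapunov quantity.

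Second, I would insert Assumption~\ref{ass:pg-nondeg}, namely \(\|G_L^{T^\ast}(x^k)\|^2\ge 2\mu_{\rm pg}e_k\), into the descent estimate. This gives \(e_k-e_{k+1}\ge \frac{(1-\eta)\mu_{\rm pg}}{L}e_k\), and hence
\[
e_{k+1}\le \Bigl(1-\frac{(1-\eta)\mu_{\rm pg}}{L}\Bigr)e_k .
\]

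Third, I would pass to \(\rho_{\rm lin}\) from \eqref{eq:rho-lin}. Because \(\rho_{\rm lin}\le (1-\eta)\mu_{\rm pg}/L\) and \(e_k\ge0\), the factor \(1-(1-\eta)\mu_{\rm pg}/L\) may be replaced by the larger factor \(1-\rho_{\rm lin}\). The cap \(\tfrac12\) guarantees \(\rho_{\rm lin}\in(0,1)\) even if \((1-\eta)\mu_{\rm pg}/L\ge1\). That case can actually be ruled out: combining the error bound with the descent inequality \(e_k-e_{k+1}\le e_k\), which uses \(e_{k+1}\ge0\) since \(x^{k+1}\in\mathcal N\), forces \((1-\eta)\mu_{\rm pg}/L\le1\) whenever \(e_k>0\). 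Even so, the minimum with \(\tfrac12\) keeps the statement clean. Positivity follows from \(\eta<1\), \(\mu_{\rm pg}>0\) and \(L<\infty\).

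The main obstacle is bookkeeping rather than analysis. The hypotheses of Proposition~\ref{prop:inexact-descent} must genuinely hold from one common index \(k_0\) onward, including \(x^{k+1}\in\mathcal N\) (needed for \(e_{k+1}\ge0\)) and \(\tilde x^k\in\mathcal N\). I would handle this by taking \(k_0\) to be the maximum of the stabilization index, the containment index and the richness index. If needed, I would also restrict \(\mathcal N\) to a ball on which the restricted projected-gradient map is continuous, so that \(\tilde x^k\) stays inside it.
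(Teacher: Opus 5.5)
Your proposal is correct and follows the paper's proof essentially step for step. You stabilize the support via Lemma~\ref{lem:eventual-full-support-stabilization}, use local minimality on $\mathcal N\cap\mathcal M_{T^\ast}$ to get $e_k\ge0$, combine Proposition~\ref{prop:inexact-descent} with Assumption~\ref{ass:pg-nondeg}, and then weaken the contraction factor to $1-\rho_{\rm lin}$ using $e_k\ge0$. Your caveat that $\tilde x^k\in\mathcal N$ is needed is well taken, since the paper invokes Proposition~\ref{prop:inexact-descent} without checking it, but shrinking $\mathcal N$ does not make it automatic: the containment hypothesis $x^k\in\mathcal N$ is only given for the original $\mathcal N$, so keep $\tilde x^k\in\mathcal N$ as an explicit assumption.
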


\begin{proof}
	By Lemma~\ref{lem:eventual-full-support-stabilization}, the support is fixed at
	\(T^\ast\) for all sufficiently large \(k\). Since the iterates eventually enter
	\(\mathcal N\), we may increase \(k_0\), if necessary, so that for all
	\(k\ge k_0\),
	\[ 
	x^k\in \mathcal N\cap\mathcal M_{T^\ast}.
	\]
	By the choice of \(\mathcal N\), we also have $f(x^k)-f(x^\ast)\ge0$ for all $k\ge k_0$. By Proposition~\ref{prop:inexact-descent}, for all \(k\ge k_0\),
	\[
	\Delta^k
	\ge
	\frac{1-\eta}{2L}
	\left\|G_L^{T^\ast}(x^k)\right\|^2 .
	\]
	By the definition of the actual accepted decrease in \eqref{e.defDeltak}, $\Delta^k=f(x^k)-f(x^{k+1})$. Therefore,
	\[
	f(x^{k+1})
	=
	f(x^k)-\Delta^k
	\le
	f(x^k)
	-
	\frac{1-\eta}{2L}
	\left\|G_L^{T^\ast}(x^k)\right\|^2.
	\]
	By Assumption~\ref{ass:pg-nondeg},
	\[
	\left\|G_L^{T^\ast}(x^k)\right\|^2
	\ge
	2\mu_{\rm pg}\bigl(f(x^k)-f(x^\ast)\bigr).
	\]
	Combining the last two inequalities gives
	\[
	f(x^{k+1})
	\le
	f(x^k)
	-
	\frac{(1-\eta)\mu_{\rm pg}}{L}
	\bigl(f(x^k)-f(x^\ast)\bigr).
	\]
	Using \eqref{eq:rho-lin} and since \(\eta\in(0,1)\), \(\mu_{\rm pg}>0\), and \(L>0\), we obtain
	\[
	0<\rho_{\rm lin}<1
	\qquad\text{and}\qquad
	\rho_{\rm lin}\le \frac{(1-\eta)\mu_{\rm pg}}{L}.
	\]
	Since \(f(x^k)-f(x^\ast)\ge0\), it follows that $f(x^{k+1})
	\le
	f(x^k)
	-
	\rho_{\rm lin}\bigl(f(x^k)-f(x^\ast)\bigr)$. Subtracting \(f(x^\ast)\) from both sides yields
	\[
	f(x^{k+1})-f(x^\ast)
	\le
	(1-\rho_{\rm lin})\bigl(f(x^k)-f(x^\ast)\bigr).
	\]
	This proves the claimed local linear rate.

\end{proof}

\begin{rem}[Two-phase convergence behavior]
	Theorem~\ref{thm:local-linear} describes the local refinement phase of
	{\tt RZCW-PSS} conditional on stabilization of the active support.
	The ZCW-aware reservoir injection mechanism (Step~{\bf S1e}) is used in the
	global phase to generate ZCW super supports and the associated restricted
	subproblem candidates. Once the iterates remain on a fixed support,
	the algorithm transitions to a deterministic restricted refinement phase in which
	projected subspace optimization yields linear convergence under the assumptions
	above.
\end{rem}

We note that the practical implementation includes additional components
(e.g., {\tt BFS} ranking and restricted {\tt FISTA} solves) that preserve
feasibility and sufficient decrease. These modifications do not affect the local
convergence analysis once the support has stabilized.

\section{Numerical Experiments}\label{sec:num}

This section evaluates the practical performance of the proposed {\tt RZCW-PSS}
algorithm on a diverse collection of randomly generated and data-driven,
cardinality-constrained optimization problems.
We compare four variants of {\tt RZCW-PSS} with established methods, namely
{\tt PSS}, {\tt BFS}, and {\tt ZCWS}, with respect to efficiency, robustness,
objective quality, and structural support similarity.
The experiments are designed to assess whether the methods can obtain
high-quality feasible sparse solutions within a prescribed computational budget,
rather than to report an exact numerical certificate of  ZCW
stationarity.
A total of 50 test problems spanning a wide range of dimensions and
sparsity regimes are considered; detailed descriptions of the benchmark
generation procedure and problem classes are provided in
Section~9 of \cite{suppMat}.

\subsection{Efficiency and Robustness}

The two efficiency measures considered  are {\tt nf2g}={\tt nf}+2{\tt ng} (where \texttt{nf} is the total number of function evaluations and \texttt{ng} is the gradient evaluations) and the computational time in seconds (\texttt{sec}). A common motivation for adopting {\tt nf2g} as a cost measure is its ability to reflect the disproportionate cost of gradient evaluations relative to function calls. Since, in many practical settings, computing gradients dominates the overall expense, emphasizing their contribution leads to a realistic assessment of computational effort. The performance profile (\cite[Section 10]{suppMat})  indicates which solver is the {\bf most efficient}, in terms of lowest cost according to these measures, as well as the {\bf most robust}, measured by the largest number of problems successfully solved.

\subsection{Stopping Criterion, Relaxed Support Recovery, and Solution Quality}
\label{sec:stop-quality}

The numerical experiments are designed to assess computational efficiency,
robustness, objective quality, and the structural similarity of the sparse
supports recovered by the different methods. Although the convergence analysis in
Section~\ref{sec:conv} is stated in terms of  zero-coordinatewise
stationarity, exact numerical certification of this condition is generally
expensive. Indeed,  ZCW stationarity is defined through
restricted sparse subproblems over ZCW super supports. Therefore, we do not
use a cheap gradient-score test as a numerical stationarity residual.

\bfi{Why exact ZCW certification is not used as a numerical stopping test, and what we do instead.}
The numerical experiments do not attempt to certify Beck-Hallak ZCW
stationarity at every evaluated point. Exact certification would require
checking whether the solution of any restricted sparse subproblem over the relevant ZCW
super supports can improve the current point. In other words, for each generated
or admissible ZCW super support \(T\), one would need to solve, or accurately
approximate \eqref{eq:ressubpro}, namely,
\[
\min\{f(y):y\in C,\ I_1(y)\subseteq T\},
\]
and compare this value with \(f(x)\). Performing this test at every evaluated
point would be computationally expensive and would obscure the practical
comparison of the solvers. 

The algorithm may still use inexpensive first-order ranking information
internally to guide coordinate selection and support-exchange candidates. This
ranking is useful for generating trial points, but it is not used as a numerical
certificate of ZCW stationarity. The absence of an improving candidate generated
by this ranking rule should not be interpreted as proof that all ZCW restricted
subproblems are non-improving. For this reason, the reported numerical evaluation is based on computational
cost, final objective value, and relaxed support recovery. These quantities
directly measure the practical efficiency of the method, the quality of the
solutions obtained, and the structural similarity of the recovered sparse
supports.

\bfi{Objective-quality stopping criterion.}
For the performance profiles, a solver is declared successful on a problem if it
attains a prescribed objective-quality tolerance before reaching the maximum
allowed computational budget. The computational budgets are measured by
\({\tt nf2g}\) and \({\tt sec}\).
The objective-quality measure is the convergence ratio
\[
q_{\mathrm{sol}}
:=
\frac{f_s - f_{\mathrm{opt}}}{f_0 - f_{\mathrm{opt}}},
\]
where \(f_s\) denotes the best function value obtained by solver \(s\), \(f_0\)
is the objective value at the initial point, and \(f_{\mathrm{opt}}\) is the
best-known objective value for the corresponding problem instance. In the
numerical experiments, a run is counted as successful if
\[
q_{\mathrm{sol}}\leq \varepsilon,
\qquad
\varepsilon:=10^{-4},
\]
before the maximum allowed \({\tt nf2g}\) or \({\tt sec}\) is reached. Lower
values of \(q_{\mathrm{sol}}\) indicate better objective quality.

\bfi{Relaxed support recovery.}
Since cardinality-constrained problems are nonconvex, exact support recovery is
not necessarily aligned with objective quality. Different supports may correspond
to distinct local minima, and a solver may obtain a lower objective value using a
support that is not identical to the support found by another method. Therefore,
we use a relaxed support recovery measure to quantify structural similarity
without requiring exact equality of supports.

For a problem instance \(p\) and solver \(s\), let \(x^{(p,s)}\) denote the
solution returned by solver \(s\), and define its support by
\[
\mathcal S^{(p,s)}
:=
\operatorname{supp}\bigl(x^{(p,s)}\bigr).
\]
Given a reference solution \(x^{(p,\mathrm{ref})}\), with support
\(\mathcal S^{(p,\mathrm{ref})}\), the relaxed support recovery rate of solver
\(s\) at threshold \(\theta\in(0,1)\) is defined by
\[
\mathrm{RSR}_{\theta}(s)
:=
\frac{1}{P}
\sum_{p=1}^{P}
\mathbf{1}
\left\{
\frac{
	\left|
	\mathcal S^{(p,s)}
	\cap
	\mathcal S^{(p,\mathrm{ref})}
	\right|
}{
	\left|
	\mathcal S^{(p,\mathrm{ref})}
	\right|
}
\geq \theta
\right\},
\]
where \(P\) is the number of test problems and \(\mathbf{1}\{\cdot\}\) denotes
the indicator function. Unless stated otherwise, we use \(\theta=0.9\).

The relaxed support recovery rate is not an optimality certificate. Rather, it
measures how similar the support identified by a solver is to a reference
support. This is useful for distinguishing two effects: whether a method recovers
structurally similar sparse patterns, and whether it also improves the objective
value. In our experiments, the most informative comparison is therefore the joint
behavior of relaxed support recovery and final objective value. A method that
obtains a lower objective value with a different but substantially overlapping
support is regarded as having found a better sparse solution, even if it does not
exactly reproduce the reference support.

Consequently, the figures in this section report final support size, matched
objective values, and relaxed support recovery rates. We do not report a
stationarity residual in the numerical evaluation, because exact 
ZCW certification would require solving restricted sparse subproblems and is
not computationally practical for the reported experiments.

\subsection{Numerical Comparison}

We begin by comparing four variants of the proposed {\tt RZCW-PSS} algorithm,
which differ only in the orthonormalization procedure used to construct the
randomized subspace bases (see \cite[Section 1]{suppMat}).
Specifically, we consider {\tt RZCW-PSS-orth}, {\tt RZCW-PSS-mgs},
{\tt RZCW-PSS-qr}, and {\tt RZCW-PSS-svd}, corresponding to the use of the built-in
{\tt orth} operator, the modified Gram-Schmidt method, QR factorization, and singular value
decomposition, respectively.

From Figure~11 of \cite{suppMat}, {\tt RZCW-PSS-qr} consistently achieves the best
performance among all variants, exhibiting both the lowest computational cost
(in terms of {\tt nf2g} and CPU time {\tt sec}) and the highest number of solved
instances under the objective-quality criterion
\(q_{\mathrm{sol}}\le \varepsilon=10^{-4}\). Based on this observation, we select
{\tt RZCW-PSS-qr} as the representative implementation for all subsequent
comparisons.

We then compare {\tt RZCW-PSS-qr} with {\tt ZCWS} to evaluate efficiency,
robustness, and objective quality. As shown in Figure~\ref{f.f1},
{\tt RZCW-PSS-qr} is both significantly more efficient and more robust, requiring fewer
evaluations and less computational time while solving a larger fraction of
instances according to the objective-quality criterion. This improvement can be
attributed to the combination of projected {\tt p-usequence} initialization and
randomized subspace exploration, which enables the method to escape poor sparse
local structures and identify lower-objective feasible points.

\begin{figure}[http!]
	\centering
	\begin{tabular}{cc}
		\includegraphics[width=0.48\textwidth,height=0.42\textwidth]{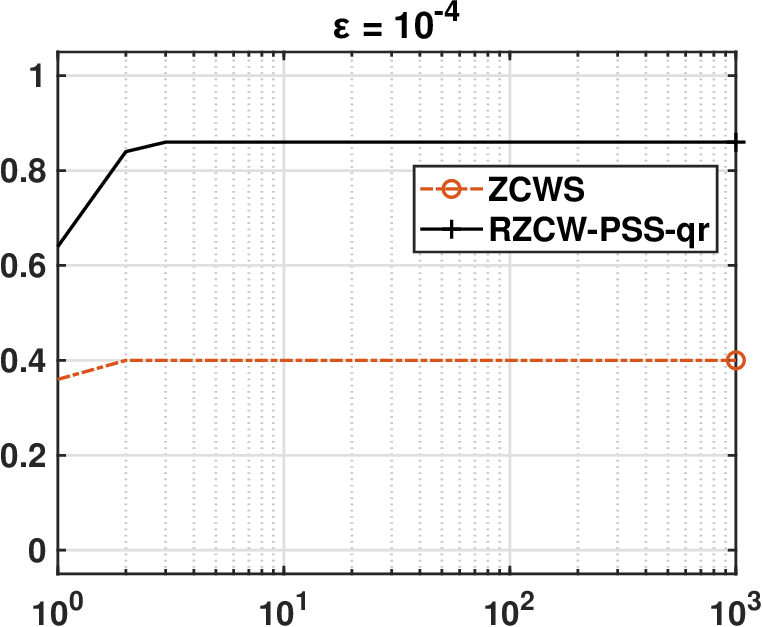}
		&
		\includegraphics[width=0.48\textwidth,height=0.42\textwidth]{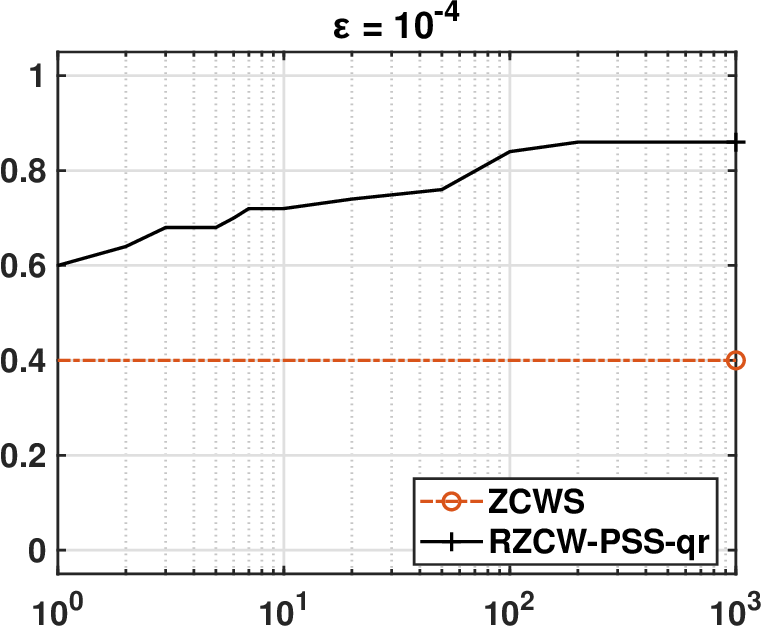}
	\end{tabular}
	\caption{Performance profiles $\{\rho_{s_i}(\tau)\}_{i=1}^{2}$ of the solvers
		$\{s_i\}_{i=1}^{2}=\{${\tt RZCW-PSS-qr}, {\tt ZCWS}$\}$ in terms of {\tt nf2g}
		(left) and {\tt sec} (right), using the objective-quality criterion
		\(q_{\mathrm{sol}}\le \varepsilon=10^{-4}\).}
	\label{f.f1}
\end{figure}

A more detailed analysis of solution quality is provided in
Figure~\ref{f.fsupport}, where we compare relaxed support recovery and objective
value reduction for {\tt ZCWS} and {\tt RZCW-PSS-qr}. The purpose of this
comparison is not to certify optimality or stationarity, but to assess whether
the sparse supports identified by different solvers are structurally related and
whether this support similarity is accompanied by improved objective values.
This distinction is important in nonconvex cardinality-constrained optimization,
where exact support recovery is often too restrictive and may not correlate with
solution quality.

The results show that {\tt RZCW-PSS-qr} attains substantially lower objective
values while maintaining meaningful relaxed support overlap with the reference
solutions. Thus, the method does not merely reproduce the same supports as
{\tt ZCWS}; rather, it often identifies different but structurally related sparse
supports lying in more favorable regions of the objective landscape.
Consequently, the joint use of objective value reduction and relaxed support
recovery provides a more informative assessment than exact support matching
alone. These results support the conclusion that, for nonconvex sparse
optimization, high-quality solutions should be evaluated by both their objective
values and their structural support similarity, rather than by exact
combinatorial recovery.

\begin{figure}[http!]
	\centering
	\begin{tabular}{ccc}
		\includegraphics[width=0.32\textwidth,height=0.26\textwidth]{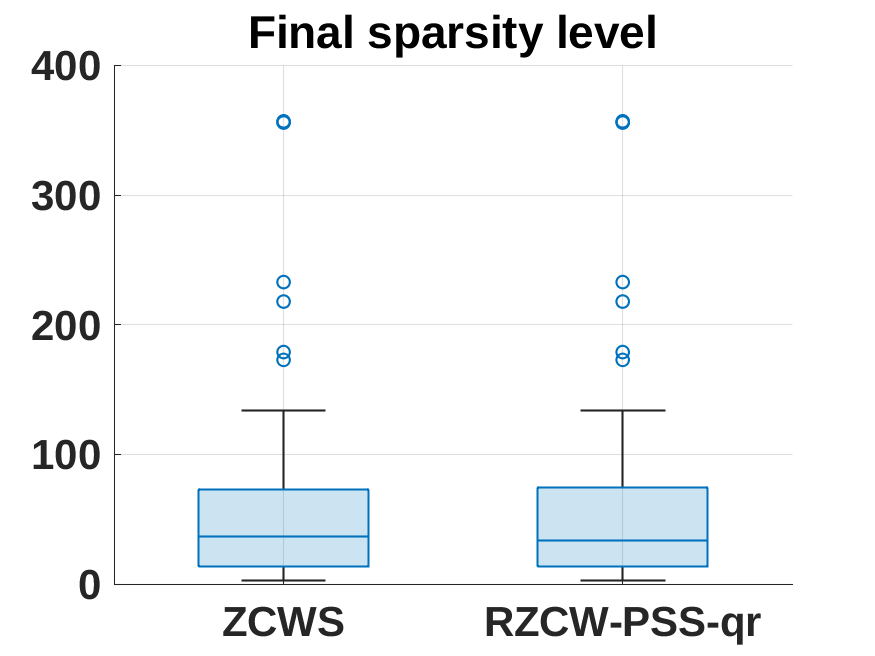}
		&
		\includegraphics[width=0.32\textwidth,height=0.26\textwidth]{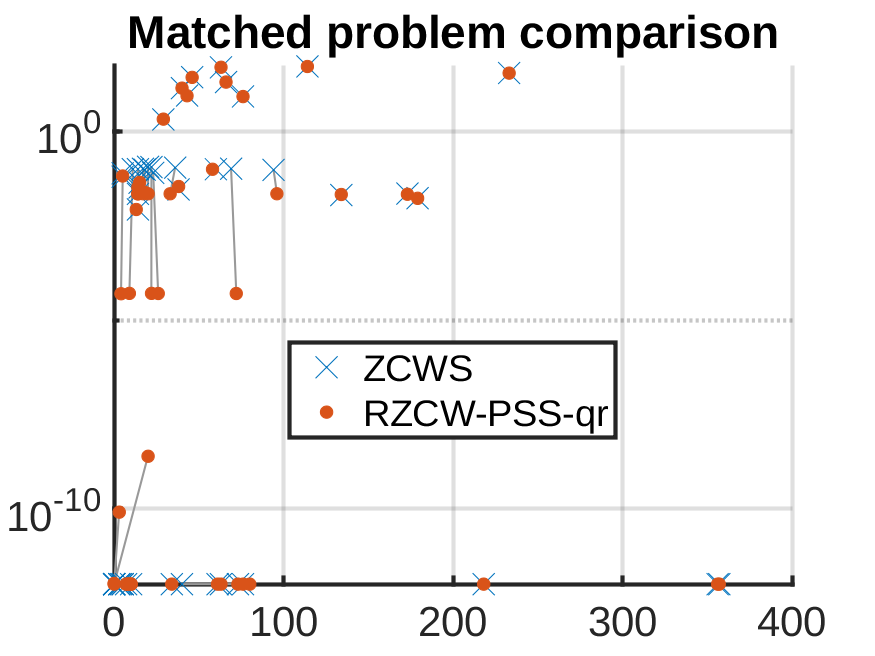}
		&
		\includegraphics[width=0.32\textwidth,height=0.26\textwidth]{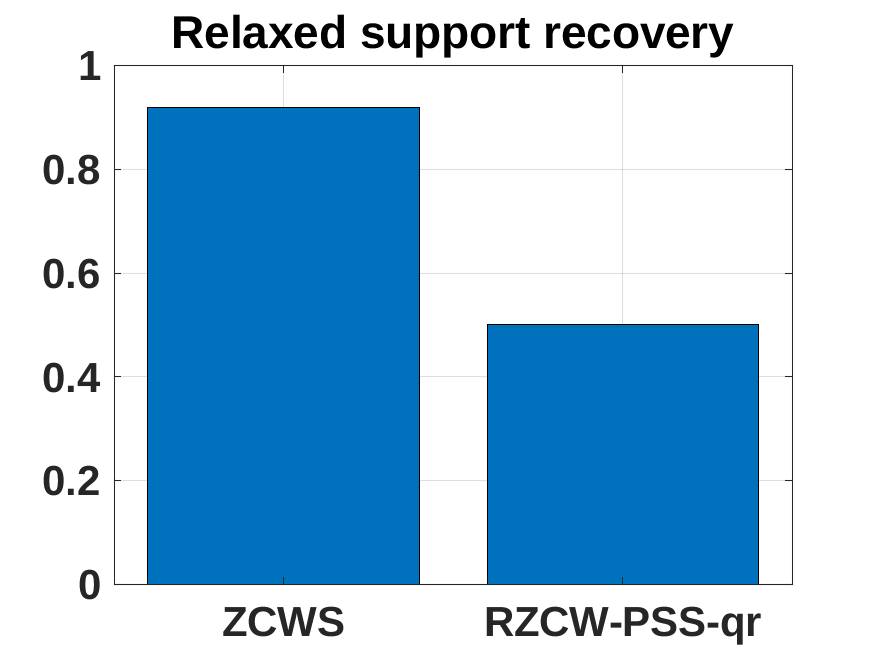}
	\end{tabular}
	\caption{Comparison of {\tt ZCWS} and {\tt RZCW-PSS-qr} under cardinality
		constraints.
		(Left) Final support size achieved by each method.
		(Middle) Matched-problem comparison of the final objective value versus
		support size, where each connected pair corresponds to the same problem
		instance; {\tt RZCW-PSS-qr} consistently attains lower objective values, often
		with different but structurally related supports.
		(Right) Relaxed support recovery rate \(\mathrm{RSR}_{\theta}\), measured with
		respect to the reference support and the threshold \(\theta=0.9\).}
	\label{f.fsupport}
\end{figure}

For completeness, additional comparisons with {\tt PSS} and {\tt BFS} are
reported in \cite[Sections~13~and~14]{suppMat}, respectively, where
{\tt RZCW-PSS-qr} demonstrates similar gains in both efficiency and robustness
under the same objective-quality criterion.

The superior performance of {\tt RZCW-PSS-qr} among the tested variants is mainly
due to the numerical stability of QR factorization, which produces well-conditioned
subspace bases in finite precision arithmetic. While alternative orthonormalization
methods generate equivalent subspaces in exact arithmetic, QR leads to more
reliable behavior in practice.

To further interpret the observed convergence behavior, we note that the plateau
visible in some runs corresponds to phases where coordinatewise and swap-based
moves fail to produce sufficient improvement. During these phases, progress
depends on the ZCW-aware reservoir injection mechanism (Step~\textbf{S1e}),
which explores directions involving inactive coordinates and alternative sparse
supports. Once a useful direction is detected, the algorithm resumes descent,
resulting in the observed improvement after the plateau.

\section{Summary and Conclusion}

We proposed the Reservoir ZCW Projected Subspace Search ({\tt RZCW-PSS})
algorithm for cardinality-constrained optimization, which strengthens
classical coordinatewise and swap-based methods through randomized
low-dimensional sparse subspace exploration. A dynamically maintained reservoir
of feasible points, initialized by symmetry-aware uniform sampling and enriched
through a ZCW-aware injection mechanism, enables the discovery of richer descent
directions while preserving both convex and sparsity feasibility.

We established that, under mild smoothness assumptions, every full-support
accumulation point of the iterates generated by {\tt RZCW-PSS} is a 
zero-coordinatewise stationary point almost surely. This result strengthens
existing guarantees for coordinatewise sparse methods, which typically ensure
only partial coordinatewise or \(L\)-stationarity, and provides a theoretical
basis for the randomized support-exchange mechanism used in the algorithm.

The numerical experiments were designed to assess efficiency, robustness,
objective quality and structural similarity of the recovered sparse supports.
The results show that {\tt RZCW-PSS} consistently attains lower objective values
than competing methods while maintaining meaningful relaxed support overlap with
reference solutions. This confirms that, in nonconvex cardinality-constrained
optimization, high-quality sparse solutions are not necessarily characterized by
exact support recovery. Rather, practically useful solutions may have different
but structurally related supports and significantly better objective values.

The experiments also indicate that randomized sparse subspace exploration and
reservoir-based injection help the method escape poor coordinatewise or
swap-based local structures. Thus, the practical advantage of {\tt RZCW-PSS}
comes from its ability to combine feasible sparse exploration with objective
value improvement, rather than from enforcing exact support recovery or relying
on a simple stationarity residual.

Future work will investigate extensions to stochastic and structured sparsity
settings, adaptive strategies for randomized subspace construction, and more
efficient approximate solvers for restricted sparse subproblems arising in exact
ZCW certification.
\end{sloppypar}

{\bf Supplementary Information}  The online version contains supplementary material available in \cite{suppMat}.

\end{document}